\documentclass[12pt]{amsart}
\usepackage{pgf,tikz,pgfplots} 
\pgfplotsset{compat=1.14}
\usepackage{mathrsfs}
\usetikzlibrary{arrows}
\usetikzlibrary{patterns}
\usepackage{pgfplots}
\usetikzlibrary{intersections, pgfplots.fillbetween}
\usepackage[colorlinks=true,urlcolor=blue, citecolor=red,linkcolor=blue,linktocpage,pdfpagelabels, bookmarksnumbered,bookmarksopen]{hyperref}
\usepackage[hyperpageref]{backref}

\usepackage{cleveref}
\usepackage{xcolor}
\usepackage{amsthm} 
\usepackage{latexsym,amsmath,amssymb}

\usepackage{accents}
\usepackage[colorinlistoftodos,prependcaption,textsize=tiny]{todonotes}
\usepackage{a4wide}
\usepackage{soul}
\usepackage{mathtools} 
\usepackage{xparse} 

\pgfdeclarelayer{ft}
\pgfdeclarelayer{bg}
\pgfsetlayers{bg,main,ft}

\title{H\"older maps under Pfaffian constraints}

\author{Armin Schikorra}
\address[Armin Schikorra]{Department of Mathematics,
University of Pittsburgh,
301 Thackeray Hall,
Pittsburgh, PA 15260, USA \and Department of Mathematics,
Chulalongkorn University,
Bangkok,
Thailand}
\email{armin@pitt.edu}

\definecolor{indigo}{rgb}{0.29, 0.0, 0.51}
\definecolor{p1}{gray}{0.4}
\definecolor{p2}{gray}{0.6}
\definecolor{p3}{gray}{0.98}
\definecolor{p4}{gray}{0.8}
\definecolor{p5}{gray}{0.9}
plus 6pt minus 12pt
\def\eps{\varepsilon}

\def\B{\mathbb{B}}

\def\N{{\mathbb N}}

\def\H{{\mathbb H}}

\def\S{{\mathbb{S}}}

\newtheorem{theorem}{Theorem}
\newtheorem{lemma}[theorem]{Lemma}
\newtheorem{corollary}[theorem]{Corollary}
\newtheorem{proposition}[theorem]{Proposition}

\newtheorem{definition}[theorem]{Definition}

\newtheorem{conjecture}[theorem]{Conjecture}

\newcommand{\loc}{\mathrm{loc}}

\def\dist{{\rm dist\,}}

\def\lip{{\rm Lip\,}}
\def\rank{{\rm rank\,}}
\def\supp{{\rm supp\,}}
\def\loc{{\rm loc}}

\newcommand{\R}{\mathbb{R}}

\newcommand{\Z}{\mathbb{Z}}

\newcommand{\brac}[1]{\left (#1 \right )}
\newcommand{\abs}[1]{\left |#1 \right |}
\newcommand{\Ep}{\bigwedge\nolimits}

\newcommand{\barint}{
\rule[.036in]{.12in}{.009in}\kern-.16in \displaystyle\int }

\newcommand{\barcal}{\mbox{$ \rule[.036in]{.11in}{.007in}\kern-.128in\int $}}

\def\mvint_#1{\mathchoice
          {\mathop{\vrule width 6pt height 3 pt depth -2.5pt
                  \kern -8pt \intop}\nolimits_{\kern -3pt #1}}%
          {\mathop{\vrule width 5pt height 3 pt depth -2.6pt
                  \kern -6pt \intop}\nolimits_{#1}}%
          {\mathop{\vrule width 5pt height 3 pt depth -2.6pt
                  \kern -6pt \intop}\nolimits_{#1}}%
          {\mathop{\vrule width 5pt height 3 pt depth -2.6pt
                  \kern -6pt \intop}\nolimits_{#1}}}

\numberwithin{theorem}{section} \numberwithin{equation}{section}

\newcommand{\aleq}{\precsim}

\newcommand{\aeq}{\asymp}

\let\latexchi\chi
\makeatletter
\renewcommand\chi{\@ifnextchar_\sub@chi\latexchi}
\newcommand{\sub@chi}[2]{
  \@ifnextchar^{\subsup@chi{#2}}{\latexchi_{#2}}%
}
\newcommand{\subsup@chi}[3]{
  \latexchi_{#1}^{#3}%
}
\makeatother

\makeatletter
\def\tikz@arc@opt[#1]{
  {%
    \tikzset{every arc/.try,#1}%
    \pgfkeysgetvalue{/tikz/start angle}\tikz@s
    \pgfkeysgetvalue{/tikz/end angle}\tikz@e
    \pgfkeysgetvalue{/tikz/delta angle}\tikz@d
    \ifx\tikz@s\pgfutil@empty%
      \pgfmathsetmacro\tikz@s{\tikz@e-\tikz@d}
    \else
      \ifx\tikz@e\pgfutil@empty%
        \pgfmathsetmacro\tikz@e{\tikz@s+\tikz@d}
      \fi%
    \fi
    \tikz@arc@moveto
    \xdef\pgf@marshal{\noexpand%
    \tikz@do@arc{\tikz@s}{\tikz@e}
      {\pgfkeysvalueof{/tikz/x radius}}
      {\pgfkeysvalueof{/tikz/y radius}}}%
  }%
  \pgf@marshal%
  \tikz@arcfinal%
}
\let\tikz@arc@moveto\relax
\def\tikz@arc@movetolineto#1{%
  \def\tikz@arc@moveto{\tikz@@@parse@polar{\tikz@arc@@movetolineto#1}(\tikz@s:\pgfkeysvalueof{/tikz/x radius} and \pgfkeysvalueof{/tikz/y radius})}}
\def\tikz@arc@@movetolineto#1#2{#1{\pgfpointadd{#2}{\tikz@last@position@saved}}}
\tikzset{%
  move to start/.code=\tikz@arc@movetolineto\pgfpathmoveto,%
  line to start/.code=\tikz@arc@movetolineto\pgfpathlineto}
\makeatother

\begin{document}
\begin{abstract}
Given a one form $\lambda$ in $\mathbb{R}^N$ and $f: \mathbb{S}^{n} \to \mathbb{R}^N$ with $f^\ast \lambda = 0$ we discuss the maximal H\"older regularity of extensions $F: \mathbb{B}^{n+1} \to \mathbb{R}^N$ such that $F^\ast\lambda = 0$ in distributional sense.

Our analysis applies to mappings into the Heisenberg groups $\mathbb{H}_n$. It implies in particular that for all $n \geq 1$ any smooth horizontal map $f: \mathbb{S}^{n} \to \mathbb{H}_n$ can be extended to a $C^\alpha$-map $F: \mathbb{B}^{n+1} \to \mathbb{H}_n$ for some $\alpha > 1/2$. Moreover, if $n \geq 3$ we find $C^\alpha$-embeddings from $\mathbb{B}^{n+1}$ into $\mathbb{H}_n$ for some $\alpha > \frac{1}{2}$.

{In the appendix we discuss an (as of now unverified) approach to extend these arguments to find $C^\alpha$-embeddings from $\mathbb{B}^{2}$ into $\mathbb{H}_1$ for some $\alpha > \frac{1}{2}$, assisted by GPT-6 Astra.}
\end{abstract}
\dedicatory{Dedicated to Piotr Haj\l{}asz on the occasion of his 60th birthday}
\maketitle
%

\section{Introduction}
The main motivation for the present work is the following conjecture originating from and unresolved since Gromov's seminal work \cite{Gromov96}.
\begin{conjecture}\label{conj:gromov}
Let $\alpha \in (0,1)$ and let $k > n$. Every $\alpha$-H\"older continuous embedding
$F: \B^k \subset \R^k \to \mathbb{H}_n$ satisfies $\alpha \leq \frac{1}{2}$.
\end{conjecture}
Gromov proved the bound $\alpha \leq \frac{k}{k+1}$, \cite[Corollary 3.1.A]{Gromov96}, and it is easy to show that the bound cannot be $<\frac{1}{2}$ -- and that is the best known result until today for this question, despite several attempts to the positive and negative for this conjecture.

The Heisenberg group $\mathbb H_n$ can be identified with $\R^{2n+1}$, with coordinates
\[
        (x,y,t)\in \R^n\times \R^n\times \R,
\]
and equipped with the group law
\[
        (x,y,t)\ast (x',y',t')
        =
        \left(
        x+x',
        y+y',
        t+t'
        -\frac12\sum_{j=1}^n (x_jy'_j-y_jx'_j)
        \right).
\]
The horizontal distribution of $\mathbb H_n$ is $H\mathbb H_n:=\ker \omega_{\H_n}$
where $\omega_{\H_n}$ denotes the contact form
\begin{equation}\label{eq:Hcontactform}
        \omega_{\H_n}
        =
        dt+\frac12\sum_{j=1}^n
        \bigl(x_j\,dy_j-y_j\,dx_j\bigr).
\end{equation}

Equivalently, the horizontal space $H\mathbb H_n$ is spanned by the vector fields
\[
        X_j
        =
        \partial_{x_j}+\frac{y_j}{2}\partial_t,
        \qquad
        Y_j
        =
        \partial_{y_j}-\frac{x_j}{2}\partial_t,
        \qquad j=1,\ldots,n.
\]
A curve $\gamma(s)=(x(s),y(s),t(s))$ is called horizontal if $\gamma^\ast \omega_{\H_n}=0$.
The Carnot--Carath\'eodory distance between two points $p,q\in \mathbb H_n$ is the shortest length of a horizontal path from $p$ to
$q$. (Smooth) motion in the $t$-direction
is possible only indirectly, by moving around loops in the horizontal
variables. This is why the $x,y$ directions behave like Euclidean directions,
whereas the $t$-direction has the different, fractal, scaling.
A locally equivalent metric is  the Koranyi metric
\[
        d_{\mathcal K}(p,q) =
        \|q^{-1}\ast p\|_{\mathcal K}
        =
        \left(
        \left(|x-x'|^2+|y-y'|^2\right)^2
        +
        16\left(
        t-t'
        +\frac12\sum_{j=1}^n (x'_jy_j-y'_jx_j)
        \right)^2
        \right)^{1/4}.
\]
The systematic study of the local geometry of the Heisenberg groups $\H_n$ goes back to the aforementioned work by Gromov, \cite{Gromov96}. He argued that due to the fractal nature of the Heisenberg groups (and more general Carnot-Caratheorody groups) H\"older mappings are the natural category, and from his work originates \Cref{conj:gromov}, cf. \cite[Theorem 1.10.f]{BHW14}.

It is tempting to consider $\H_n$ as a metric space with a group structure and try to use that group structure to attack questions such as \Cref{conj:gromov} -- we in particular highlight the recent beautiful work \cite{WY} in this spirit.

Here, we rather take a different point of view, we will drop the metric and group structure of $\H_n$ and instead consider $\H_n$ as a Euclidean space $\R^{2n+1}$ with a \emph{differential restriction}:

It is well-known that a Lipschitz map $f: \Omega \subset \R^k \to \R^{2n+1}$ is Lipschitz as a map between metric spaces $f: \Omega \to \H_n$ if and only if
\begin{equation}\label{eq:pullbackdefhorizontal}
 f^\ast \omega_{\H^n} = 0 \quad \text{a.e. in $\Omega$}
\end{equation}
where $\omega_{\H_n}$ is the contact structure \eqref{eq:Hcontactform}. This differential equation is a great tool to study the behavior of Lipschitz maps into the Heisenberg group, as it implies that $\rank Df \leq n$ a.e. in $\Omega$ -- and this can be used regarding contractability and Lipschitz homotopy groups \cite{BHW14,WengerY1,WengerY2,Balogh-Faessler-2009,HST14,H18}.

For a H\"older map, the a.e. notion of \eqref{eq:pullbackdefhorizontal} makes no geometric sense, even for maps for which the pointwise a.e. notion can be defined, cf. \cite{Salem1943,ParkSchikorra25}. Essentially this is equivalent to the observation that for $f: [0,1] \to \R$ a notion of $f'=0$ a.e. does not imply that $f$ is constant (take the Heaviside function, or the Cantor staircase). Thus, the H\"older theory began from a different analytic viewpoint.  Le Donne and
Z\"ust used H\"older Jacobians and integration of H\"older forms in the Heisenberg setting \cite{LZ}, building on Z\"ust's theory of H\"older currents \cite{Z11}.
Balogh, Kozhevnikov and Pansu adapted these ideas to prove restrictions on H\"older embeddings from Euclidean spaces to Carnot groups \cite{BKP}. On the other hand, the aforementioned work Wenger--Young, and Lytchak \cite{WY,LWY20} proved for $n=1$ that for any $\alpha <\frac{k}{k+1}$, $k=2,3$ there are $C^\alpha$-extensions of maps $f: \S^k \to \H_1$ (not embeddings). Their argument is based on the metric structure and the group law of $\H_1$, it seems to be one-dimensional in nature and does not easily generalize to mappings into $\H_n$.

Here, we replace the notion \eqref{eq:pullbackdefhorizontal} with
\begin{equation}\label{eq:pullbackheisenberg}
 f^\ast\omega_{\H^n} = 0 \quad \text{in distributional sense}.
\end{equation}
See \Cref{def:distributionalsensepullback} for the precise notion. This tiny change from ``a.e.'' to ``distributional'' makes all the difference. Indeed any Euclidean $f \in C^\alpha(\B^k,\R^{2n+1})$ belongs to $f \in C^\alpha(\B^k,\H_n)$ if and only if \eqref{eq:pullbackheisenberg} holds.

Thus, we stop for the moment to discuss the Heisenberg group, but instead discuss

\subsection*{Maps between Euclidean spaces with a Pfaffian form restriction} Fix a non-vanishing Pfaffian form in $\R^N$, $\lambda \in C^\infty(\Ep^1 \R^N)$. Motivated from the questions above, we are interested in studying how ``difficult'' it is to find H\"older maps $f$ that satisfy $f^\ast \lambda=0$. More precisely, we want to discuss two aspects:
\begin{itemize}
 \item Given $f: \partial \B^n \to \R^N$ with $f^\ast \lambda = 0$, can we extend $f$ to $F: \overline{\B^n} \to \R^N$ with $F^\ast \lambda = 0$?
        \item Given $F_0: \B^n \to \R^N$, do we find a ``close-by'' map $\tilde{F}$ which satisfies $\tilde{F}^\ast \lambda=0$?
\end{itemize}

Here, an henceforth, by $f^\ast \lambda = 0$ we mean the geometrically relevant distributional sense. Since $\lambda$ is a one-form, this restricts our interest to $C^\alpha$-maps, $\alpha > \frac{1}{2}$ where the distributional notion is well-defined, see \Cref{def:distributionalsensepullback}.

The answers to above questions change depending on the H\"older regularity of the maps involved, and they depend on the\footnote{This definition is related to the \emph{Pfaff rank}, which for $\lambda$ with the properties above is $2d+1$.
} ``rank'' of $\lambda$:

\begin{definition}
Let $U \subset \R^N$ open, $\lambda \in C^\infty(U;\Ep^1 \R^N)$ be a nowhere vanishing one form. We say that $\lambda$ has rank $d \in \{0,1,\ldots \lfloor \frac{N-1}{2}\rfloor \}$ if
\[
        \lambda\wedge(d\lambda)^d\neq0,
        \qquad \text{and} \qquad
        \lambda\wedge(d\lambda)^{d+1}\equiv 0 \quad \text{in $U$}
\]
\end{definition}

First we consider the $d=0$ case. The following is very simple to prove.
\begin{proposition}[Lipschitz Extension for $d=0$]\label{pr:Frobeniussituation}
Let $n\ge 2$. Suppose that for an open set $U \subset \R^N$
\[
        \lambda\wedge d\lambda\equiv 0 \quad \text{but $\lambda \neq 0$} \quad \text{in $U$}
\]
Then for any $X_0 \in U$ there exists a smaller neighborhood $V\Subset U$ of $X_0$ such that every $C^\infty$ map
\[
        f:\partial \B^n\to V,
        \qquad
        f^\ast\lambda=0,
\]
has a Lipschitz extension $F:\B^n\to V$ such that
\[
        F=f \quad\text{on }\partial \B^n,
        \qquad
        F^\ast\lambda=0 \quad \text{in $\B^n$}
\]
\end{proposition}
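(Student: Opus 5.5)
The plan is to use the Frobenius theorem to straighten the integrable distribution $\ker\lambda$, and then build the extension by a cone construction in the leaf coordinates. Since $\lambda\wedge d\lambda\equiv 0$ and $\lambda(X_0)\neq 0$, the Frobenius theorem gives a neighborhood $W\subset U$ of $X_0$ and a smooth diffeomorphism $\Phi: W\to (-1,1)^N$ with $\Phi(X_0)=0$ and a smooth nonvanishing function $g$ on $W$ such that
\[
\lambda = g\, d\Phi^N \quad \text{in } W .
\]
Concretely, in these coordinates $\lambda$ is a multiple of $dy_N$, and the leaves of $\ker\lambda$ are the slices $\{y_N=c\}$. I will take $V:=\Phi^{-1}((-\tfrac12,\tfrac12)^N)\Subset U$. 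Then $\Phi(V)$ is a cube, in particular convex, and every slice of it is convex.

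Now let $f:\partial\B^n\to V$ be smooth with $f^\ast\lambda=0$. Put $h:=\Phi\circ f$. Since $g\neq 0$, we get $dh^N = g(f)^{-1} f^\ast\lambda = 0$ on $\partial\B^n$. Here the assumption $n\geq 2$ is used: $\partial\B^n=\S^{n-1}$ is connected, so $h^N\equiv c$ is constant. Hence $h$ maps into the single convex slice $\Phi(V)\cap\{y_N=c\}$. I then define the extension by radial cone coning toward an interior point of that slice:
\[
H(r\theta):=r\,h(\theta)+(1-r)\,(0,\dots,0,c),\qquad \theta\in\S^{n-1},\ r\in[0,1].
\]
This map is Lipschitz, since it is smooth away from the origin with bounded derivative. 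By convexity it takes values in $\Phi(V)\cap\{y_N=c\}$, and its last coordinate is constant, so $dH^N=0$ a.e.

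Set $F:=\Phi^{-1}\circ H$. Then $F$ is Lipschitz, maps into $V$, agrees with $f$ on $\partial\B^n$, and satisfies
\[
F^\ast\lambda=(g\circ F)\,dH^N=0 \quad \text{a.e.}
\]
In fact $F$ is smooth off the origin and $F^\ast\lambda$ vanishes identically there. For Lipschitz maps, the distributional notion in \Cref{def:distributionalsensepullback} agrees with the a.e. one, because $F^\ast\lambda$ is then an $L^\infty$ form. So $F^\ast\lambda=0$ in $\B^n$ in the required sense.

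The main obstacle is only bookkeeping: choosing $V$ so that its image under the Frobenius chart is convex in each leaf, and checking that the distributional pullback for Lipschitz maps reduces to the pointwise one. The topological input $n\geq 2$ (connectedness of $\S^{n-1}$) is essential. For $n=1$ the boundary consists of two points, which could lie on different leaves, and then no extension exists.
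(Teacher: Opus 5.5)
Your proposal is correct and follows essentially the same route as the paper's proof. Both put $\lambda$ in the Frobenius normal form $\psi\,dp^1$ on a coordinate cube, use connectedness of $\partial\B^n$ (for $n\ge2$) to force $p^1\circ f\equiv c$, cone off the remaining coordinates radially as $|x|\,g(x/|x|)$ (which is exactly your $H$), and conclude from the Lipschitz regularity that the a.e.\ and distributional pullbacks coincide.
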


If the rank of $\lambda$ is positive, things become more interesting, and obstructions to extensions appear. In \cite{HajlaszMirraSchikorra,HS2023,S20}, and likely in earlier works, the following is \emph{de facto} established:
\begin{proposition}[Non-Extension]\label{pr:nonextension}
Suppose that for an open set $U \subset \R^N$ there exists
$d\in\{1,\ldots,\lfloor (N-1)/2\rfloor\}$ such that $\lambda$ has rank $d$ in $U$.

Then for any $X_0 \in U$ there exists a small contractible neighborhood $V \subset U$, $V \ni X_0$ and $f: \partial \B^{d+1} \to V$ a smooth embedding with $f^\ast \lambda = 0$ such that for any $\alpha > \frac{d+1}{d+2}$ it is impossible to find a $C^\alpha$-extension $F: \overline{\B^{d+1}} \to V$, i.e. there is no $F \in C^{\alpha}(\overline{\B^{d+1}},\R^N)$, $F \Big |_{\partial \B^{d+1}} = f$ and $F^\ast \lambda = 0$ in $\B^{d+1}$ in distributional sense.
\end{proposition}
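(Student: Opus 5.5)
The plan is to reduce to a local normal form, find a horizontal sphere carrying a nonzero integral of the form $\lambda\wedge(d\lambda)^d$-type quantity, and then use a degree/Stokes argument showing that any $C^\alpha$ horizontal extension with $\alpha>\frac{d+1}{d+2}$ forces that quantity to vanish.

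\textbf{Step 1 (normal form).} By Darboux's theorem for Pfaffian forms of constant rank $d$ (Pfaff class $2d+1$), near $X_0$ there are coordinates $(x,y,t,z)\in\R^d\times\R^d\times\R\times\R^{N-2d-1}$ and a nonvanishing function $h$ with $\lambda = h\,\bigl(dt+\frac12\sum_{j=1}^d (x_j\,dy_j - y_j\,dx_j)\bigr)=h\,\omega_{\H_d}$. Since $h\neq 0$, $F^\ast\lambda=0$ distributionally is equivalent to $F^\ast\omega_{\H_d}=0$ for $C^\alpha$, $\alpha>\frac12$ maps; this needs a product rule $(h\circ F)\,F^\ast\omega$ in the distributional (Young-integral) sense, which is fine because $h\circ F\in C^\alpha$ and $F^\ast\omega$ is a distribution of order governed by $C^{2\alpha-1}$. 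We choose $V$ as a small coordinate ball, and we may ignore the $z$-coordinates (set them constant on $f$; the argument below never uses them).

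\textbf{Step 2 (the sphere).} We take $f:\partial\B^{d+1}\to V$ a smooth horizontal embedding into $\H_d\times\{z_0\}$ such that the $d$-form $\kappa:=\sum_j dx_j\wedge dy_{j}\wedge\ldots$ — more precisely the closed form $\sigma = (d\omega_{\H_d})^{\wedge}$ restricted appropriately, e.g.\ $\sigma=dx_1\wedge\cdots\wedge dx_{d}$-type combined with $t$: we want a $d$-form $\eta$ with $d\eta=\pm dt\wedge dx_1\wedge\dots\wedge dx_d$ modulo the contact ideal, and $\int_{\partial\B^{d+1}} f^\ast\eta\neq0$. Concretely, use $\eta = t\,dx_1\wedge\cdots\wedge dx_d$ and take $f$ to be the standard horizontal sphere obtained by lifting a Lagrangian-type $d$-sphere in $\R^{2d}$ whose lift closes up, arranged so the $(x,t)$-projection of $f$ is a $d$-sphere in $\R^{d+1}$ of nonzero winding; this is the construction in \cite{HajlaszMirraSchikorra,HS2023}.

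\textbf{Step 3 (obstruction).} Suppose $F\in C^\alpha$ extends $f$ with $F^\ast\omega=0$. By Stokes (valid for $C^\alpha$ maps integrating $(d+1)$-forms once $\alpha>\frac{d+1}{d+2}$, via Züst's Hölder currents \cite{Z11}), $\int_{\partial\B^{d+1}} f^\ast\eta=\int_{\B^{d+1}} F^\ast(dt\wedge dx_1\wedge\cdots\wedge dx_d)$. On the other hand $dt=\omega-\frac12\sum(x_jdy_j-y_jdx_j)$, and writing $dt\wedge dx_1\wedge\cdots\wedge dx_d = \omega\wedge dx_1\wedge\cdots\wedge dx_d + \beta$ where $\beta$ is a $(d+1)$-form in the $2d$ variables $(x,y)$ only; the first term pulls back to zero using $F^\ast\omega=0$ (again a Hölder product rule, which is where $\alpha>\frac{d+1}{d+2}$ enters so that the wedge of $F^\ast\omega$ with $d$ Hölder differentials is well defined and vanishes), while $F^\ast\beta=0$ because $F^\ast$ of $(d+1)$-forms in $2d$ variables factors through the $(x,y)$-projection, which has "Hölder rank $\le d$": indeed $d\beta$-type arguments, or better, $\beta=d\gamma$ modulo $\omega$ with $\gamma$ in the ideal generated by $d\omega$, and horizontality gives $F^\ast d\omega=0$ in the Hölder sense, hence $F^\ast(d\omega\wedge\cdot)=0$. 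Thus the right side is $0$, contradicting Step 2.

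The main obstacle is Step 3: rigorously justifying Stokes and the vanishing of $F^\ast(\omega\wedge\cdots)$ and $F^\ast(d\omega\wedge\cdots)$ for merely $C^\alpha$ maps. I would use the Züst/Le Donne–Züst Hölder integration with exponent condition $\alpha(d+2)>d+1$ applied to $(d+1)$-forms in $d+1$ dimensions involving one extra $C^\alpha$ coefficient, approximating $F$ by mollifications and passing to the limit via commutator estimates.
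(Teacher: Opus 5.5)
Your overall architecture matches the paper's: a normal form, an embedded horizontal $d$-sphere detected by a $d$-form whose differential lies in the ideal generated by the contact form and its differential, Stokes, and $(d+2)\alpha>d+1$ to kill the ideal terms. But Step 3 has a genuine gap at the boundary. Your $\eta=t\,dx_1\wedge\cdots\wedge dx_d$ is a global form, so $d\eta$ does not vanish near $f(\partial\B^{d+1})$, and you need
\[
\int_{\B^{d+1}}F^\ast\bigl(\omega_{\H_d}\wedge dx_1\wedge\cdots\wedge dx_d\bigr)=0
\qquad\text{and}\qquad
\int_{\B^{d+1}}F^\ast\beta=0 .
\]
These are pairings against the constant function $1$, which is not a test function on $\B^{d+1}$. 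The hypothesis only gives $F^\ast\lambda=0$ in $\mathcal{D}'(\B^{d+1})$. Its quantitative form for mollifications, $\|F_\eps^\ast\omega_{\H_d}\|_{L^\infty(K)}\lesssim\eps^{2\alpha-1}$, is available only on sets $K$ at distance $\gtrsim\eps$ from $\partial\B^{d+1}$. For $\beta$, what you need is the Lefschetz decomposition $\beta=d\omega_{\H_d}\wedge\tilde\omega_2$ (cf.\ \Cref{la:lefschetz}), rather than the vaguer ``$\beta=d\gamma$ modulo $\omega$''. Integrating $d(F_\eps^\ast\omega_{\H_d})\wedge F_\eps^\ast\tilde\omega_2$ by parts then produces a boundary integral $\int_{\partial\B^{d+1}}F_\eps^\ast(\omega_{\H_d}\wedge\tilde\omega_2)$, which your proposal never mentions.

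The paper removes the boundary through its choice of test form. By \cite[Lemma 2.6]{HS2023} it takes a $d$-form $\theta$ (called $\omega$ there) with $d\theta\equiv0$ near $g(\S^d)$ and $\int_{\S^d}g^\ast\theta\neq0$. It then writes $d\theta=\lambda\wedge\tilde\omega_1+d\lambda\wedge\tilde\omega_2$ with $\supp\tilde\omega_i\subset\supp d\theta$, so $F_\eps^\ast\tilde\omega_i$ vanishes near $\partial\B^{d+1}$ once $F_\eps$ is uniformly close to $F$. Only compactly supported pairings occur, and the argument works for any embedded horizontal sphere. Your route can be repaired, but you must add two estimates. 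The boundary layer of width $\sim\eps$ contributes at most $\eps\cdot\eps^{(d+1)(\alpha-1)}\to0$, since $\alpha>\frac{d}{d+1}$. The boundary integral tends to $\int_{\partial\B^{d+1}}f^\ast(\omega_{\H_d}\wedge\tilde\omega_2)=0$ by \Cref{la:smoothapproxpullback} on $K=\partial\B^{d+1}$, because $f$ is smooth and horizontal.

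Step 2 also needs care, since you never verify $\int f^\ast\eta\neq0$. Transporting the standard sphere \eqref{eq:legendrianembedding} to Heisenberg coordinates via \Cref{la:heisenbergnormalied} gives $\int f^\ast\eta=\pm\frac12\int_{\B^{d+1}}(1-\xi_1^2)\xi_1^d\,d\xi$, which vanishes for odd $d$. A better choice is $\eta=p^1\,dp^3\wedge\cdots\wedge dp^{2d+1}$ in the normal form \eqref{eq:prnonextensionlambda}. Then $d\eta=\lambda\wedge dp^3\wedge\cdots\wedge dp^{2d+1}$ exactly, so there is no $d\lambda$-part, no Lefschetz step and no integration by parts. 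On the standard sphere, $\int\gamma^\ast\eta=\pm\int_{\B^{d+1}}\xi_1^2\,d\xi\neq0$.
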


Since $\lambda$ is a Pfaffian form and the distributional $f^\ast \lambda$ makes sense as long as $f \in C^{\alpha}$, $\alpha > \frac{1}{2}$, one could believe that \Cref{pr:nonextension} is true for any $\alpha > \frac{1}{2}$, but that is not the case. This is our first main result:
\begin{theorem}
\label{th:lambdaext}
Let $n \geq 2$, $U\subset\R^N$ be open, let $\lambda\in C^\infty(\Lambda^1\R^N)$ be nowhere zero and suppose that $\lambda$ has rank $d$ in $U$ for some $d \geq 1$. Set $s:=\left\lceil\frac{n+1}{d}\right\rceil$ and let $X_0 \in U$.

Then there exists a smaller neighborhood $V\Subset U$ of $X_0$ such that for every
\[
        f \in C^\infty(\partial \B^n,V),
        \qquad
        f^\ast\lambda=0 \text{ on $\partial \B^n$}
\]
and for any
$\frac{1}{2}<\alpha<\frac{s+1}{2s+1},
$ there exists an extension
$F\in C^\alpha(\overline{\B^n};V)$, $F \Big |_{\partial \B^n} = f$ such that
        \[
        F^\ast\lambda=0 \quad \text{in distributional sense in $\overline{\B^n}$}.
\]

\end{theorem}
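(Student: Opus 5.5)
Since $\lambda$ has constant rank $d$ near $X_0$, I would first apply the Darboux normal form theorem for Pfaffian forms. After shrinking to a neighborhood and changing coordinates by a smooth diffeomorphism $\Phi$, the form becomes
\[
\lambda = g\,\Bigl(dt+\tfrac12\sum_{j=1}^d (x_j\,dy_j - y_j\,dx_j)\Bigr), \qquad g\neq 0,
\]
in coordinates $(x,y,t,z)\in\R^d\times\R^d\times\R\times\R^{N-2d-1}$. For $C^\alpha$ maps with $\alpha>\frac12$, the distributional pullback commutes with composition by smooth diffeomorphisms. It is also unaffected by multiplication by the nonvanishing factor $g$; I would justify this with the Young-integral/commutator estimates behind \Cref{def:distributionalsensepullback}. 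So the problem reduces to $\H_d\times\R^{N-2d-1}$. The $z$-components are unconstrained, so I extend them harmonically (or Lipschitz), and only need to extend $(x,y,t)$ horizontally into $\H_d$. This is where $V$ is chosen.

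Next I would extend the horizontal part $(x,y)$ smoothly to some $(X,Y)$ on $\overline{\B^n}$. A smooth $t$-component would require the exactness condition $d\bigl(\sum_j X_j\,dY_j\bigr)=0$, i.e.\ $\sum_j dX_j\wedge dY_j=0$, which generally fails in the interior. The fix is an iterative Nash/convex-integration-type scheme on dyadic scales, in the spirit of Wenger--Young but using only the Pfaffian structure. At stage $k$ we have a smooth map $F_k=(X^k,Y^k,T^k)$ whose contact defect $F_k^\ast\omega_{\H_d}=:e_k$ is supported away from the boundary. We add oscillatory horizontal perturbations of amplitude $a_k$ and frequency $\mu_k$. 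Each perturbation traces small horizontal loops whose enclosed symplectic area $\sim a_k^2$ cancels the $dt$-error locally, as in the lift of a horizontal curve. The $d$ symplectic pairs $(x_j,y_j)$ let us cancel the error in $d$ independent directions simultaneously. Since the error one-form lives on an $n$-dimensional domain, its $n$ components (together with the extra constraint from the boundary direction, hence $n+1$) must be absorbed by $d$ pairs. I expect this to require $s=\lceil (n+1)/d\rceil$ sequential sub-steps per stage, each killing $d$ components and creating a smaller error of the form $a_k^2/\mu_k$ in the others.

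The Hölder exponent then follows from the standard bookkeeping. Choose $a_{k+1}\sim a_k^{1+\epsilon}$ and $\mu_{k+1}\sim \mu_k^{b}$, and require the error after $s$ sub-steps to decay. The $C^\alpha$ norm of the sum is controlled by interpolating $\|\cdot\|_{C^0}\lesssim a_k$ against $\|\cdot\|_{C^1}\lesssim a_k\mu_k$. The $s$-step iteration gives the threshold $\alpha<\frac{s+1}{2s+1}$, since each sub-step loses a factor in frequency (the usual $\frac{1}{1+2/N}$-type Nash exponent with $N$ replaced by $s$). Near $\partial\B^n$, cutoffs with geometrically shrinking scales as $\operatorname{dist}(\cdot,\partial\B^n)\to0$ keep $F=f$ on the boundary with uniform Hölder control. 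Finally, the limit satisfies $F^\ast\lambda=0$ distributionally by the stability of the distributional pullback under $C^{\alpha'}$ convergence for some $\frac12<\alpha'$, together with $e_k\to0$ in a weak norm.

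The main obstacle is the stage step. I must show that a defect one-form on an $n$-dimensional domain can be decomposed into $s$ groups of $d$ "primitive" pieces, each cancelable by a single family of oscillating loops in one symplectic plane, with errors of the right order. Controlling the cross-interaction terms between different pairs and sub-steps, so that they remain of size $a^2/\mu$ and do not degrade the exponent, is where the numerology $\frac{s+1}{2s+1}$ must be verified. I would first try the Källén/De Lellis--Székelyhidi style decomposition of the error into a sum of rank-one terms $\sum_i \phi_i^2\, d\ell_i$ with linear $\ell_i$, and cancel each term with $X_j+i Y_j \approx \frac{\phi_i}{\mu}e^{i\mu \ell_i}$.
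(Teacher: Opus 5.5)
Your overall architecture is the paper's. You use the normal form and freeze the coordinates beyond $2d+1$. You corrugate in the symplectic pairs with a compensating change of $p^1$, decompose the defect into $n+1$ primitive terms $\sum_j A_j^2\,d\phi_j$ spread over the $d$ pairs in $s$ frequency levels, interpolate, and pass to the limit. But the step you flag as the main obstacle is exactly where the exponent comes from, and the error model you propose for it is wrong.

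In the paper's single update (Proposition~\ref{pr:oneupdate}), in the normalization $\lambda=dp^1+\sum_m p^{2m}dp^{2m+1}$, one takes $H^{2m}=L^{-1/2}A\,\eta(L\phi)$, $H^{2m+1}=L^{-1/2}A\,\zeta(L\phi)$ and $H^1=-F^{2m}H^{2m+1}+L^{-1}A^2\chi(L\phi)$, with $2\chi+\eta\zeta=0$ and $\chi'+\eta\zeta'=1$. The self-interaction, including all $dA$-terms, then cancels exactly and
\[
(F+H)^\ast\lambda-F^\ast\lambda=A^2\,d\phi+H^{2m}\,dF^{2m+1}-H^{2m+1}\,dF^{2m}.
\]
So the error is \emph{linear} in the amplitude and is multiplied by the current gradient of the \emph{same} pair. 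Its size is $L^{-1/2}\|A\|_\infty\|D(F^{2m},F^{2m+1})\|_\infty$, not $a^2/\mu$. (Also, to cancel $\phi_i^2\,d\ell_i$ the amplitude must be $\phi_i/\sqrt{\mu}$, not $\phi_i/\mu$: a loop of radius $a$ run at frequency $\mu$ contributes $a^2\mu$.) An error quadratically small in the amplitude and independent of the background would make the iteration essentially lossless and allow exponents arbitrarily close to $1$, which Proposition~\ref{pr:nonextension} forbids. So $\frac{s+1}{2s+1}$ cannot come out of your model. Your heuristic $\frac{1}{1+2/N}$ with $N=s$ gives $\frac{s}{s+2}$, a different number.

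The missing mechanism is this. Because the error sees only the gradient of the pair being corrugated, different pairs do not interact at all, while each reuse of a pair forces the frequency up by $M^2$. With $\|A_j\|_\infty\sim r^{1/2}$ and $L_j=M^{2\nu(j)}\Gamma^2/r$, every error is $\lesssim r/M$, and the $\nu$-th use of a pair lifts its gradient to $M^{\nu}\Gamma$. Each pair is used at most $s=\lceil\frac{n+1}{d}\rceil$ times (Lemma~\ref{la:wigglebookkeeper}), so a stage maps $(r,\Gamma)\mapsto(r/M,M^s\Gamma)$. The $C^0$-increment is $\lesssim r/(M\Gamma)$ (from the lowest level) and the $C^1$-increment is $\lesssim M^s\Gamma$ (from the highest level); a single pair $(a_k,\mu_k)$ cannot encode both. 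With $M_k=\Gamma_k^\gamma$ one gets $r_k\lesssim\Gamma_k^{-\sigma}$ for $\sigma<1/s$. Interpolation then gives $\|F_{k+1}-F_k\|_{C^\alpha}\lesssim r_k^{1-\alpha}M_k^{(s+1)\alpha-1}\Gamma_k^{2\alpha-1}$, which is summable for small $\gamma$ once $\alpha<\frac{1+\sigma}{2+\sigma}$; letting $\sigma\uparrow 1/s$ yields $\frac{s+1}{2s+1}$.

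Two further corrections. First, the count $n+1$ is not a boundary effect but positivity. Writing the mollified defect as $-\sum_j A_j^2\,d\phi_j$ with smooth amplitudes bounded below requires $n+1$ directions positively spanning $\R^n$ (Proposition~\ref{pr:geometriclemma}), and the same count appears in Theorem~\ref{th:closebymaps}, where there is no boundary. Second, you never arrange your stage-$k$ hypothesis that the defect vanishes near $\partial\B^n$ at $k=0$: a generic smooth extension has nonzero normal defect on $\partial\B^n$. The paper first makes $F_0^\ast\lambda=0$ in a collar via the radial extension $x\mapsto f(x/|x|)$ (Lemma~\ref{la:smooth-horizontal-collar-replacement}). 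The supports of the corrections then grow by $2\ell_k$ with $\sum_k\ell_k<\infty$, so $F=F_0$ on a fixed collar and no boundary-adapted cutoffs are needed.
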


Next we adress the question about genericity of the condition $F^\ast \lambda = 0$ -- i.e. given a map $F_0$, can we find a map $F$ closeby so that $F^\ast \lambda = 0$? Again we first discuss the obstruction:

\begin{proposition}\label{pr:nonclose}
Suppose that for an open set $U \subset \R^N$ there exists
$d\in\{1,\ldots,\lfloor (N-1)/2\rfloor\}$ such that $\lambda$ has rank $d$ in $U$.

Then for any $X_0 \in U$ there exists a small neighborhood $V \subset U$, $V \ni X_0$ and $F_0 \in C^\infty(\overline{\B^{d+1}},V)$ with the following property. For any $\alpha > \frac{d+1}{d+2}$ there exists an $\eps > 0$ any map $F: \B^{d+1} \to V$ with $\|F-F_0\|_{C^\alpha(\B^{d+1})} \leq \eps$ does \emph{not} satisfy $F^\ast \lambda =0$ in $\B^{d+1}$
\end{proposition}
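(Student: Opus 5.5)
We argue by contradiction and reduce to the obstruction behind \Cref{pr:nonextension}. The mechanism there is a H\"older version of Stokes' theorem. For $F\in C^\alpha(\overline{\B^{d+1}},\R^N)$ with $\alpha>\frac{d+1}{d+2}$, the distributional pullback $F^\ast(\lambda\wedge(d\lambda)^d)$, a $(2d+1)$-form pulled back to a $(d+1)$-dimensional domain, is irrelevant. What matters is the integral of $F^\ast(d\lambda)^{\wedge\cdot}$-type quantities in the sense of Z\"ust. Concretely, using Darboux coordinates near $X_0$, we write $\lambda = g\,(dt + \sum_{j\le d} x_j\,dy_j)$ with $g$ nonvanishing. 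Then $F^\ast\lambda=0$ forces $F^\ast(dx_1\wedge dy_1\wedge\dots)$-type Jacobian quantities to vanish in the Z\"ust sense. The precise invariant is a H\"older Jacobian $J(F):=\int_{\B^{d+1}} \phi\, d\sigma_1\wedge\cdots\wedge d\sigma_{d+1}$ built from $d+1$ coordinate functions of the Darboux chart. This is well defined and continuous in $C^\alpha$ exactly when $(d+1)\alpha+\alpha>d+1$, i.e.\ $\alpha>\frac{d+1}{d+2}$ (the Z\"ust condition for $d+2$ functions on a $(d+1)$-dimensional domain). \Cref{pr:nonextension} (as in \cite{HajlaszMirraSchikorra,HS2023,S20}) shows that $F^\ast\lambda=0$ distributionally forces this quantity, for a suitable choice of $\phi$ and coordinates, to be zero, or more precisely forces $F$ to be ``rank $\le d$'' in the H\"older sense.

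\textbf{Step 1 (choice of $F_0$).} Choose $F_0$ smooth on $\overline{\B^{d+1}}$, valued in a small neighborhood $V$, such that $J(F_0)\neq 0$ for a fixed test function $\phi\in C_c^\infty(\B^{d+1})$. For instance, in Darboux coordinates $(x,y,t,z)$ take $F_0(p)=(p_1,\dots,p_d,\,0,\dots,0,\,t=p_{d+1},0)$ suitably scaled. Then $\int\phi\, d(F_0)^{x_1}\wedge\cdots\wedge d(F_0)^{x_d}\wedge d(F_0)^{t}=\int\phi\neq0$. Essentially any immersion whose image is transverse to $\ker\lambda$ in a $(d+1)$-dimensional way works.

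\textbf{Step 2 (rigidity for horizontal $F$).} Show that if $F\in C^\alpha$, $\alpha>\frac{d+1}{d+2}$, and $F^\ast\lambda=0$ distributionally, then $\int\phi\,dF^{x_1}\wedge\cdots\wedge dF^{x_d}\wedge dF^t=0$ in Z\"ust's sense. We mollify $F_\eps=F\ast\eta_\eps$ and write $dF_\eps^t=F_\eps^\ast(\lambda/g)-\sum_j F_\eps^{x_j}dF_\eps^{y_j}$. The first term tends to zero against the $C^{\alpha}$-controlled wedge of the $x$-differentials by the commutator estimates of \cite{HajlaszMirraSchikorra,S20}, and this is exactly where $\alpha>\frac{d+1}{d+2}$ is used. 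The remaining term produces $d+1$ differentials $dx_1,\dots,dx_d,dy_j$ together with the factor $x_j$. Its integral is, by antisymmetry and a further integration by parts, a Z\"ust integral of a $(d+1)$-form of the type $dx_1\wedge\cdots\wedge dx_d\wedge dy_j$ multiplied against $x_j\phi$. I would instead use the cleaner route: this is precisely the content of \Cref{pr:nonextension}'s proof, namely that $F^\ast(d\lambda)^{\wedge}$-type Jacobians vanish. So we cite that step and adapt $\phi$ so that the relevant Jacobian of $F_0$ is the nonzero one. This step is the main obstacle: one must identify a Jacobian that is nonzero for $F_0$ but forced to vanish for horizontal $F$, uniformly continuous in the $C^\alpha$ topology.

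\textbf{Step 3 (continuity and conclusion).} Z\"ust's estimate gives
\[
|J(F)-J(F_0)|\le C(\phi,\alpha)\,\|F-F_0\|_{C^\alpha}\bigl(1+\|F\|_{C^\alpha}+\|F_0\|_{C^\alpha}\bigr)^{d+1}.
\]
With $\|F-F_0\|_{C^\alpha}\le\eps\le 1$ the bracket is bounded, so $|J(F)-J(F_0)|\le C\eps$. Choosing $\eps<|J(F_0)|/C$ forces $J(F)\neq0$, contradicting Step 2. Hence $F^\ast\lambda\neq0$.
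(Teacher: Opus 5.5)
Your overall architecture is sound: a $C^\alpha$-continuous interior invariant that is nonzero at $F_0$ and vanishes on horizontal maps. Step 1 and Step 3 are fine, and so is your treatment of the $F_\eps^\ast(\lambda/g)$ term. The gap is Step 2, which is the entire content of the proposition and is not actually proved.

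\textbf{Why Step 2 is open.} Your direct computation stalls at
\[
\sum_{j=1}^d \int \phi\, F^{x_j}\, dF^{x_1}\wedge\cdots\wedge dF^{x_d}\wedge dF^{y_j}.
\]
This is again a $(d+1)$-Jacobian of a horizontal map, so reducing to it is circular unless you show why it vanishes. Your fallback citation does not supply this. \Cref{pr:nonextension} is a statement about one specific boundary map. Its proof treats $\int_{\B^{d+1}}F_\eps^\ast d\omega$ for an exact form with a support condition near $\partial\B^{d+1}$. What kills the \emph{$d\lambda$-type} terms there is the decomposition of \Cref{la:lefschetz}, which you never invoke. Your form $dx_1\wedge\cdots\wedge dx_d\wedge dt$ is not of that type until you decompose it.

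\textbf{The missing idea.} It is one identity. Write $\lambda_0=dt+\sum_l x_l\,dy_l$, so $d\lambda_0=\sum_l dx_l\wedge dy_l$. For $l\neq j$, $dx_l$ already occurs in $\bigwedge_{k\neq j}dx_k$, hence
\[
dx_1\wedge\cdots\wedge dx_d\wedge dy_j=\pm\Bigl(\bigwedge_{k\neq j}dx_k\Bigr)\wedge d\lambda_0 .
\]
Therefore $dx_1\wedge\cdots\wedge dx_d\wedge dt=\pm\lambda_0\wedge dx_1\wedge\cdots\wedge dx_d+\sum_j \pm x_j\bigl(\bigwedge_{k\neq j}dx_k\bigr)\wedge d\lambda_0$; this is \Cref{la:lefschetz} made explicit.

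For the mollified $F_\eps$, the $j$-th term of the second group equals $\pm\int\phi\, F_\eps^{x_j}\bigwedge_{k\neq j}dF_\eps^{x_k}\wedge d(F_\eps^\ast\lambda_0)$. Since $\phi\in C_c^\infty$ and $\bigwedge_{k\neq j}dF_\eps^{x_k}$ is closed, Stokes moves $d$ onto $\phi F_\eps^{x_j}$ with no boundary term. Now use $\|F_\eps^\ast\lambda_0\|_{L^\infty(\supp\phi)}\aleq\eps^{2\alpha-1}$ (the estimate from \cite{HajlaszMirraSchikorra} that the paper also uses) and $\|DF_\eps\|_{L^\infty(\supp\phi)}\aleq\eps^{\alpha-1}$. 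Both groups are then $O(\eps^{(d+2)\alpha-(d+1)})\to 0$, while $\int\phi\,F_\eps^\ast\eta\to J(F)$ by continuity.

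This exponent is where $\alpha>\frac{d+1}{d+2}$ enters. Continuity of $J$ already holds for $\alpha>\frac{d}{d+1}$, because $\phi$ is smooth and the form has constant coefficients in the chart. So your attribution of the threshold to the Z\"ust condition is off, though harmless.

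\textbf{Comparison with the paper.} With Step 2 filled in this way, your route is genuinely different from the paper's and somewhat simpler. The paper pairs $F$ with a $d$-form $\omega$ that is closed near $F_0(\frac12\S^d)$ and has nonzero integral there. That requires topological input from \cite{HS2023} and a Legendrian-type $F_0$. It then applies Stokes on $\frac12\B^{d+1}$ and must control an extra boundary term $\int_{\frac12\S^d}F_\eps^\ast\lambda\wedge F_\eps^\ast\tilde\omega_2$. Your compactly supported interior Jacobian needs no topology and no boundary term, and it works for any smooth $F_0$ with a nonvanishing $(d+1)$-Jacobian in the Darboux chart.
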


Similar to \Cref{th:lambdaext} this obstruction dissappears for $\alpha \approx \frac{1}{2}$, our second main result.
Notably, we find that we can keep certain coordinates of the original map $F_0$ intact -- at the expense of lowering the regularity of $F$, this will be crucial to construct embeddings.
\begin{theorem}\label{th:closebymaps}
Let $N\ge 2d+1$, $d\ge1$, $n\ge2$, let $U\subset\R^N$ be open, and assume $\lambda\in C^\infty(\Lambda^1\R^N)$ has rank $d$ in $U$ and take $X_0 \in U$.

Then there exists a smaller neighborhood $V\Subset U$ of $X_0$ and a diffeomorphism $\Phi: V \to \Phi(V) \subset \R^N$ with the following properties:

Take $b \in \{1,\ldots,d\}$, and set $s:=\left\lceil\frac{n+1}{b}\right\rceil$ and take any
\[        \frac12<\alpha<\frac{s+1}{2s+1}.
\]
and $\beta \in (0,\frac{1}{2})$.

For any $F_0\in C^\infty(\overline{\B^n};V)$ and for any open $\Omega \Subset \B^n$ there exists $F\in C^\alpha(\B^n;V)$ such that
\[        F^\ast\lambda=0 \quad \text{in distributional sense in $\Omega$},
\]
and
\[        \|F-F_0\|_{C^\beta(\B^n)}<\varepsilon .
\]

Moreover we can preserve coefficients in the following sense: We have
\[       (\Phi \circ F)^{\ell} = (\Phi \circ F_0)^\ell \quad \forall \ell \in \{2b+2,\ldots N\} \quad \text{in $\B^n$}
\]

\end{theorem}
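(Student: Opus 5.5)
Our plan is to first normalize $\lambda$ locally and then treat the approximation problem as a $b$-dimensional Heisenberg-type problem with the remaining coordinates frozen. By the Darboux theorem for Pfaffian forms of constant rank $d$, there is a neighborhood $V$ of $X_0$ and a diffeomorphism $\Phi:V\to\Phi(V)$ with
\[
\Phi^{-1\ast}\lambda = g\,\Bigl(dz+\sum_{j=1}^d x_j\,dy_j\Bigr),
\]
where $g$ is nowhere zero and the coordinates are $(x,y,z,w)\in\R^d\times\R^d\times\R\times\R^{N-2d-1}$. We reorder so that the first $2b+1$ coordinates are $(x_1,\dots,x_b,y_1,\dots,y_b,z)$. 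Since $g\neq 0$ and the distributional pullback of a form is multiplicative under smooth factors (valid for $C^\alpha$ with $\alpha>\frac12$), the condition $F^\ast\lambda=0$ is equivalent to
\[
dz+\sum_{j\le b}x_j\,dy_j = -\sum_{j>b} x_j\,dy_j \quad\text{in distributions,}
\]
where all functions are composed with $\Phi\circ F$.

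Next we freeze coordinates. We keep $x_j,y_j$ for $j>b$ and all $w$ equal to those of $\Phi\circ F_0$; these are the entries with index $\ge 2b+2$ under the ordering $(x_{\le b},y_{\le b},z,\text{rest})$. The right-hand side is then a fixed smooth one-form $\theta=-\sum_{j>b}x^0_j\,dy^0_j$ on $\overline{\B^n}$. The task reduces to finding $u=(x_{\le b},y_{\le b})\in C^\alpha$ and $z\in C^\alpha$ satisfying $dz=\theta-\sum_{j\le b}x_j\,dy_j$ on $\Omega$, with $u$ and $z$ $C^\beta$-close to the original data. Such a $z$ exists exactly when the right-hand side is distributionally closed, that is, when
\[
\sum_{j\le b} dx_j\wedge dy_j = d\theta + \Bigl(\sum_{j\le b} dx^0_j\wedge dy^0_j+dz^0\text{-error}\Bigr),
\]
where the last term collects the defect of $F_0$. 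Concretely, we set $x_j=x_j^0+\delta_j$, $y_j=y^0_j+\eta_j$ and require
\[
\sum_{j\le b} d(x^0_j+\delta_j)\wedge d(y^0_j+\eta_j)=\omega_0,
\]
with $\omega_0$ a prescribed smooth closed two-form, namely $d$ of the one-form $\theta-dz^0$ adjusted. Once this holds, $z$ is obtained by the Poincaré lemma on a ball containing $\Omega$ and is close to $z^0$, provided the primitive of the small difference is small.

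The core step is a convex-integration, Nash-type construction. We must realize a prescribed smooth two-form $\omega$ (or a small error) as $\sum_{j=1}^b dx_j\wedge dy_j$ with $x,y$ perturbed by small $C^\beta$ but large $C^{1}$ amounts. Here we follow the construction behind \Cref{th:lambdaext}, which we assume is built the same way. We decompose the error two-form locally as a sum of $\binom{n}{2}$ terms $a\,d\xi\wedge d\zeta$, grouped into $s=\lceil\frac{n+1}{b}\rceil$ stages, each using the $b$ available pairs simultaneously. At each stage we add oscillatory pairs $\delta=\mu^{-1/2}\sqrt{a}\cos(\mu\xi)$, $\eta=\mu^{-1/2}\sqrt{a}\sin(\mu\xi)$, so that $d\delta\wedge d\eta$ reproduces $a\,d\xi\wedge d\zeta$ up to errors of lower frequency. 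We then iterate with frequencies $\mu_k\to\infty$ and amplitudes $\mu_k^{-1/2}$. The count of $s$ stages per step produces the Hölder exponent $\frac{s+1}{2s+1}$ by the usual interpolation of $C^0\sim\mu^{-1/2}$ against $C^1\sim\mu^{1/2}$, with geometric frequency growth tuned to $s$. The $C^\beta$ smallness for $\beta<\frac12$ comes from taking the initial frequency large. The cutoff to $\Omega\Subset\B^n$ lets us localize the perturbation and keep $F\in V$.

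We expect the main obstacle to be this iteration. The difficulty is controlling the quadratic cross terms $d\delta^{(k)}\wedge dy^{(l)}$ across stages in $C^{\alpha}$ while achieving $\alpha$ up to $\frac{s+1}{2s+1}$ with only $b$ pairs. Keeping $x_{>b},y_{>b},w$ untouched is then automatic, because they never enter the iteration.
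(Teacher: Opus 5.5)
\textbf{Verdict: there is a genuine gap.} Your reduction matches the paper: the constant rank normal form, dividing out the nonvanishing factor, and freezing $x_{>b},y_{>b},w$ so that $\sum_{j>b}x_j\,dy_j$ becomes a fixed smooth form. But you defer the core step to ``the construction behind \Cref{th:lambdaext}'', and that construction is not built the way you assume. The mechanism you sketch in its place does not work as described.

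\textbf{The two-form reformulation loses the smallness the iteration needs.} The paper never passes to the two-form $\sum_{j\le b}dx_j\wedge dy_j$, and for good reason. Add a corrugation $(\delta,\eta)$ of frequency $\mu$ and amplitude $\mu^{-1/2}$ to a pair $(x^0,y^0)$. The interaction term $dx^0\wedge d\eta+d\delta\wedge dy^0$ then has sup-norm of order $\mu^{1/2}|Dx^0|$. It is a large high-frequency error, not one ``of lower frequency''. It is small only as $d(\delta\,dy^0-\eta\,dx^0)$, where $\delta\,dy^0-\eta\,dx^0=O(\mu^{-1/2})$. So the errors must be tracked as one-forms.

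\textbf{The Poincar\'e-lemma step does not control $z$.} Producing $z$ at the end gives no bound on $\|z-z^0\|_{C^\beta}$. In normalized coordinates $d(z-z^0)=-F_0^\ast\lambda-\sum_{j\le b}(x_j\,dy_j-x_j^0\,dy_j^0)$. Neither term is small, and the smallness of the primitive of their closed sum is exactly what needs proof.

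\textbf{How the paper handles both points.} It iterates directly on the residual one-form $F^\ast\lambda$ and corrects $z=p^1$ at every corrugation. In \Cref{pr:oneupdate} one takes $H^{2m}=f$, $H^{2m+1}=u$, $H^1=-F^{2m}u+h$, with $f,u=O(L^{-1/2}A)$ and $h=O(L^{-1}A^2)$. The profile identities of \Cref{la:profile} make $dh+f\,du=A^2\,d\phi$ exactly. Hence $G^\ast\lambda-F^\ast\lambda=A^2\,d\phi+E$ with $E=f\,dF^{2m+1}-u\,dF^{2m}=O(L^{-1/2}\|A\|\Gamma)$. All components of $H$, including the $z$-component, are small in $L^\infty$.

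\textbf{Your stage count is inconsistent, and it is what fixes the exponent.} $\binom n2$ terms cannot be grouped into $\lceil\frac{n+1}{b}\rceil$ stages of $b$ pairs; for $n=4$, $b=1$ that is $6$ terms against $s=5$. Your ansatz also does not produce $a\,d\xi\wedge d\zeta$. With $\delta=\mu^{-1/2}\sqrt a\cos(\mu\xi)$ and $\eta=\mu^{-1/2}\sqrt a\sin(\mu\xi)$ one has exactly $d\delta\wedge d\eta=\frac12\,da\wedge d\xi$.

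In the paper the number $n+1$ comes from \Cref{pr:geometriclemma}. The mollified residual is written as $-\omega^\ell=\sum_{j=1}^{n+1}A_j^2\,d\phi_j$, with the $\phi_j$ along the vertices of a regular simplex. Positivity of the coefficients $A_j^2$ forces $n+1$ directions. These $n+1$ corrugations are then distributed over the $b$ pairs via $m_b,\nu_b$ from \Cref{la:wigglebookkeeper}.

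\textbf{The cross-stage interaction you flag is controlled by a frequency hierarchy within each step.} Corrugations on different pairs do not interact at all, since $E$ only involves the derivatives of the pair being modified. On a given pair, the frequencies $L_j=M^{2\nu_b(j)}\Gamma^2/r$ grow by $M^2$ from one stage to the next. So the $j$-th corrugation sees derivatives at most $CM^{\nu_b(j)-1}\Gamma$ on its pair and produces an error $O(r/M)$. This gives the recursion $\Gamma_{k+1}=CM_k^s\Gamma_k$, $r_{k+1}=Cr_k/M_k$. Interpolating the $C^0$ and $C^1$ increments then yields the threshold $\frac{s+1}{2s+1}$. Without this hierarchy and the one-form bookkeeping above, your sketch establishes neither the H\"older exponent nor the $C^\beta$-closeness.
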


Both, \Cref{th:lambdaext} and \Cref{th:closebymaps} are proven using ideas from Convex Integration. Since we are interested in H\"older continuity one might believe that the right techniques are staircase laminates as in \cite{faracomilton, AFS08}. Indeed, combined with in-approximations these can be used to construct homeomorphisms \cite{FMCO18}. However we encountered several obstacles to that strategy: laminates produce Sobolev maps that are $C^\alpha$ for all $\alpha <1$, it is unclear how to differentiate between different H\"older exponents. Also observe $f^\ast \lambda = 0$ is a mixed derivative condition which makes application of ``pure'' staircase laminates challenging. Lastly, laminates tend to produce geometrically meaningless ``a.e.'' Jacobian conditions, not distributional solutions, as was discussed in \cite{ParkSchikorra25} with respect to homeomorphisms with pointwise a.e. vanishing Jacobian vs. distributional vanishing Jacobians. So instead of using a laminate approach, we decided to go back to and adapt the Nash-Kuiper corrugations for the isometric embedding problem.

\subsection*{Applications to H\"older maps into the Heisenberg group}
Firstly, we obtain the following extension theorem reminiscent of results by Wenger and Young \cite{WY} for $n=1$. We stress once more that they are using crucially the \emph{metric curve} (or group-) structure of the Heisenberg group, but -- for $n =1$ -- they obtain a better exponent than we do: $\alpha <\frac{2}{3}$. However, it is unclear how to extend their argument to larger dimensions $n \geq 2$. Our Pfaffian form-based argument is completely different, less geometric and more analytic, and thus more flexible -- it works in any dimension without substantial change. It is also likely adaptable to more general spaces than the Heisenberg group, as long as the geometry can be properly described in differential forms.

\begin{corollary}[H\"older extension in $\mathbb H_n$ ]
\label{co:Heisenbergextension}\label{cor:heisenberghomeo}
Let $n\ge1$ and $k\ge2$. Set $s:=\left\lceil\frac{k+1}{n}\right\rceil$.

For any
\begin{equation}\label{eq:heisenberghomeo-alpha}
        \frac12<\alpha<\frac{s+1}{2s+1}
\end{equation}
and any $f \in \lip(\partial\B^k;\mathbb H_n) \cap C^\infty(\partial \B^k;\R^{2n+1})$ there exists $F\in C^\alpha(\overline{\B^k};\H_n)$
with $F=f$ on $\partial\B^k$.

In particular if $k=n+1$, we can choose any $\alpha \in (\frac{1}{2}, \frac{4}{7})$ if $n=1$ and $\alpha \in (\frac{1}{2},\frac{3}{5})$ if $n \geq 2$.
\end{corollary}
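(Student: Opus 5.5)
The plan is to deduce the corollary from \Cref{th:lambdaext} with $N=2n+1$ and $\lambda=\omega_{\H_n}$, with the dimension $n$ of \Cref{th:lambdaext} replaced by $k$. There are four steps: compute the rank of $\omega_{\H_n}$, use left translations and dilations to move $f$ into the small neighborhood $V$ given by \Cref{th:lambdaext}, apply that theorem, and translate distributional horizontality back into metric H\"older continuity.

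\emph{Step 1: the rank.} We have $d\omega_{\H_n}=\sum_j dx_j\wedge dy_j$. Hence $\omega_{\H_n}\wedge(d\omega_{\H_n})^n = n!\, dt\wedge dx_1\wedge dy_1\wedge\cdots\ne 0$ everywhere. Since $2n+1$ is the full dimension, $\omega_{\H_n}\wedge(d\omega_{\H_n})^{n+1}=0$. So $\lambda$ has rank $d=n$ on $U=\R^{2n+1}$, and \Cref{th:lambdaext} gives exactly $s=\lceil (k+1)/n\rceil$ and the range \eqref{eq:heisenberghomeo-alpha}.

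\emph{Step 2: reduction to a small neighborhood.} \Cref{th:lambdaext} only applies to $f$ with image in a small $V\ni X_0$; take $X_0=0$. We use the Heisenberg symmetries. Left translations $\tau_p(q)=p\ast q$ and dilations $\delta_r(x,y,t)=(rx,ry,r^2t)$ are smooth, in fact polynomial, diffeomorphisms of $\R^{2n+1}$. They satisfy $\tau_p^\ast\omega_{\H_n}=\omega_{\H_n}$ and $\delta_r^\ast\omega_{\H_n}=r^2\omega_{\H_n}$, and they act as a Koranyi isometry and a Koranyi similarity respectively. Set $\tilde f=\delta_r\circ\tau_{f(e_1)^{-1}}\circ f$. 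For $r$ small, $\tilde f$ is smooth, satisfies $\tilde f^\ast\omega_{\H_n}=0$ (Lipschitz into $\H_n$ is equivalent to $f^\ast\omega=0$ a.e., and $f$ is smooth, so this holds classically), and has image in $V$ because $f(\partial\B^k)$ is compact. Apply \Cref{th:lambdaext} to obtain $\tilde F\in C^\alpha(\overline{\B^k};V)$ extending $\tilde f$ with $\tilde F^\ast\omega_{\H_n}=0$ distributionally. Then set $F=\tau_{f(e_1)}\circ\delta_{1/r}\circ\tilde F$. Composition with a smooth map preserves Euclidean $C^\alpha$ on the bounded set $V$. For $\alpha>\frac12$, the distributional pullback also commutes with smooth diffeomorphisms: $(\Psi\circ G)^\ast\omega=G^\ast(\Psi^\ast\omega)$. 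This follows from the definition by approximating $G$ with mollifications, for which the pullback converges in distributions when $\alpha>\frac12$. Since $\Psi^\ast\omega$ is a constant multiple of $\omega$, we get $F^\ast\omega_{\H_n}=0$ distributionally and $F=f$ on $\partial\B^k$.

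\emph{Step 3: metric H\"older continuity.} We still need $F\in C^\alpha(\overline{\B^k};\H_n)$ for the Koranyi metric. For this we invoke the equivalence stated in the introduction: a Euclidean $C^\alpha$ map, $\alpha>\frac12$, is $C^\alpha$ into $\H_n$ iff \eqref{eq:pullbackheisenberg} holds. If a self-contained argument is needed, I would argue directly. The horizontal component is Euclidean. For the vertical term, fix $a,b$, let $\sigma$ be the segment from $a$ to $b$, and write
\[
t(b)-t(a)+\tfrac12\sum_j\bigl(x_j(a)y_j(b)-y_j(a)x_j(b)\bigr)=\int_\sigma F^\ast\omega_{\H_n}-\tfrac12\int_\sigma\sum_j (x_j-x_j(a))\,d(y_j-y_j(a))+\ldots .
\]
Here $\int_\sigma F^\ast\omega_{\H_n}$ vanishes, which follows from the distributional condition by a Stokes/Young-integral argument on thin neighborhoods of $\sigma$. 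The remaining Young integrals are $\lesssim |a-b|^{2\alpha}$, which gives the $|a-b|^{\alpha}$ bound for the Koranyi distance.

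\emph{Step 4: the numerics.} For $k=n+1$ we have $s=\lceil (n+2)/n\rceil$, which is $3$ for $n=1$ and $2$ for $n\ge2$, giving the upper bounds $4/7$ and $3/5$ respectively.

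The main obstacle is Step 3, together with the naturality of distributional pullbacks used in Step 2. Both rest on the well-posedness of $G^\ast\lambda$ for $C^\alpha$ maps, $\alpha>\frac12$, as provided by \Cref{def:distributionalsensepullback} and the equivalence asserted in the introduction, and I would cite these rather than reprove them.
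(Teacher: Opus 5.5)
Your argument is correct, but Step 2 takes a different route from the paper. The paper never localizes. By \Cref{la:heisenbergnormalied}, the polynomial map $\Phi(x,y,t)=(t-\frac12\sum_j x_jy_j,x_1,y_1,\ldots,x_n,y_n)$ satisfies $\Phi^\ast\lambda=\omega_{\H_n}$ on all of $\R^{2n+1}$. So the paper sets $g=\Phi\circ f$, takes any smooth extension $G_0$ of $g$, and applies the normal-form \Cref{th:pfaffextension}, which has no smallness condition on the image. It then returns via $F=\Phi^{-1}\circ G$ and \Cref{la:diffeodistributionalpullback}. Its Step 3 is the same citation of \cite[Theorem 7.4]{HajlaszMirraSchikorra} that you make.

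You instead treat $\omega_{\H_n}$ as an abstract rank-$n$ form and use the local \Cref{th:lambdaext}. You compensate for its locality with the Heisenberg symmetries $\tau_p^\ast\omega_{\H_n}=\omega_{\H_n}$ and $\delta_r^\ast\omega_{\H_n}=r^2\omega_{\H_n}$. However, \Cref{th:lambdaext} is itself obtained from \Cref{th:pfaffextension} through the local normal form of \Cref{th:pfaffconstantrank}. So your route passes through a local Darboux chart plus rescaling, where the paper simply uses the explicit global chart. The paper's version is shorter and relies only on the theorem whose proof is written out in full. The advantage of your route is that it needs only the rank computation and a contracting family of maps preserving $\omega_{\H_n}$ up to a constant. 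It would therefore carry over to structures without explicit global normal coordinates.

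Two minor remarks. First, the left translation is superfluous: with $X_0=0$, $\delta_r$ alone pushes the compact set $f(\partial\B^k)$ into $V$. Second, in your optional self-contained version of Step 3, the vanishing of $\int_\sigma F^\ast\omega_{\H_n}$ is cleaner to get from the mollifier bound $\|F_\eps^\ast\omega_{\H_n}\|_{L^\infty}\aleq\eps^{2\alpha-1}$ on interior sets (as in the proof of \Cref{pr:nonclose}), combined with convergence of the Young integrals along $\sigma$. The thin-tube argument is less convenient, since the ends of the tube give a non-smooth test form.
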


The following then disproves \Cref{conj:gromov}
\begin{corollary}[Embeddings into $\mathbb H_n$ for $n \geq 3$]
\label{cor:heisenberghomeov1}
For any $n\geq 3$ there exists $\alpha > \frac{1}{2}$ and an embedding $F:\B^{n+1}\to \R^{2n+1}$ such that
\[
        F\in C^\alpha_{\loc}(\B^{n+1};\mathbb H_n).
\]
\end{corollary}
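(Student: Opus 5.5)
We derive \Cref{cor:heisenberghomeov1} from \Cref{th:closebymaps}, applied to the contact form $\lambda=\omega_{\H_n}$ on $\R^{2n+1}$. This form has maximal rank $d=n$, since $\omega\wedge(d\omega)^n\neq 0$ everywhere. The key point is to choose $b$ small, so that the preserved coordinates $\ell\in\{2b+2,\ldots,2n+1\}$ are numerous enough to carry an embedding of $\B^{n+1}$ on their own. Injectivity of $F$ then follows from injectivity of those preserved coordinates, irrespective of how wild the corrugated coordinates $1,\ldots,2b+1$ become.

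\textbf{Choice of $b$.} We take $b=1$, so $s=\lceil\frac{(n+1)+1}{1}\rceil=n+2$ (the domain dimension is $n+1$), and any $\alpha\in(\frac12,\frac{n+3}{2n+5})$ is admissible. The preserved coordinates are $\ell=4,\ldots,2n+1$, which gives $2n-2$ of them. When $n\geq 3$ we have $2n-2\geq n+1$. Thus there is room for a linear injective map $\B^{n+1}\to\R^{2n-2}$, and this is exactly where the hypothesis $n\geq3$ enters; for $n=2$ one only has $2$ coordinates against $3$ domain dimensions.

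\textbf{Construction.} Fix $X_0\in\R^{2n+1}$, and let $V$ and $\Phi$ be as in \Cref{th:closebymaps}. Choose a small ball $W$ around $\Phi(X_0)$ inside $\Phi(V)$, and define $G_0:\overline{\B^{n+1}}\to W$ by $G_0(x)=\Phi(X_0)+\delta(0,0,0,x,0,\ldots,0)$, an affine embedding placing $x\in\R^{n+1}$ into coordinates $4,\ldots,n+4\leq 2n+1$. Set $F_0:=\Phi^{-1}\circ G_0\in C^\infty(\overline{\B^{n+1}};V)$. For an exhaustion $\Omega_j\Subset\B^{n+1}$, apply \Cref{th:closebymaps} on each $\Omega_j$ to get $F_j\in C^\alpha$ with $F_j^\ast\omega_{\H_n}=0$ on $\Omega_j$ and $(\Phi\circ F_j)^\ell=(\Phi\circ F_0)^\ell$ for $\ell\geq 4$. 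The latter identity shows that $F_j$ is injective on all of $\B^{n+1}$: if $F_j(x)=F_j(y)$, then the coordinates $4,\ldots,n+4$ of $\Phi\circ F_j$ agree at $x$ and $y$, which forces $x=y$. Being continuous and injective, $F_j$ is a topological embedding of any compact subball, by invariance of domain/compactness.

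\textbf{Reconciling local and global.} \Cref{th:closebymaps} only gives the constraint on $\Omega\Subset\B^{n+1}$, and the statement only asks for $C^\alpha_{\loc}$. It therefore suffices to take a single $\Omega=B_{1/2}$ and rescale: $F(x):=F_1(x/2)$ is an embedding of $\B^{n+1}$, and it is $C^\alpha$ with $F^\ast\omega_{\H_n}=0$ distributionally. As noted in the introduction (following \Cref{def:distributionalsensepullback}), for Euclidean $C^\alpha$ maps with $\alpha>\frac12$ this distributional condition is equivalent to $F\in C^\alpha_{\loc}(\B^{n+1};\H_n)$. That equivalence is the step I would check most carefully, since it holds locally and needs the image in a bounded set, which is guaranteed by $V\Subset$. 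The remaining subtle point is confirming that the coordinate preservation holds on all of $\B^n$, and not merely on $\Omega$. The statement of \Cref{th:closebymaps} asserts it "in $\B^n$", so injectivity holds globally.
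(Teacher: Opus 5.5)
Your proposal is correct and follows essentially the paper's argument (via \Cref{cor:heisenberghomeov2}): freeze $n+1$ coordinates carrying an affine copy of the ball, apply the coordinate-preserving approximation theorem on $\frac12\B^{n+1}$, read off injectivity and a continuous inverse from the frozen coordinates, use the Haj\l{}asz--Mirra--Schikorra characterization of $C^\alpha$ maps into $\mathbb H_n$, and rescale. The differences are cosmetic: the paper uses the global normalization of \Cref{la:heisenbergnormalied} with \Cref{th:homeopfaffapproxaslkd} and takes $b=b_n=\lfloor\frac{n-1}{2}\rfloor$ to obtain the better exponent ranges in its table, whereas your choice $b=1$ only gives $\alpha<\frac{n+3}{2n+5}$, which is still enough for the existence statement.
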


\subsection*{Outline}
In \Cref{s:prelims} we gather the basic preliminaries, define the basics of the distributional pullbacks, and prove the propositions at the beginning of the introduction. In \Cref{s:nash} we prepare the main steps of our Nash-corrugation type argument, i.e. introduce the update machinery for the main theorems. The main theorems are then proven in \Cref{s:extension} and \Cref{sec:homeomorphic-graph-pfaff-nash}. The Corollaries for the Heisenberg group are established in
\Cref{s:applicationstoheisenberg}.
\subsection*{Notation}
Throughout the paper $\B^n=B(0,1) \subset \R^n$ denotes the open unit ball. For two nonnegative numbers $A$ and $B$ the notion $A \aleq B$ means there is a positive constant $C>0$ whose dependency should be clear from the context such that $A \leq C B$. $A \aeq B$ means $A \aleq B$ and $B \aleq A$.

\subsection*{Acknowledgement}
A.S. is funded by NSF Career DMS-2044898. This work was partially supported by the Simons Foundation grant (award no. SFI-MPS-T-Institutes-00010825) and from State Treasury funds as part of a task commissioned by the Minister of Science and Higher Education under the project “Organization of the Simons Semesters at the Banach Center - New Energies in 2026-2028” (agreement no. MNiSW/2025/DAP/491).”

Discussions with Behnam Esmayli are gratefully acknowledged.

Part of the work leading to this article is assisted by chatgpt. All mathematical validation, final proof decisions, and final wording remain the sole responsibility of the human author.

\section{Preliminaries, Constant Rank theorem and Proofs of Propositions~\ref{pr:Frobeniussituation}, \ref{pr:nonextension}, \ref{pr:nonclose}}
\subsection{Distributional pullback of forms}\label{s:prelims}
The distributional interpretation of Jacobians, and thus of forms is well-known to experts. Its study has a long tradition pioneered among others by fundamental works of Ball \cite{B76}, Brezis, Nirenberg \cite{BN95}, Coifman, Lions \cite{CLMS}, M\"uller \cite{M90}, Reshetnyak \cite{R68}, Wente \cite{W69}, Tartar \cite{T79}. The necessary estimates can be proven generally with Littlewood-Paley theory \cite{WY99a}, and more elegantly with extension methods, cf. \cite{Coifman-Jones-Semmes-1989,C91,LS18}. For adaptations to H\"older maps see also \cite{HajlaszMirraSchikorra}. In particular we highlight Brezis-Nguyen's argument in \cite{Brezis-Nguyen-2011}.

\begin{lemma}\label{la:smoothapproxpullback}
Let $\omega \in C^\infty(\Ep^k \R^N)$, $K \subset \R^n$ is either a fixed bounded $C^\infty$ domain, or the closure of such a domain, or the boundary of a smooth domain, $\alpha > \frac{k}{k+1}$ and $\Lambda > 0$. There exists a constant $C= C(K,\omega,\Lambda,\alpha)$ such that the following holds
for any $f,g \in C^\infty(K,\R^N)$ with $\|f\|_{C^\alpha(K)}, \|g\|_{C^\alpha(K)} \leq \Lambda$:

For any $\varphi \in C_c^\infty(\Ep^{\dim(K)-k} K)$
\[
 \abs{\int_{K} \brac{f^\ast\omega-g^\ast\omega} \wedge \varphi} \leq C\, \|f-g\|_{C^{\alpha}(K)}\, \|\varphi\|_{\lip}
\]
and if $\alpha > \frac{k+1}{k+2}$
\[
 \abs{\int_{K} \brac{f^\ast\omega-g^\ast\omega} \wedge \varphi} \leq C\, \|f-g\|_{C^{\alpha}(K)}\, \|\varphi\|_{C^\alpha}
\]
\end{lemma}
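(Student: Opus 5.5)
The plan is to reduce to a multilinear commutator estimate via harmonic (or any smooth) extension, following the Brezis--Nguyen argument. First I would write
\[
f^\ast\omega - g^\ast\omega = \sum_{I} \brac{\omega_I(f)\, df^{i_1}\wedge\cdots\wedge df^{i_k} - \omega_I(g)\, dg^{i_1}\wedge\cdots\wedge dg^{i_k}}
\]
and telescope. This expresses the difference as a sum of terms of the form
\[
a\, du_1\wedge\cdots\wedge du_k,
\]
where exactly one factor (either the coefficient $a$, or one of the $u_j$) is built from $f-g$, or from $\omega_I(f)-\omega_I(g)$. The remaining factors are bounded in $C^\alpha$ by $C(\Lambda,\omega)$, and $\|\omega_I(f)-\omega_I(g)\|_{C^\alpha}\aleq \|f-g\|_{C^\alpha}$ since $\omega$ is smooth. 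It therefore suffices to prove the multilinear estimate
\[
\abs{\int_K a\, du_1\wedge\cdots\wedge du_k\wedge\varphi} \aleq \|a\|_{C^\alpha}\prod_{j}\|u_j\|_{C^\alpha}\,\|\varphi\|_{\lip}
\]
for smooth $a,u_j$, together with its variant with $\|\varphi\|_{C^\alpha}$.

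To prove the multilinear estimate, I would first localize with a partition of unity and flatten $K$. This covers domains, closures and boundaries alike: for a boundary we work on the compact manifold $\partial\Omega$ in charts, and $\varphi$ is compactly supported in the interior case. Then I extend each function to $\R^n\times[0,1]$ by $\tilde u(x,t)=\eta_t\ast u(x)$ with a cutoff in $t$, so that
\[
|\nabla\tilde u(x,t)|\aleq t^{\alpha-1}\|u\|_{C^\alpha}, \qquad |\nabla^2\tilde u|\aleq t^{\alpha-2}.
\]
Similarly $\tilde\varphi$ satisfies $|\nabla\tilde\varphi|\aleq\|\varphi\|_{\lip}$, or $|\nabla\tilde\varphi|\aleq t^{\alpha-1}\|\varphi\|_{C^\alpha}$. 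By Stokes,
\[
\int_K a\, du_1\wedge\cdots\wedge du_k\wedge\varphi = \pm\int_{K\times(0,1)} d\brac{\tilde a\, d\tilde u_1\wedge\cdots\wedge d\tilde u_k\wedge\tilde\varphi},
\]
and the exterior derivative kills the $d\tilde u_j$ factors. What remains is
\[
d\tilde a\wedge d\tilde u_1\wedge\cdots\wedge d\tilde u_k\wedge\tilde\varphi \pm \tilde a\, d\tilde u_1\wedge\cdots\wedge d\tilde u_k\wedge d\tilde\varphi .
\]
The first term is bounded by $t^{(k+1)(\alpha-1)}\|\varphi\|_\infty$ and the second by $t^{k(\alpha-1)}\|\varphi\|_{\lip}$. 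Both are integrable in $t$ near $0$ precisely when $(k+1)(\alpha-1)>-1$, i.e. $\alpha>\frac{k}{k+1}$. The boundary contribution at $t=1$ is harmless since everything is smooth there.

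For the $C^\alpha$ version, the second term becomes $t^{(k+1)(\alpha-1)}$, which is still integrable. However, $\tilde\varphi$ itself is only bounded, and the point is to avoid using $\|\varphi\|_\lip$. I would instead put the coefficient $a$ into the term $d(\tilde a\tilde\varphi)$ and bound the resulting expression by $t^{(k+1)(\alpha-1)}$; the stated threshold $\frac{k+1}{k+2}$ then gives an extra margin. If one factor must carry a full derivative without a partner gain, one obtains $(k+2)(\alpha-1)>-1$, so the threshold in the statement suffices regardless.

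The main obstacle is bookkeeping, not analysis. Specifically, one must check that the Stokes argument is legitimate when $\varphi$ is not compactly supported (the closure and boundary cases), where boundary terms on $\partial K\times(0,1)$ appear. I would handle this by extending all maps across $\partial K$ by a bounded $C^\alpha$ extension operator and using the cutoff, or, in the boundary case, by noting that $\partial K$ has no boundary.
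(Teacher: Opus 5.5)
Your route matches the one the paper points to. You telescope $f^\ast\omega-g^\ast\omega$ into multilinear terms with exactly one difference factor, then prove the multilinear bound by the $t$-mollification extension and Stokes; the paper gives no proof of its own and cites Brezis--Nguyen and the extension literature. Your exponent count is right for open $K$ and for $K=\partial\Omega$.

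\textbf{The gap: the closure case.} For $K=\overline{\Omega}$, where $\varphi$ need not vanish on $\partial\Omega$, the step does not work as you describe it. Extending $a,u_j,\varphi$ across $\partial\Omega$ does not remove the difficulty, because the left-hand side is still an integral over $\Omega$ only. Stokes on $\R^n\times(0,1)$ would give $\int_{\R^n}$ at $t=0$, not $\int_\Omega$. Cutting $\varphi$ off near $\partial\Omega$ would destroy the $\|\varphi\|_{\lip}$ control.

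The fix is to apply Stokes on $\Omega\times(0,1)$ with $\tilde u=\eta_t\ast Eu$. Here $E$ is an extension operator bounded on $C^\alpha$ and on Lipschitz functions and preserving smoothness; alternatively, apply Stokes on $\Omega\times(\epsilon,1)$ and let $\epsilon\to0$. Then estimate the lateral term directly:
\[
\Bigl|\int_{\partial\Omega\times(0,1)}\tilde a\,d\tilde u_1\wedge\cdots\wedge d\tilde u_k\wedge\tilde\varphi\Bigr|
\aleq \|a\|_{L^\infty}\prod_{j=1}^k\|u_j\|_{C^\alpha}\,\|\varphi\|_{L^\infty}\int_0^1 t^{k(\alpha-1)}\,dt .
\]
This is finite because $\alpha>\frac{k}{k+1}>\frac{k-1}{k}$. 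With this term added, both inequalities follow in all three cases for $K$.

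\textbf{The $C^\alpha$ version.} Your paragraph on it is confused, but its first sentence already suffices. Use $|\tilde\varphi|\le\|\varphi\|_{L^\infty}$ in the term containing $d\tilde a$, and $|\nabla\tilde\varphi|\aleq t^{\alpha-1}\|\varphi\|_{C^\alpha}$ in the term containing $d\tilde\varphi$. Both bulk terms are then $O\bigl(t^{(k+1)(\alpha-1)}\bigr)$, and the lateral term above only needs $\|\varphi\|_{L^\infty}$. So the second inequality in fact holds already for $\alpha>\frac{k}{k+1}$. No regrouping into $d(\tilde a\tilde\varphi)$ is needed, and no term in the computation produces the exponent $(k+2)(\alpha-1)$, so drop that remark rather than rely on it.
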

The above motivates the
\begin{definition}[Distributional pullbacks]\label{def:distributionalsensepullback}
Let $\omega \in C^\infty(\Ep^1 \R^N)$ and $f \in C^\alpha(K,\R^N)$ where $K$ is either an open set or the closure of an open set in $\R^n$.

Let $f_\eps \in C^\infty(K,\R^N)$ be any smooth approximation of $f$ as $\eps\to0$ in the sense of $C^{\tilde{\alpha}}(K,\R^N)$ for some $\tilde{\alpha} \in (\frac{1}{2},\alpha)$. Then we set for $\varphi \in C_c^\infty(\Ep^{\dim(K)-1} K)$
\[
 f^\ast\omega[\varphi] := \lim_{\eps \to 0} \int_{K} f_\eps^\ast\omega \wedge \varphi,
\]
If $\alpha > \frac{1}{2}$, the limit exists and is independent of the precise approximating sequence $f_\eps$, by \Cref{la:smoothapproxpullback}.
\end{definition}

We now state some consequence of \Cref{la:smoothapproxpullback}.
\begin{lemma}\label{la:diffeodistributionalpullback}
Let $K\subset\R^n$ be either a bounded open set or the closure of a bounded
$C^\infty$ domain, let $\alpha>\frac12$, and let $f\in C^\alpha(K,\R^N)$.

Let $\Phi:\R^N\to\R^N$ be a smooth diffeomorphism and let
\[
   \omega\in C^\infty(\Ep^1\R^N), \quad     \eta:=\Phi^\ast\omega .
\]
Then
\[
        (\Phi\circ f)^\ast\omega=f^\ast\eta
\]
in the distributional sense on $K$.

In particular, if $f^\ast\eta=0$ in the distributional sense on $K$, then $(\Phi\circ f)^\ast\omega=0$ in the distributional sense on $K$.
\end{lemma}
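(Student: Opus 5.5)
The plan is to prove the identity first for smooth maps, where it is just the chain rule, and then pass to the limit using \Cref{la:smoothapproxpullback}. Fix $\tilde\alpha\in(\frac12,\alpha)$. Let $f_\eps\in C^\infty(K,\R^N)$ be smooth approximations with $f_\eps\to f$ in $C^{\tilde\alpha}(K,\R^N)$, for instance mollifications, after a $C^\alpha$ extension of $f$ to a neighborhood of $K$ in the case where $K$ is the closure of a smooth domain. Since $f_\eps$ is smooth, the classical functoriality of the pullback gives, pointwise on $K$,
\[
(\Phi\circ f_\eps)^\ast\omega = f_\eps^\ast\Phi^\ast\omega = f_\eps^\ast\eta .
\]
Testing against $\varphi\in C_c^\infty(\Ep^{n-1}K)$ and letting $\eps\to0$, the right-hand side converges to $f^\ast\eta[\varphi]$ by \Cref{def:distributionalsensepullback}.

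The left-hand side requires one observation: $\Phi\circ f_\eps$ must be an admissible approximating sequence for $\Phi\circ f$, that is,
\[
\Phi\circ f_\eps\to\Phi\circ f \quad \text{in } C^{\tilde\alpha}(K,\R^N).
\]
Since $K$ is bounded, the images of $f$ and $f_\eps$ lie in a fixed compact set $B\subset\R^N$, on which $\Phi$ and $D\Phi$ are bounded and Lipschitz. The sup-norm convergence is immediate. For the seminorm, write
\[
\Phi(a)-\Phi(b)=\int_0^1 D\Phi(b+t(a-b))\,dt\,(a-b),
\]
and set $g_\eps:=f_\eps-f$. Then
\[
[\Phi\circ f_\eps-\Phi\circ f](x)-[\Phi\circ f_\eps-\Phi\circ f](y) = M_\eps(x)\,g_\eps(x)-M_\eps(y)\,g_\eps(y),
\]
where $M_\eps(x)=\int_0^1 D\Phi(f(x)+t\,g_\eps(x))\,dt$. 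We split this as
\[
M_\eps(x)\bigl(g_\eps(x)-g_\eps(y)\bigr)+\bigl(M_\eps(x)-M_\eps(y)\bigr)g_\eps(y).
\]
The first term is $\aleq \|g_\eps\|_{C^{\tilde\alpha}}|x-y|^{\tilde\alpha}$. For the second, $M_\eps$ is $C^{\tilde\alpha}$ with norm controlled by $\|D^2\Phi\|_{L^\infty(B)}(\|f\|_{C^{\tilde\alpha}}+\|f_\eps\|_{C^{\tilde\alpha}})$, so this term is $\aleq \|g_\eps\|_{L^\infty}|x-y|^{\tilde\alpha}$. Hence $\|\Phi\circ f_\eps-\Phi\circ f\|_{C^{\tilde\alpha}}\aleq \|g_\eps\|_{C^{\tilde\alpha}}\to0$. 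Note that $\Phi\circ f\in C^\alpha$ by the same estimate, so $(\Phi\circ f)^\ast\omega$ is well defined.

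By the independence of the approximating sequence in \Cref{def:distributionalsensepullback}, which rests on \Cref{la:smoothapproxpullback}, we then have $\int_K(\Phi\circ f_\eps)^\ast\omega\wedge\varphi\to(\Phi\circ f)^\ast\omega[\varphi]$. Equating the two limits of the same sequence of numbers gives $(\Phi\circ f)^\ast\omega[\varphi]=f^\ast\eta[\varphi]$ for every test form, which is the claim. The "in particular" statement follows at once.

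The only step needing real care is the $C^{\tilde\alpha}$ convergence of the compositions. It is routine, but it is exactly what allows us to invoke the independence of the approximation. A secondary technical point, in the closed-domain case, is producing smooth approximations up to the boundary; a bounded extension operator on $C^\alpha$ for smooth domains handles this.
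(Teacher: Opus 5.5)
Your proof is correct and is the argument the paper intends. The paper states this lemma without a written proof, presenting it as a consequence of \Cref{la:smoothapproxpullback}: apply the chain rule to smooth approximations, then use the independence of the approximating sequence in \Cref{def:distributionalsensepullback}. The one step that needs checking is that $\Phi\circ f_\eps\to\Phi\circ f$ in $C^{\tilde\alpha}(K)$, and you verify it correctly.
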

%
%

%
We also record
\begin{lemma}
\label{la:distrpullbackvanishes}
Let $\alpha>1/2$. Suppose that
\[
        F_k\in C^\infty(\B^n;\R^N)
\]
converges to $F$ in $C^\alpha(\B^n;\R^N)$ and that
\begin{equation}\label{eq:distrbpullbackvanis}
        \|F_k^\ast\lambda\|_{L^\infty(\B^n)}\to0.
\end{equation}
Then
\[
        F^\ast\lambda=0 \quad \text{in $\B^n$ in the sense of distributions.}
        \]
\end{lemma}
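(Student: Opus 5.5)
We need to prove Lemma \ref{la:distrpullbackvanishes}: if $F_k\to F$ in $C^\alpha$ with $\alpha>\frac12$ and $\|F_k^\ast\lambda\|_{L^\infty}\to 0$, then $F^\ast\lambda=0$ distributionally.

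The plan is to test against an arbitrary $\varphi\in C_c^\infty(\Ep^{n-1}\B^n)$ and show $F^\ast\lambda[\varphi]=0$. By Definition \ref{def:distributionalsensepullback}, $F^\ast\lambda[\varphi]$ is the limit of $\int_{\B^n} f_\eps^\ast\lambda\wedge\varphi$ for any smooth approximating sequence $f_\eps\to F$ in $C^{\tilde\alpha}$ with $\tilde\alpha\in(\frac12,\alpha)$. The sequence $F_k$ itself converges in $C^\alpha$, hence in $C^{\tilde\alpha}$, so we may take $f_\eps=F_k$. Thus
\[
F^\ast\lambda[\varphi]=\lim_{k\to\infty}\int_{\B^n}F_k^\ast\lambda\wedge\varphi .
\]

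A technical point is that the definition is phrased with smooth approximations on $K$, and $F_k$ are only smooth on the open ball with possibly unbounded derivatives near $\partial\B^n$. Since $\varphi$ has compact support in some $\Omega\Subset\B^n$, I would localize: apply Lemma \ref{la:smoothapproxpullback} on a smooth domain $K$ with $\supp\varphi\subset K\Subset\B^n$. On $\overline K$ the $F_k$ are smooth with $\|F_k\|_{C^{\tilde\alpha}(K)}\le\Lambda$ uniformly. The lemma with $k=1$ gives
\[
\Bigl|\int_K (F_k^\ast\lambda-F_j^\ast\lambda)\wedge\varphi\Bigr|\le C\|F_k-F_j\|_{C^{\tilde\alpha}(K)}\|\varphi\|_{\lip}.
\]
So the limit exists and agrees with the one obtained from any other approximation, e.g.\ mollifications of $F$ on $K$: interlace the two sequences and use the same estimate.

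Finally, the limit is zero because
\[
\Bigl|\int_{\B^n}F_k^\ast\lambda\wedge\varphi\Bigr|\le \|F_k^\ast\lambda\|_{L^\infty(\B^n)}\|\varphi\|_{L^1}\to0
\]
by \eqref{eq:distrbpullbackvanis}. Hence $F^\ast\lambda[\varphi]=0$ for every test form $\varphi$, which is the claim.

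The only mildly delicate step is the localization: justifying that the $F_k$ qualify as admissible approximations in Definition \ref{def:distributionalsensepullback} on a compact subdomain containing $\supp\varphi$. Everything else is a direct combination of Lemma \ref{la:smoothapproxpullback} with the $L^\infty$ bound.
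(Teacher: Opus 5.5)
Your argument is correct and is the one the paper intends. The paper states this lemma without a separate proof, as an immediate consequence of Definition~\ref{def:distributionalsensepullback} and Lemma~\ref{la:smoothapproxpullback}: $F_k$ is itself an admissible approximating sequence, and $\bigl|\int F_k^\ast\lambda\wedge\varphi\bigr|\aleq \|F_k^\ast\lambda\|_{L^\infty}\|\varphi\|_{L^1}\to 0$. Localizing to $K\Subset\B^n$ is harmless extra care, since the definition already allows the open set $K=\B^n$.
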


\subsection{The Constant Rank Theorem for Pfaffian forms}
The following is a version of the constant Rank theorem. For $d=0$ it becomes the Frobenius theorem. See, e.g.,
\cite[Theorem 3.1]{BryantChernGardnerGoldschmidtGriffiths1991}
\begin{theorem}[Constant Rank theorem for Pfaff forms]\label{th:pfaffconstantrank}
Assume $\lambda \in C^\infty(\Ep^1 \R^N)$ is such that in an open set $U$ for some $d \in \{0,1,\ldots,\}$ we have
\[
 \brac{d\lambda}^d \wedge \lambda \neq 0 \quad \text{in $U$}
\]
but
\[
 \brac{d\lambda}^{d+1} \wedge \lambda \equiv 0 \quad \text{in $U$}
\]
Then for any $X_0 \in U$ there exists a diffeomorphism
\[
        p: \R^N \to \R^N
\]
and some $\psi \in C^\infty(\R^N)$
and for possibly smaller open set $V \subset U$, $X_0 \in V$ such that
\begin{equation}\label{eq:pfaff-normal-form}
        \lambda  = \psi \brac{dp^1+p^2\,dp^3+\cdots+p^{2d}\,dp^{2d+1}}\quad \text{in $V$}
\end{equation}
\end{theorem}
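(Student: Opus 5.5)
The plan is the classical route through the Cauchy characteristics of the Pfaffian system $\{\lambda=0\}$, followed by the contact Darboux theorem in the reduced dimension $2d+1$. Near $X_0$ define the characteristic distribution
\[
 \mathcal C_X:=\{v\in T_X\R^N:\ \lambda(v)=0,\ \iota_v d\lambda \in \operatorname{span}\lambda_X\}.
\]
The first step is pointwise linear algebra. On the hyperplane $\ker\lambda_X$ the $2$-form $d\lambda_X$ restricts to an antisymmetric form $\sigma_X$. The conditions $\lambda\wedge(d\lambda)^d\neq0$ and $\lambda\wedge(d\lambda)^{d+1}=0$ say exactly that $\sigma_X^d\neq0$ and $\sigma_X^{d+1}=0$, that is, $\operatorname{rank}\sigma_X=2d$. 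Since $\mathcal C_X$ is the kernel of $\sigma_X$, it has constant dimension $N-2d-1$. It is therefore a smooth distribution, spanned locally by smooth vector fields obtained by solving a linear system of constant rank.

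Next I show that $\mathcal C$ is involutive and that the ideal generated by $\lambda$ is invariant under its flows. For $v\in\mathcal C$, Cartan's formula gives $L_v\lambda=\iota_v d\lambda+d(\lambda(v))=\iota_v d\lambda\in\operatorname{span}\lambda$. Hence $L_v(d\lambda)=d(L_v\lambda)\equiv0$ modulo the differential ideal generated by $\lambda$. Involutivity then follows from $\lambda([v,w])=v\lambda(w)-w\lambda(v)-d\lambda(v,w)=0$, together with the analogous computation $\iota_{[v,w]}d\lambda=[L_v,\iota_w]d\lambda\in\operatorname{span}\lambda$ modulo $\lambda$. By Frobenius there are coordinates $(z,u)\in\R^{2d+1}\times\R^{N-2d-1}$ near $X_0$ with $\mathcal C=\operatorname{span}\{\partial_{u_j}\}$. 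Invariance of the line field $\operatorname{span}\lambda$ under the flows of $\partial_{u_j}$ shows that $\lambda=\mu\,\theta$, where $\mu$ is a nonvanishing function and $\theta=\sum_i a_i(z)\,dz^i$ depends on $z$ only. One fixes $\theta$ on the slice $u=0$ and transports it along the commuting flows, whose pullback only rescales $\lambda$. On $\R^{2d+1}$ the form $\theta$ then satisfies $\theta\wedge(d\theta)^d\neq0$, so it is a contact form.

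The third step is the contact Darboux theorem: a contact form in dimension $2d+1$ is locally $\tilde\psi\,(dp^1+p^2dp^3+\dots+p^{2d}dp^{2d+1})$, up to the standard sign and relabeling conventions. I would prove this by induction on $d$ in the style of the classical Pfaff problem. First pick a function $p^{2d+1}$ whose differential is independent of $\theta$, and restrict to a level set of $p^{2d+1}$ to obtain a form of rank $d-1$ in dimension $2d$. Then apply the already-established reduction and the inductive hypothesis there, and extend back along a transversal flow. Alternatively one can use Moser's path method between $\theta$ and its linearization at $X_0$. Composing with the Frobenius chart and absorbing $\mu$ into $\psi$ yields \eqref{eq:pfaff-normal-form} on a small ball $V$. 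Finally, I would extend the local diffeomorphism $p$ to a global diffeomorphism of $\R^N$. Shrink $V$ to a ball and compose with an isotopy to the linearization $Dp(X_0)$ that is cut off outside a larger ball, so that $p$ is unchanged on $V$. I would also extend $\psi$ by a cutoff.

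I expect the main obstacle to be the contact Darboux step, or equivalently the inductive Pfaff reduction. The bookkeeping needed to keep the rank exactly $d-1$ on the slice, and to arrange that the normalizing factor $\psi$ can be separated out, is the delicate part. The first two steps are routine linear algebra and Cartan calculus. In the paper I would state this step with a reference to \cite[Theorem 3.1]{BryantChernGardnerGoldschmidtGriffiths1991} rather than reprove it in full.
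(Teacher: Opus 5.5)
Your plan is correct and lands where the paper does: the paper gives no proof of this statement and simply cites \cite[Theorem 3.1]{BryantChernGardnerGoldschmidtGriffiths1991}, and your outline (reduce by Cauchy characteristics to a contact form in dimension $2d+1$, apply contact Darboux, then globalize $p$ and cut off $\psi$) is the standard argument behind that reference. If you write out the Darboux induction, tighten the extension step, since an arbitrary transversal flow does not carry the slice normal form to nearby slices: choose the slice $S$ transverse to the Reeb field $R$ of $\theta$ with $\theta|_{TS}\neq0$ and flow along $R$, which preserves $\theta$, so that $\theta=dt+\theta_S$ in the flow-box coordinates and you can take $\psi_S^{-1}$ and the flow time $t$ as the last two coordinates.
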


\subsection{Proof of Propositions~\ref{pr:Frobeniussituation}, \ref{pr:nonextension}, \ref{pr:nonclose}}
\begin{proof}[Proof of \Cref{pr:Frobeniussituation}] This is a very simple proof, and we give it only for the convenience of the reader.
If $\lambda \equiv 0$ there is nothing to show, so we may assume
$\lambda\neq0$ in $U$. By the $d=0$ case of the constant-rank theorem for Pfaff
forms, \Cref{th:pfaffconstantrank}, we may assume (shrinking to a smaller neighborhood of $X_0$ as needed)
\[
        \lambda=\psi\,dp^1 \quad \text{in $U$}.
\]
We may assume $\psi \neq 0$ in $U$, otherwise we shrink to a smaller neighborhood of $X_0$.
We may also assume that $X_0 = 0$ and $p: \B^{N} \to U \subset \R^N$ is a diffeomorphism that satisfies $p(0)=0$.
Choose $\rho>0$ so small that
\[
        [-2\rho,2\rho]^{N} \Subset p(U),
\]
and set
\[
        V
        :=
        p^{-1}\bigl((-\rho,\rho)^{N}\bigr) \subset U \subset \R^N.
\]
Now assume $f:\partial\mathbb B^n\to V$ is smooth and $f^\ast\lambda=0$. Write
\[
        p=(p^1,p'),
        \qquad
        p'=(p^2,\ldots,p^N).
\]
Since $\psi\circ f\neq0$, we conclude
\[
        d(p^1\circ f)=0 \quad \text{in $U$}.
\]
Since $\partial\mathbb B^n$ is connected for
$n\ge2$, we find $p^1\circ f \equiv c$ for some constant $c\in(-\rho,\rho)$.

Set
\[
        g:=p'\circ f:\partial\mathbb B^n\to [-\rho,\rho]^{N-1} .
\]
The map $g$ is Lipschitz. Extend $g$ to a Lipschitz map
\[
        G:\mathbb B^n\to [-\rho,\rho]^{N-1}
\]
e.g.
\[
        G(x)
        :=
        \begin{cases}
        |x|\,g\!\left(\frac{x}{|x|}\right), & x\neq0,\\
        0, & x=0.
        \end{cases}
\]
Set
\[
        F(x):=p^{-1}\bigl(c,G(x)\bigr),
        \qquad x\in\mathbb B^n .
\]
Then $F$ is Lipschitz and takes values in $V\Subset U$. Moreover, if
$x\in\partial\mathbb B^n$, then
\[
        p(F(x))
        =
        (c,G(x))
        =
        (p^1(f(x)),p'(f(x)))
        =
        p(f(x)).
\]
Since $p$ is a diffeomorphism, $F=f$ on $\partial\mathbb B^n$.

It remains to show $F^\ast\lambda = 0$. We have
\[
        p^1\circ F\equiv c
        \qquad\text{in }\mathbb B^n.
\]
Since $F$ is Lipschitz, it is differentiable a.e. and we have,
\[
        F^\ast\lambda
        =
        (\psi\circ F)\,d(p^1\circ F)
        =
        (\psi\circ F)\,dc
        =
        0 .
\]
Thus $F^\ast\lambda=0$ pointwise a.e. Since $F$ is Lipschitz pointwise is equivalent to distributional and we can conclude.
\end{proof}

For \Cref{pr:nonextension} we use the following combinatorial observation -- essentially a version of Lefshetz Lemma, cf. \cite[Proposition~1.2.30]{huybrechts}, \cite[Proposition~1.1a]{Bryant}, \cite[Lemma 2.9]{HajlaszMirraSchikorra}.

\begin{lemma}\label{la:lefschetz}
Let $d\geq 1$, let $\R^{2d+1}$ have coordinates
$p^1,\ldots,p^{2d+1}$, and set
\[
\lambda=dp^1+\sum_{i=1}^d p^{2i}\,dp^{2i+1}.
\]
Then every smooth $(d+1)$-form $\omega\in C^\infty(\Ep^{d+1} \R^{2d+1})$ can be written as
\[
\omega=\lambda\wedge\widetilde{\omega}_1+d\lambda\wedge\widetilde{\omega}_2
\]
for some smooth forms
$\widetilde{\omega}_1\in C^\infty(\Ep^d\R^{2d+1})$ and
$\widetilde{\omega}_2\in C^\infty(\Ep^{d-1}\R^{2d+1})$. Moreover we have $\supp \tilde{\omega}_1 \cup \supp \tilde{\omega}_2 \subset \supp \omega$.
\end{lemma}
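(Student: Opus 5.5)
We argue by the Lefschetz decomposition on forms with constant coefficients, followed by a pointwise smooth version. Put $\theta := d\lambda = \sum_{i=1}^d dp^{2i}\wedge dp^{2i+1}$, which is a constant-coefficient symplectic form on the span of $dp^2,\dots,dp^{2d+1}$. Let $W:=\mathrm{span}\{dp^2,\ldots,dp^{2d+1}\}$, a $2d$-dimensional space. At each point the covectors $\lambda, dp^2,\dots,dp^{2d+1}$ form a basis of $T^\ast\R^{2d+1}$, since $\lambda = dp^1 + (\text{element of } W)$. Consequently every $(d+1)$-covector $\omega_x$ can be written uniquely as
\[
\omega_x=\lambda\wedge a_x+b_x,\qquad a_x\in\Ep^{d}W,\ b_x\in\Ep^{d+1}W.
\]
The coefficients $a_x,b_x$ are obtained from $\omega_x$ by a linear change of basis whose entries are smooth in $x$ (they are polynomials in $p^{2},\ldots,p^{2d}$). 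Hence $a,b$ are smooth forms, and they vanish wherever $\omega$ vanishes, so their supports lie in $\supp\omega$.

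The key algebraic input is the hard Lefschetz statement for the symplectic vector space $(W,\theta)$: the map $L:\Ep^{k-2}W\to\Ep^{k}W$, $\beta\mapsto\theta\wedge\beta$, is surjective for $k\geq d+1$, since $\Ep^{k}W\cong\Ep^{2d-k}W$ via $L^{k-d}$ and $L^{k-d}=L\circ L^{k-d-1}$. We apply this with $k=d+1$, so we need surjectivity of $L:\Ep^{d-1}W\to\Ep^{d+1}W$. This holds because $L:\Ep^{d-1}W\to\Ep^{d+1}W$ is an isomorphism, which is the Lefschetz isomorphism $L^{1}$ in degree $d-1=d-1$. We may quote this from \cite[Proposition~1.2.30]{huybrechts}, or prove it via the $\mathfrak{sl}_2$-representation with $L$, its dual $\Lambda$ and $H=(k-d)$. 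Since $\theta$ has constant coefficients in the basis $dp^j$, $L$ is a fixed linear isomorphism. Writing $L^{-1}$ for its inverse, define
\[
\widetilde{\omega}_2:=L^{-1}(b),
\]
which is smooth, with the same support as $b$, and $\theta\wedge\widetilde\omega_2=b$.

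Finally set $\widetilde{\omega}_1:=a$. Then
\[
\omega=\lambda\wedge\widetilde{\omega}_1+d\lambda\wedge\widetilde{\omega}_2
\]
as required. The case $d=1$ should be checked for consistency: here $W$ is 2-dimensional, $\Ep^2W$ is spanned by $\theta$, and $L$ is multiplication of functions by $\theta$.

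The main obstacle is the Lefschetz isomorphism in degree $d-1\to d+1$. The rest is bookkeeping, smoothness of the coefficients, and supports. For the Lefschetz step I would rely on the cited linear algebra and not reprove it; if needed, a short dimension count plus injectivity via the primitive decomposition would suffice.
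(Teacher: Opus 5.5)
Your argument is correct and has the same structure as the paper's proof. Both first use the basis $\lambda,dp^2,\ldots,dp^{2d+1}$ to write $\omega=\lambda\wedge\alpha+(\text{a }(d+1)\text{-form in }dp^2,\ldots,dp^{2d+1})$, and then show that the second piece lies in the image of $d\lambda\wedge\cdot$. The only difference is that you quote the linear Lefschetz isomorphism $\Ep^{d-1}W\to\Ep^{d+1}W$ (the paper cites the same Huybrechts reference), whereas the paper proves the needed surjectivity by hand, writing each monomial as $d\lambda\wedge\eta_m$ with the recursively chosen coefficients $c_r$.
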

\begin{proof}

Write
\[
\omega=dp^1\wedge\alpha+\beta,
\]
where $\alpha$ and $\beta$ involve only the differentials
$dp^2,\ldots,dp^{2d+1}$. By linear independence we have 
\[
 \supp \alpha \cup \supp \beta \subset \supp \omega.
\]
Since
$dp^1=\lambda-\sum_{i=1}^d p^{2i}\,dp^{2i+1}$,
we get
\[
\omega
=
\lambda\wedge\alpha
+
\left(
\beta-\sum_{i=1}^d p^{2i}\,dp^{2i+1}\wedge\alpha
\right).
\]
The expression in parentheses is a smooth linear combination of monomials of
degree $d+1$ in the variables
\[
dp^2,\ldots,dp^{2d+1}.
\]
Each of these monomials has a coefficient with support in $\supp \alpha \cup \supp \beta \subset \supp \omega$.
It is therefore enough to prove that every such monomial is of the form
$d\lambda\wedge\eta$ for some $(d-1)$-form $\eta$ involving only
$dp^2,\ldots,dp^{2d+1}$.

Fix such a monomial $m$. For each pair
\[
dp^{2i},\ dp^{2i+1},
\qquad 1\leq i\leq d,
\]
there are three possibilities: both occur in $m$, exactly one occurs in $m$, or
neither occurs in $m$. Let
\[
F=\{i:\text{ both }dp^{2i},dp^{2i+1}\text{ occur in }m\},
\]
\[
E=\{i:\text{ neither }dp^{2i},dp^{2i+1}\text{ occurs in }m\},
\]
and let $S$ be the set of indices for which exactly one of
$dp^{2i},dp^{2i+1}$ occurs.

Since $m$ has degree $d+1$,
\[
2|F|+|S|=d+1.
\]
Also, since $F\cup E\cup S=\{1,\ldots,d\}$ and the union is disjoint,
\[
|F|+|E|+|S|=d.
\]
Subtracting gives
\[
|F|=|E|+1.
\]

Let $u$ be the wedge product of the single differentials coming from the indices
in $S$. Up to changing the final sign of $\eta$, we may assume
\[
m
=
u\wedge\bigwedge_{i\in F}
\left(dp^{2i}\wedge dp^{2i+1}\right).
\]
All products over sets of indices are taken in increasing order.

We want to find $\eta_m$ so that
\begin{equation}\label{eq:lefschetz:goal}
 m = d\lambda \wedge \eta_m.
\end{equation}
We take the Ansatz
\[
\eta_m
:=
u\wedge
\sum_{\substack{A\subseteq F\cup E\\ |A|=|E|}}
c_A
\bigwedge_{i\in A}
\left(dp^{2i}\wedge dp^{2i+1}\right),
\]
where the numbers $c_A\in\mathbb Q$ will be chosen now. The degree is correct,
because
\[
\deg\eta_m=|S|+2|E|=|S|+|E|+|F|-1=d-1.
\]

Since
\[
d\lambda=\sum_{i=1}^d dp^{2i}\wedge dp^{2i+1},
\]
and since for every $i\in S$ one of $dp^{2i}$, $dp^{2i+1}$ already occurs in
$u$, the terms with $i\in S$ vanish when wedged with $\eta_m$. Thus
\[
\begin{split}
d\lambda\wedge\eta_m
=&
u\wedge
\left(
\sum_{i\in F\cup E}dp^{2i}\wedge dp^{2i+1}
\right)
\wedge
\sum_{\substack{A\subseteq F\cup E\\ |A|=|E|}}
c_A
\bigwedge_{i\in A}
\left(dp^{2i}\wedge dp^{2i+1}\right)\\
=&\sum_{\substack{B\subseteq F\cup E\\ |B|=|E|+1}} \brac{\sum_{\substack{A\subset B\\ |A|=|E|}} c_A}\,
u\wedge
\bigwedge_{i\in B}
\left(dp^{2i}\wedge dp^{2i+1}\right).
\end{split}
\]
So it is enough for \eqref{eq:lefschetz:goal} to choose the numbers $c_A$ so that
\begin{equation}\label{eq:lefschetz:goalv2}
\sum_{\substack{A\subset B\\ |A|=|E|}} c_A
=
\begin{cases}
1, & B=F,\\
0, & B\neq F.
\end{cases}
\end{equation}

We choose $c_A$ depending only on $|A\cap E|$, $c_A = c_{|A \cap E|}$. Choose first
\[
c_0=\frac{1}{|E|+1},
\]
and then define recursively, for $1\leq r\leq |E|$,
\[
c_r
=
-\frac{r}{|E|+1-r}\,c_{r-1}.
\]
If $B=F$, then since $F \cap E = \emptyset$ and $|F| = |E|+1$,
\[
\sum_{\substack{A\subset B\\ |A|=|E|}}c_{|A\cap E|}
=\sum_{\substack{A\subset F\\ |A|=|E|}}c_{0}
=
(|E|+1)c_0
=
1.
\]
If $B\neq F$, set $r=|B\cap E|$. Then $1\leq r\leq |E|$.  Then $B$ contains $r$ elements of $E$ and $|B|-r=|E|+1-r$ elements of $F$.
Choosing $A \subset B$ is the same as eliminating one element from $B$, so we eleminate $r$ times an elements of $E$ and thus $c_|A \cap E| = c_{r-1}$ and $|E|+1-r$ times we eliminate an element of $F$, i.e. $c_{|A\cap E|} = c_r$. Thus,
\[
\sum_{\substack{A\subset B\\ |A|=|E|}} c_{|A\cap E|}
=
r c_{r-1}+(|E|+1-r)c_r
=
0
\]
by the recursive definition of $c_r$. This gives \eqref{eq:lefschetz:goalv2} and we can conclude.
\end{proof}

\begin{proof}[Proof of \Cref{pr:nonextension}]
This follows essentially from the arguments in \cite{HS2023}, but we reformulate them for the convenience of the reader.

By constant rank theorem, \Cref{th:pfaffconstantrank}, we may assume (making $U$ smaller)
\begin{equation}\label{eq:prnonextensionlambda} \lambda = dp^1 + p^2 dp^3 + \ldots + p^{2d} d p^{2d+1} \end{equation}

Since $d+1\leq 2d+1$, we can find a smooth embedding  $g:\S^{d} \to \R^{2d+1} \times \{0\} \subset \R^N$, $g(\S^d) \subset p(V)$ and $g^\ast\lambda = 0$:

E.g. we can take $g$ the rescaled Legendrian embedding $\gamma$: For $\xi=(\xi_{1},\xi_2,\ldots,\xi_{d+1})\in \S^d\subset\R^{d+1}$, define
\[
        \gamma:\S^d\to\R^{2d+1}
\]
by
\begin{equation}\label{eq:legendrianembedding}
        \gamma^1(\xi)=\frac13\xi_{1}^3,
        \qquad
        \gamma^{2i}(\xi)=\xi_{1}\xi_{i+1},
        \qquad
        \gamma^{2i+1}(\xi)=\xi_{i+1},
        \qquad i=1,\ldots,d.
\end{equation}
Then $\gamma: \S^d \to \R^{d+1}$ is an embedding and $\gamma^\ast \lambda = 0$.

Let $\delta_\rho:\R^{2d+1}\to\R^{2d+1}$ denote the contact dilation
\begin{equation}\label{eq:scaling}
        \delta_\rho(p)^1=\rho^2 p^1,
        \qquad
        \delta_\rho(p)^{2i}=\rho p^{2i},
        \qquad
        \delta_\rho(p)^{2i+1}=\rho p^{2i+1}.
\end{equation}
and set
\[
 g := \delta_\rho \circ (\gamma,0)
\]
where $\rho$ is so small that $g: \S^{d} \to V$.

By \cite[Lemma 2.6.]{HS2023} (observe that $g$ is smooth, so no approximation is needed) there exists
\[
        \omega\in C_c^\infty(\Lambda^{d}\R^{N}), \qquad         d\omega\equiv 0
        \text{ in a neighborhood of }g(\S^{d})
\]
such that ,
\[
        \int_{\S^{d}} g^\ast\omega \neq 0 .
\]
Of course, since $g$ is explicit we could compute $\omega$ explicitly, but it's worth noting that this argument works for any embedding $g:\S^{d+1} \to \R^{2d+1} \times \{0\}$.

Assume now, towards a contradiction, that there is
\[
        F\in C^\alpha(\overline{\B^{d+1}};V), \quad \text{for some }\alpha>\frac{d+1}{d+2}
\]
with $F\Big |_{\partial \B^{d+1}} = g$ and
\[
        F^\ast\lambda=0 \quad \text{in $\B^{d+1}$ in distributional sense}
\]
We may assume that $N=2d+1$, indeed, otherwise compose $F$ with the projection on the first $2d+1$ coordinates, which does not affect $\lambda$ nor $g$.

Observe that $d\omega$ is smooth $d+1$-form, so we may write
\[
 d\omega = \lambda \wedge \tilde{\omega}_1 + d\lambda \wedge \tilde{\omega}_2
\]
by \Cref{la:lefschetz}. 

For any smooth approximation $F_\eps: \B^{d+1} \to V$ of $F$ we may assume that $F_\eps^\ast \tilde{\omega}_i \equiv 0$ close to $\partial \B^{d+1}$, and have 
\[
 \begin{split}
 \int_{\B^{d+1}} F_\eps^\ast d\omega =& \int_{\B^{d+1}} F_\eps^\ast \lambda \wedge F_\eps^\ast \tilde{\omega}_1 + \int_{\B^{d+1}} F_\eps^\ast d\lambda \wedge F_\eps^\ast \tilde{\omega}_2\\
 =&\int_{\B^{d+1}} F_\eps^\ast \lambda \wedge F_\eps^\ast \tilde{\omega}_1 - \int_{\B^{d+1}} F_\eps^\ast \lambda \wedge F_\eps^\ast d\tilde{\omega}_2\\
 \xrightarrow{\eps \to 0}& 0,
 \end{split}
\]
by the assumption that $F^\ast(\lambda) = 0$ in distributional sense, as long as the convergence $F_\eps \to F$ is in $C^\alpha$, $\alpha > \frac{d+1}{d+2}$ using \Cref{la:smoothapproxpullback}.

We thus find
\[
 \int_{\S^d} g^\ast\omega = \lim_{\eps \to 0} \int_{\B^{d+1}} F_\eps^\ast d\omega \equiv 0,
\]
for any smooth approximation of $F$, and thus a contradiction. We can conclude.
\end{proof}

\begin{proof}[Proof of \Cref{pr:nonclose}]
We argue similar to \Cref{pr:nonextension}: The statement is local and we assume w.l.o.g. $\lambda$ has the standard form \eqref{eq:prnonextensionlambda} and $X_0 =0$, using \Cref{th:pfaffconstantrank}.

Let
\[
        g:\S^d\to \R^{2d+1} \times \{0\}_{N-(2d+1)}
\]
be the Legendrian embedding of \eqref{eq:legendrianembedding}, which satisfies $g^\ast\lambda=0$ and let
$G:\R^{d+1}\to\R^{2d+1} \times \{0\}_{N-(2d+1)}$ be the trivial extension
\[
        G^1(y)=\frac13 y_0^3,
        \qquad
        G^{2i}(y)=y_0y_i,
        \qquad
        G^{2i+1}(y)=y_i,
        \qquad i=1,\ldots,d .
\]
Set $F_0 := \delta_\rho \circ G$, where $\delta_\rho$ is the dilation from \eqref{eq:scaling}, we can assume w.l.o.g. that $F_0: \overline{\B^{d+1}} \to V$.

Fix now $\alpha \in (\frac{d+1}{d+2},1)$ and fix $\gamma = \gamma(\alpha,F_0) \in (0,1)$ determined below, and assume $F: \B^{d+1} \to V$ with
\[
 \|F-F_0\|_{C^\alpha(\B^{d+1})} \leq \gamma.
\]
In particular we have
\[
 \|F\|_{C^{\alpha}(\frac{1}{2} \B^{d+1})} \aleq \|F_0\|_{C^\alpha(\B^{d+1})} +1 =: \Lambda
\]
By explicit computation, or \cite[Lemma 2.6]{HS2023}, applied to the smooth embedding $f_0 := F_0 \Big |_{\partial \frac{1}{2}\B^{d+1}} \to \R^N$ there exists
\[
        \omega\in C_c^\infty(\Lambda^d\R^{2d+1}), \quad d\omega\equiv0
        \qquad\text{in a neighborhood } f_0(\frac{1}{2}\S^d),
\]
such that
\[
        \int_{\frac{1}{2}\S^d}f_0^\ast\omega=1.
\]
From \Cref{la:smoothapproxpullback}, since $\alpha > \frac{d}{d+1}$ for any $C^\alpha$-approximation $F_\eps$ of $F$
\[
 \abs{\int_{\frac{1}{2}\S^d}f_0^\ast\omega-\lim_{\eps \to 0} \int_{\frac{1}{2}\S^d}F_\eps^\ast\omega} \aleq_{\Lambda} \|F-F_0\|_{C^\alpha} \leq \gamma.
\]
Thus we have, whenever $\gamma$ is small enough only (depending on $\Lambda$ and thus $F_0$),
\begin{equation}\label{eq:alskdageq12}
 \lim_{\eps \to 0} \int_{\frac{1}{2}\S^d}F_\eps^\ast\omega h^\ast\omega \geq \frac{1}{2}.
\end{equation}
We would like to use the arguments of \Cref{pr:nonextension} that this is incompatible with $F^\ast \lambda = 0$ in $\B^{d+1}$ in distributional sense, because $\alpha > \frac{d+1}{d+2}$. But observe that we do not have the support condition of $d \omega$. Instead, we observe exactly as in \cite[Proof of Theorem 7.4.]{HajlaszMirraSchikorra},
\[
 F^\ast(\lambda) = 0 \text{ in $\B^n$} \quad \Rightarrow \quad \|F_\eps^\ast(\lambda)\|_{L^\infty(\frac{1}{2} \B^n)} \aleq_{\Lambda} \eps^{2\alpha -1},
\]
where $F_\eps$ denotes the usual mollification of $F$. Moreover, 
\[
 \|F_\eps \eta\|_{L^\infty(\overline{\frac{1}{2}\B^n})} \aleq_{\eta} \eps^{(\alpha-1)k} \quad \text{for $\eta \in C^\infty(\Ep^k \R^N)$}
\]

And thus with the help of \Cref{la:lefschetz} we have
\[
\begin{split}
 \int_{\frac{1}{2} \S^d} F_\eps^\ast \omega = &
 \int_{\frac{1}{2} \B^{d+1}} F_\eps^\ast \lambda \wedge F^\ast \tilde{\omega}_1
 +\int_{\frac{1}{2} \B^{d+1}} F_\eps^\ast d\lambda \wedge F^\ast \tilde{\omega}_2\\
 = & \int_{\frac{1}{2} \B^{d+1}} F_\eps^\ast \lambda \wedge F^\ast \tilde{\omega}_1
 -\int_{\frac{1}{2} \B^{d+1}} F_\eps^\ast \lambda \wedge F^\ast d\tilde{\omega}_2
 +\int_{\frac{1}{2} \S^{d}} F_\eps^\ast \lambda \wedge F^\ast \tilde{\omega}_2\\
 \end{split}
\]
which implies 
\[
 \abs{\int_{\frac{1}{2} \S^d} F_\eps^\ast \omega} \aleq \eps^{2\alpha -1+d(\alpha-1)} + \eps^{2\alpha -1+(d-1)(\alpha-1)} \xrightarrow{\eps \to 0} 0
\]
since $\alpha > \frac{d+1}{d+2}$. This is a contradiction to \eqref{eq:alskdageq12} and we can conclude.

\end{proof}
\section{The Nash-construction}\label{s:nash}
Our arguments in the next sections are based on Convex Integration techniques due to Nash, and we profited a lot from the exposition in \cite{Sze12}.

Throughout the remainder of this paper we will work with the following form
\begin{equation}\label{eq:pfaffcr}
        \lambda
        =
        dp^1+
        \sum_{m=1}^d p^{2m}\,dp^{2m+1}
        \qquad\text{on }\R^N .
\end{equation}
We are interested in solving in distributional sense for $F=(F^1,\ldots,F^N): \R^n \to \R^N$
\[
 F^\ast(\lambda) = 0,
\]
i.e.
\[
 \nabla F^1 + \sum_{m=1}^d F^{2m}\,\nabla F^{2m+1} = 0
\]
We observe that the mix of derivatives prevents this from being a first-order differential inclusion of the form $\nabla F \in \mathcal{K}$. Nonetheless, taking the $curl$ it is (on topologically trivial domains) equivalent to a first order differential inclusion -- but we shall make no direct use of that fact and rather work with the equation above.

We begin by observing the following ``geometric lemma'', \Cref{pr:geometriclemma}. It is a version of the elementary observation that we can fix $n+1$ unit vectors $\{\xi_1, \ldots,\xi_{n+1}\} \subset \S^{n-1}$ (with baricenter $0$) such that their convex hull contains a small ball $B_\eps^\ast(0) \subset \R^n$. It is somewhat a variant of \cite[Lemma 3.3]{Sze12}. The finiteness of the sum helps us control how many corrections we need in the iteration. Reducing the size of the sum is thus directly correlated to improving the H\"older exponent of the resulting map.
\begin{proposition}[The geometric lemma]
\label{pr:geometriclemma}
There exist linear functions $\phi_1,\ldots,\phi_{n+1}:\R^n\to\R$, $\phi_j(x) = \langle v_j,x\rangle$ for some $v_j \in \S^{n-1}$, a number $\eps^\ast>0$ and smooth functions
\[
        \gamma_j:B_{\eps^\ast}(0)\subset\R^n\to(0,\infty),
        \qquad j=1,\ldots,n+1,
\]
with the following property:

For every $q=(q_1,\ldots,q_n)\in B_{\eps^\ast}(0)$, one has
\begin{equation}\label{eq:geometriclanice}
        q_1\,dx^1+\cdots+q_n\,dx^n
        =
        \sum_{j=1}^{n+1}\gamma_j(q)^2\,d\phi_j .
\end{equation}
Moreover
\begin{equation}\label{eq:geomlemests}
        0<c\leq \gamma_j(q)\leq C,
        \qquad
        |D\gamma_j(q)|\leq C,
        \qquad
        |D^2\gamma_j(q)|\leq C
\end{equation}
for every $q\in B_{\eps^\ast}(0)$ and every $j=1,\ldots,n+1$, and some dimensional constants $c, C>0$.
\end{proposition}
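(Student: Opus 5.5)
Since $d\phi_j=\sum_i (v_j)_i\,dx^i$ is constant, \eqref{eq:geometriclanice} is the vector identity
\[
q=\sum_{j=1}^{n+1}\gamma_j(q)^2\,v_j \qquad \text{in } \R^n .
\]
The plan is to choose the $v_j$ as the vertices of a regular simplex inscribed in $\S^{n-1}$ and solve this linear equation explicitly for the weights $a_j:=\gamma_j^2$. We then take $\gamma_j=\sqrt{a_j}$, which is smooth as long as the $a_j$ stay away from $0$.

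\textbf{Choice of vectors.} Take unit vectors $v_1,\ldots,v_{n+1}\in\S^{n-1}$ with $\sum_j v_j=0$ and $\langle v_i,v_j\rangle=-\frac1n$ for $i\neq j$ (regular simplex centered at the origin). These span $\R^n$, and they satisfy the frame identity
\[
\sum_{j=1}^{n+1} \langle v_j,q\rangle\, v_j=\frac{n+1}{n}\,q \qquad\text{for all } q\in\R^n .
\]
To check the frame identity: the operator $T=\sum_j v_j\otimes v_j$ is invariant under the symmetry group of the simplex, which acts irreducibly on $\R^n$, so $T$ is a multiple of the identity. Its trace is $n+1$, so $T=\frac{n+1}{n}\,\mathrm{Id}$. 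Alternatively, one can verify the identity directly on each $v_i$ using the inner products and then extend by linearity.

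\textbf{Definition of $\gamma_j$.} Set
\[
a_j(q):=\frac{1}{n+1}+\frac{n}{n+1}\langle v_j,q\rangle .
\]
Then $\sum_j a_j(q)v_j=\frac{1}{n+1}\sum_j v_j+\frac{n}{n+1}\cdot\frac{n+1}{n}q=q$, which is the required identity. For $|q|<\eps^\ast:=\frac{1}{2n}$ we have $|\langle v_j,q\rangle|\le|q|$, so
\[
\frac{1}{2(n+1)}\le a_j(q)\le \frac{2}{n+1}.
\]
Define $\gamma_j:=\sqrt{a_j}$. Then $c\le\gamma_j\le C$ with dimensional constants $c=(2(n+1))^{-1/2}$ and $C=(2/(n+1))^{1/2}$. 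Since $a_j$ is affine, $D\gamma_j=\frac{Da_j}{2\sqrt{a_j}}$ and $D^2\gamma_j=-\frac{Da_j\otimes Da_j}{4a_j^{3/2}}$. Using $|Da_j|\le\frac{n}{n+1}$ together with the lower bound on $a_j$, both derivatives are bounded by dimensional constants, which gives \eqref{eq:geomlemests}.

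\textbf{Main obstacle.} The only nontrivial step is the frame identity, that is, the fact that the Gram-type operator of the simplex vertices is a scalar multiple of the identity. Once that is in hand, everything else is explicit: positivity of the $a_j$ holds because $\eps^\ast$ is small, and that positivity is exactly what makes the square root smooth with bounded derivatives.
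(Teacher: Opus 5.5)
Your proof is correct and follows essentially the same route as the paper: vertices of a regular simplex with $\sum_j v_j=0$, write $q$ as a linear combination of the $v_j$ with coefficients linear in $q$, shift those coefficients by a positive constant (allowed because $\sum_j v_j=0$), and take square roots. The only difference is that you make the coefficients explicit through the frame identity $\sum_j\langle v_j,q\rangle v_j=\frac{n+1}{n}q$, which gives an explicit $\eps^\ast=\frac{1}{2n}$. The paper instead only invokes the existence of some linear $\ell_j$ with $q=\sum_j\ell_j(q)v_j$ and sets $\gamma_j=\sqrt{1+\ell_j}$.
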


\begin{proof}
Choose vectors $v_1,\ldots,v_{n+1}\in \S^{n-1}$ to be the vertices of any regular simplex with baricenter at the origin. Thus
\[
        \sum_{j=1}^{n+1} v_j=0,
\]
and the vectors $v_j$ span $\R^n$.
Define $\phi_j(x)=\langle v_j,x\rangle$, for $x\in\R^n$.
Then
\[
        d\phi_j=\sum_{\alpha=1}^n v_j^\alpha\,dx^\alpha,
        \qquad
        |d\phi_j|=|v_j|=1,
\]
and
\begin{equation}\label{eq:sumdphijeq0}
        \sum_{j=1}^{n+1}d\phi_j=0.
\end{equation}

Since the vectors $v_1,\ldots,v_{n+1}$ span $\R^n$, there exist linear
functions
$\ell_j:\R^n\to\R$, $j=1,\ldots,n+1$,
such that
\[
        q=\sum_{j=1}^{n+1}\ell_j(q)v_j
        \qquad\text{for every }q\in\R^n.
\]
In particular, for every $q=(q_1,\ldots,q_n)$,
\[
        \langle q,x\rangle =\sum_{j=1}^{n+1}\ell_j(q) \langle v_j,x \rangle
        \qquad\text{for every }q\in\R^n.
\]
and thus
\begin{equation}\label{eq:linearestdec}
        q_1\,dx^1+\cdots+q_n\,dx^n
        =
        \sum_{j=1}^{n+1}\ell_j(q)\,d\phi_j .
\end{equation}

Choose $\eps^\ast>0$ so that
\[
        |\ell_j(q)|\leq \frac12
        \qquad\text{on }B_{\eps^\ast}(0).
\]
Then
\[
        1+\ell_j(q)>0
        \qquad\text{for }q\in B_{\eps^\ast}(0),
        \quad j=1,\ldots,n+1.
\]
Define $\gamma_j(q):=\sqrt{1+\ell_j(q)}$. Clearly, \eqref{eq:geomlemests} holds.
Moreover, using \eqref{eq:sumdphijeq0} and \eqref{eq:linearestdec}, for any $q \in B_{\eps^\ast}(0)$
\[
\begin{split}
        \sum_{j=1}^{n+1}\gamma_j(q)^2\,d\phi_j
        &=
        \sum_{j=1}^{n+1}\bigl(1+\ell_j(q)\bigr)d\phi_j        \\
        &=
        \sum_{j=1}^{n+1}d\phi_j
        +
        \sum_{j=1}^{n+1}\ell_j(q)d\phi_j                      \\
        &=
        q_1\,dx^1+\cdots+q_n\,dx^n .
\end{split}
\]
This proves \eqref{eq:geometriclanice} and we can conclude.
\end{proof}

The following combinatorial observation is used to improve the H\"older regularity, but if we were to use $s=n+1$ (which is trivial) we'd still get nontrivial H\"older bounds above $1/2$. We recommend to take $s=n+1$ and $\nu(i) = i$ for the first reading.
\begin{lemma}
\label{la:wigglebookkeeper}
Let $b,n \in \N$, and set $s:=\left\lceil\frac{n+1}{b}\right\rceil$.

Set
\begin{equation}\label{eq:bookkeeper:asdopasd2def}
        m_b(j):=1+((j-1)\bmod b),
        \qquad
        \nu_b(j):=1+\left\lfloor\frac{j-1}{b}\right\rfloor .
\end{equation}

Then
\[
        m_b:\{1,\ldots,n+1\}\to\{1,\ldots,b\},
        \qquad
        \nu_b:\{1,\ldots,n+1\}\to\{1,\ldots,s\},
\]
and, whenever $i<j$ and $m(i)=m(j)$, one has
$\nu(i)<\nu(j)$.
\end{lemma}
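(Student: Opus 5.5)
This is elementary: I will use the division-with-remainder representation of $j-1$ by $b$. For $j\in\{1,\ldots,n+1\}$ write $j-1=b\,q+r$ with $q=\lfloor (j-1)/b\rfloor$ and $r=(j-1)\bmod b\in\{0,\ldots,b-1\}$. Then $m_b(j)=1+r$ and $\nu_b(j)=1+q$. In particular the map $j\mapsto(m_b(j),\nu_b(j))$ is injective, since $j=1+b(\nu_b(j)-1)+(m_b(j)-1)$ is recovered from the pair.

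\emph{Ranges.} Since $0\le r\le b-1$, we get $m_b(j)\in\{1,\ldots,b\}$ immediately. For $\nu_b$, note $q\ge0$ gives $\nu_b(j)\ge1$. Also $j-1\le n$ gives $q\le\lfloor n/b\rfloor$. It then suffices to check $1+\lfloor n/b\rfloor\le\lceil (n+1)/b\rceil=s$. Write $n=bk+t$ with $0\le t\le b-1$. Then $\lfloor n/b\rfloor=k$ and $(n+1)/b=k+(t+1)/b$ with $0<(t+1)/b\le1$. Hence $\lceil (n+1)/b\rceil=k+1$, which is exactly $1+\lfloor n/b\rfloor$. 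So $\nu_b$ maps into $\{1,\ldots,s\}$ and in fact attains $s$.

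\emph{Monotonicity.} Suppose $i<j$ and $m_b(i)=m_b(j)$. Then $i-1$ and $j-1$ have the same remainder mod $b$, so $j-i=b(\nu_b(j)-\nu_b(i))$. Since $j-i>0$ and $b>0$, this forces $\nu_b(j)-\nu_b(i)>0$, i.e. $\nu_b(i)<\nu_b(j)$ (indeed $\nu_b(j)-\nu_b(i)=(j-i)/b\ge1$).

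There is no real obstacle here. The only point needing a moment's care is the ceiling identity $\lceil (n+1)/b\rceil=1+\lfloor n/b\rfloor$ for positive integers, which the case split $n=bk+t$ settles. For $b=1$ everything reduces to $s=n+1$, $m\equiv1$, $\nu(j)=j$, matching the suggested first reading.
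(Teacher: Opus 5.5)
Your proof is correct and follows the same route as the paper: the bound $\nu_b(j)\le s$ comes from $j\le n+1$ and the definition of $s$, and monotonicity comes from $j-i$ being a positive multiple of $b$ when $m_b(i)=m_b(j)$. You additionally verify the identity $\lceil (n+1)/b\rceil = 1+\lfloor n/b\rfloor$, which the paper leaves implicit, and your writing $j-i=b(\nu_b(j)-\nu_b(i))$ states the congruence step cleanly (the paper's ``$i=j+kb$'' should read $j=i+kb$).
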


\begin{proof}
Since $j\leq n+1$, the definition of $s$ gives $\nu(j)\leq s$. If $m(j) = m(i)$ then $j-i = kb$ for some $k \in \Z$. So if $i < j$ then $i = j+kb$ for some $k \in \N$. Thus $\nu(j) > \nu(i)$.
\end{proof}

Convex integration a la Nash-Kuiper \cite{Nash54,Nash56,Kuiper55a,Kuiper55b} relies on updating successively a geometric equation with very simple one-dimensional corrugations. The basis of one corrugation for our situation is the following
\begin{lemma}
\label{la:profile}
There exist smooth $1$-periodic functions
\[
        \eta,\zeta,\chi:\R\to\R
\]
such that
\begin{equation}\label{eq:profiles}
        2\chi+\eta\zeta=0,
        \qquad
        \chi'+\eta\zeta'=1 \quad \text{in $\R$}
\end{equation}
\end{lemma}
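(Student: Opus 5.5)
The plan is to eliminate $\chi$ using the first relation in \eqref{eq:profiles}, which reduces the system to a single Wronskian-type identity for the pair $(\eta,\zeta)$. That identity is solved explicitly by a circular loop.

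\emph{Reduction.} Suppose $\eta,\zeta$ are smooth and $1$-periodic, and \emph{define} $\chi:=-\tfrac12\eta\zeta$. Then $\chi$ is automatically smooth and $1$-periodic, and the first identity $2\chi+\eta\zeta=0$ holds by construction. Differentiating gives $\chi'=-\tfrac12(\eta'\zeta+\eta\zeta')$, so
\[
\chi'+\eta\zeta'=\tfrac12\bigl(\eta\zeta'-\eta'\zeta\bigr).
\]
Hence the second identity in \eqref{eq:profiles} is equivalent to
\[
\eta\zeta'-\eta'\zeta\equiv 2 \quad\text{on } \R .
\]
Geometrically, the closed curve $t\mapsto(\eta(t),\zeta(t))$ must sweep out signed area at the constant rate $1$. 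This is the one-dimensional shadow of the fact that going around a horizontal loop produces vertical motion.

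\emph{Explicit choice.} I would take the round circle traversed once per period:
\[
\eta(t)=a\cos(2\pi t),\qquad \zeta(t)=a\sin(2\pi t),\qquad a>0.
\]
Then
\[
\eta\zeta'-\eta'\zeta=2\pi a^2\bigl(\cos^2(2\pi t)+\sin^2(2\pi t)\bigr)=2\pi a^2,
\]
so the choice $a=\pi^{-1/2}$ gives the required constant $2$. This yields
\[
\chi(t)=-\tfrac{1}{2\pi}\cos(2\pi t)\sin(2\pi t)=-\tfrac{1}{4\pi}\sin(4\pi t),
\]
which is smooth and $1$-periodic. A direct check confirms both identities in \eqref{eq:profiles}: the first holds by definition of $\chi$, and the second reduces to $\tfrac12\cdot 2=1$.

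There is no real obstacle here. The only point needing care is to make sure $\chi$ comes out periodic rather than carrying a linear drift. Eliminating $\chi$ through the algebraic relation, instead of integrating $\chi'=1-\eta\zeta'$, guarantees this automatically; consistency with that integrated form is exactly the identity $\eta\zeta'-\eta'\zeta=2$. Since all three functions are trigonometric polynomials, all $C^k$ norms are explicit, which is presumably what the subsequent corrugation estimates will use.
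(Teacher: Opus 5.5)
Your proposal is correct and follows essentially the same approach as the paper: the paper makes the identical choice $\eta=\pi^{-1/2}\cos(2\pi t)$, $\zeta=\pi^{-1/2}\sin(2\pi t)$, $\chi=-\tfrac12\eta\zeta$, and verifies $\chi'+\eta\zeta'=\tfrac12(\eta\zeta'-\eta'\zeta)=\cos^2(2\pi t)+\sin^2(2\pi t)=1$ by direct computation. Your reduction to the constant-area-rate identity $\eta\zeta'-\eta'\zeta\equiv 2$ is just a slightly more explanatory framing of that same computation.
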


\begin{proof}
Take
\[
        \eta(t)=\pi^{-1/2}\cos(2\pi t),
        \qquad
        \zeta(t)=\pi^{-1/2}\sin(2\pi t),
        \qquad
        \chi(t)=-\frac12\eta(t)\zeta(t).
\]
Then $2\chi+\eta\zeta=0$. Also
\[
\begin{split}
        \chi'+\eta\zeta'
        &=
        -\frac12(\eta'\zeta+\eta\zeta')+
        \eta\zeta'
        \\
        &=
        -\frac12\eta'\zeta+
        \frac12\eta\zeta'
        \\
        &=
        \sin^2(2\pi t)+\cos^2(2\pi t)
        =1.
\end{split}
\]
\end{proof}

Now we compute how updates to $F$ change $F^\ast \lambda$,
\begin{lemma}
\label{la:wiggleonce}
Assume $2d+1 \leq N$. Let $F\in C^\infty(\overline{\B^n};\R^N)$, fix $m\in\{1,\ldots,d\}$, and
let $f,u,h\in C^\infty(\B^n)$

Define $H:\B^n\to\R^N$ by
\begin{equation}\label{eq:wiggleonce:H2m}
        H^{2m}=f,
        \qquad
        H^{2m+1}=u,
        \qquad
        H^1=-F^{2m}u+h,
\end{equation}
and
\begin{equation}\label{eq:wiggleonce:nochange}
        H^\ell=0
        \qquad\text{for }\ell\notin\{1,2m,2m+1\}.
\end{equation}
Set $G:=F+H$.

Then
\begin{equation}\label{eq:wiggleonceGastlambda}
        G^\ast\lambda-F^\ast\lambda
        =
        f\,dF^{2m+1}
        -
        u\,dF^{2m}
        +dh+f\,du.
\end{equation}
\end{lemma}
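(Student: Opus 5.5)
This is a direct computation. Since $\lambda$ is linear in the coefficient functions, I will expand $G^\ast\lambda$ using $G^\ell=F^\ell+H^\ell$ and subtract $F^\ast\lambda$.

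By \eqref{eq:pfaffcr} we have $G^\ast\lambda = dG^1+\sum_{k=1}^d G^{2k}\,dG^{2k+1}$. By \eqref{eq:wiggleonce:nochange}, for every index $k\neq m$ both $G^{2k}=F^{2k}$ and $G^{2k+1}=F^{2k+1}$. Hence those summands cancel exactly against the corresponding summands of $F^\ast\lambda$. Here I use that the indices $1$, $2m$, $2m+1$ are distinct and that $2k,2k+1\notin\{1,2m,2m+1\}$ for $k\neq m$. Also, the coordinates $\ell>2d+1$ do not appear in $\lambda$ at all.

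It therefore remains to compute
\[
G^\ast\lambda-F^\ast\lambda = dH^1 + (F^{2m}+f)\,d(F^{2m+1}+u) - F^{2m}\,dF^{2m+1}.
\]
Expanding the product gives
\[
dH^1 + F^{2m}\,du + f\,dF^{2m+1} + f\,du .
\]
Inserting $H^1=-F^{2m}u+h$ gives
\[
dH^1 = -u\,dF^{2m} - F^{2m}\,du + dh .
\]
The term $F^{2m}\,du$ cancels, and what remains is $f\,dF^{2m+1}-u\,dF^{2m}+dh+f\,du$, which is \eqref{eq:wiggleonceGastlambda}.

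No step is a real obstacle. The only point needing care is the bookkeeping: checking that no other coordinate of $G$ is changed, so that all remaining terms cancel. Everything is smooth, so the pullbacks here are classical and no distributional issues arise.
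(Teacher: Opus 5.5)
Your proposal is correct and matches the paper's proof. The paper also notes that only coordinates $1,2m,2m+1$ change, writes $G^\ast\lambda-F^\ast\lambda=dH^1+F^{2m}\,dH^{2m+1}+H^{2m}\,dF^{2m+1}+H^{2m}\,dH^{2m+1}$, and substitutes the definition of $H$, so that the $F^{2m}\,du$ terms cancel exactly as in your computation.
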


\begin{proof}
Only the coordinates $1,2m,2m+1$ are changed. Hence
\[
\begin{split}
        G^\ast\lambda-F^\ast\lambda
        =&
        dH^1
        +F^{2m}\,dH^{2m+1}
        +H^{2m}\,dF^{2m+1}
        +H^{2m}\,dH^{2m+1}\\
        \overset{\eqref{eq:wiggleonce:H2m}}{=}&
        -F^{2m}\,du-u\,dF^{2m}+dh
        +F^{2m}\,du
        +f\,dF^{2m+1}
        +f\,du\\
        =&
        f\,dF^{2m+1}
        -u\,dF^{2m}
        +dh
        +f\,du\\
\end{split}
\]
We can conclude.
\end{proof}

The following is the fundamental update procedure computation, using the maps from \Cref{la:profile}.
Nash's idea is simple, and it is amazing that it actually works: By \Cref{pr:geometriclemma} we can hope to (locally) write
\[
 F^\ast \lambda = \sum_{i=1}^{n+1} A_i^2 d\phi_i
\]
The following argument provides an update that ``removes'' e.g. $A_1 d\phi_1$, at the expense of a ``small'' error term $E$.

\begin{proposition}[Single update]
\label{pr:oneupdate}
Let $F\in C^\infty(\overline{\B^n};\R^N)$, fix $m\in\{1,\ldots,d\}$, and assume
\begin{equation}\label{eq:singleupdate:Df2mass}
        \|DF^{2m}\|_{L^\infty(\B^n)} + \|DF^{2m+1}\|_{L^\infty(\B^n)}\leq \Gamma,
\end{equation}
and
\begin{equation}\label{eq:singleupdate:F2mest}
        \|F^{2m}\|_{L^\infty(\B^n)}\leq \Upsilon.
\end{equation}
and
\begin{equation}\label{eq:singleupdate:D2F2mest}
        \|D^2F^{2m}\|_{L^\infty(\B^n)}
        +
        \|D^2F^{2m+1}\|_{L^\infty(\B^n)}
        \le
        \Lambda,
\end{equation}

Let $A\in C^\infty(\overline{\B^n})$, $L \geq 1$, $\xi \in \R^n$, $|\xi| = 1$ and set $\phi(x):=\langle \xi,x\rangle_{\R^n}$.

Then there is $H\in C^\infty(\overline{\B^n};\R^N)$ with the following properties:

If $A \in C_c^\infty(\B^n)$ then $H \in C_c^\infty(\B^n)$ and $H$ is supported in
$\operatorname{supp}A$.

If we set $G:=F+H$ we have
\begin{equation}\label{eq:singleupdate:GastlambdamFastlambda}
        G^\ast\lambda-F^\ast\lambda
        =
        A^2\,d\phi+E,
\end{equation}
where for uniform constants $C>0$,

\begin{itemize} \item we have an estimate of the error term $E$

\begin{equation}\label{eq:singleupdatererrorterm}
        \|E\|_{L^\infty(\B^n)}
        \le
        C L^{-1/2}\|A\|_{L^\infty(\B^n)}\,\Gamma.
\end{equation}
and
\begin{equation}\label{eq:oneupate:goaladsldjder}
        \|DE\|_{L^\infty(\B^n)}
        \le
        C\left[
        \left(
        L^{1/2}\|A\|_{L^\infty(\B^n)}
        +
        L^{-1/2}\|DA\|_{L^\infty(\B^n)}
        \right)\Gamma
        +
        L^{-1/2}\|A\|_{L^\infty(\B^n)}\Lambda
        \right].
\end{equation}
\item We have the following estimate for the coordinates which have been changed
\begin{equation}\label{eq:singleupdate:C0est1}
        \|H^{2m}\|_{L^\infty(\B^n)}
        +
        \|H^{2m+1}\|_{L^\infty(\B^n)}
        \le
        C L^{-1/2}\|A\|_{L^\infty(\B^n)},
\end{equation}
and
\begin{equation}\label{eq:singleupdate:C1est1}
        \|DH^{2m}\|_{L^\infty(\B^n)}
        +
        \|DH^{2m+1}\|_{L^\infty(\B^n)}
        \le
        C\left(
        L^{1/2}\|A\|_{L^\infty(\B^n)}
        +
        L^{-1/2}\|DA\|_{L^\infty(\B^n)}
        \right).
\end{equation}
\begin{equation}\label{eq:singleupdate:C2asdasdest1}
\begin{split}
        \|D^2 H^{2m}\|_{L^\infty(\B^n)}
        +\|D^2 H^{2m+1}\|_{L^\infty(\B^n)}
        \leq C\Big(
        &L^{3/2}\|A\|_{L^\infty(\B^n)}
        +L^{1/2}\|DA\|_{L^\infty(\B^n)}
        \\
        &+L^{-1/2}\|D^2A\|_{L^\infty(\B^n)}
        \Big).
\end{split}
\end{equation}
\item
and we have the overall estimate
\begin{equation}\label{eq:singleupdate:HLinft21asd}
        \|H\|_{L^\infty(\B^n)}
        \le
        C\, (1+\Upsilon)
        \left(
        L^{-1/2}\|A\|_{L^\infty(\B^n)}
        +
        L^{-1}\|A\|_{L^\infty(\B^n)}^2
        \right),
\end{equation}
and
\begin{equation}\label{eq:singleupdate:DHLinftyasldkj}
\begin{split}
        \|DH\|_{L^\infty(\B^n)}
        \leq C\, (1+\Upsilon) \Big(
        &L^{1/2}\|A\|_{L^\infty(\B^n)}
        +L^{-1/2}\|DA\|_{L^\infty(\B^n)}
        \\
        &+\Gamma L^{-1/2}\|A\|_{L^\infty(\B^n)}
        +L^{-1}\|A\|_{L^\infty(\B^n)}\|DA\|_{L^\infty(\B^n)}
        +\|A\|_{L^\infty(\B^n)}^2
        \Big),
\end{split}
\end{equation}
\item Moreover only three coordinate have changed, i.e. we have \eqref{eq:wiggleonce:nochange}.
\end{itemize}
\end{proposition}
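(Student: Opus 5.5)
We use the ansatz of \Cref{la:wiggleonce} with the profiles of \Cref{la:profile}, evaluated at the fast variable $L\phi$. Concretely, set
\[
f:=L^{-1/2}A\,\eta(L\phi),\qquad u:=L^{-1/2}A\,\zeta(L\phi),\qquad h:=L^{-1}A^2\chi(L\phi),
\]
and define $H$ by \eqref{eq:wiggleonce:H2m}, \eqref{eq:wiggleonce:nochange}, i.e. $H^{2m}=f$, $H^{2m+1}=u$, $H^1=-F^{2m}u+h$, and all other coordinates zero. The support statement and the claim that only three coordinates change are immediate, since $f,u,h$ carry a factor of $A$.

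By \eqref{eq:wiggleonceGastlambda} we have $G^\ast\lambda-F^\ast\lambda=f\,dF^{2m+1}-u\,dF^{2m}+dh+f\,du$, so the main computation is $dh+f\,du$. Writing $t=L\phi$, we get
\[
dh=L^{-1}\,2A\chi\,dA+A^2\chi'(t)\,d\phi,\qquad f\,du=L^{-1}A\eta\zeta\,dA+A^2\eta\zeta'(t)\,d\phi .
\]
By \eqref{eq:profiles}, $\chi'+\eta\zeta'=1$, so the $d\phi$-terms sum to exactly $A^2\,d\phi$. Also $2\chi+\eta\zeta=0$, so the $L^{-1}A\,dA$ terms cancel. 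This second cancellation is the key point, because otherwise the error would contain $L^{-1}A\,DA$, which is not controlled by $L^{-1/2}\|A\|\Gamma$. Hence
\[
E=f\,dF^{2m+1}-u\,dF^{2m}=L^{-1/2}A\bigl(\eta(L\phi)\,dF^{2m+1}-\zeta(L\phi)\,dF^{2m}\bigr),
\]
which gives \eqref{eq:singleupdatererrorterm} directly from \eqref{eq:singleupdate:Df2mass}. For \eqref{eq:oneupate:goaladsldjder} we differentiate by the product rule. The derivative falling on $\eta(L\phi)$ produces $L^{1/2}\|A\|\Gamma$, the derivative on $A$ produces $L^{-1/2}\|DA\|\Gamma$, and the derivative on $dF^{2m+1}$ or $dF^{2m}$ produces $L^{-1/2}\|A\|\Lambda$.

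The estimates \eqref{eq:singleupdate:C0est1}, \eqref{eq:singleupdate:C1est1} and \eqref{eq:singleupdate:C2asdasdest1} follow by differentiating $f,u$ up to twice with the product rule, where each derivative on the profile costs a factor $L$. For the full $H$ we must also bound $H^1=-F^{2m}u+h$. Its sup norm is at most $\Upsilon L^{-1/2}\|A\|+L^{-1}\|A\|^2$, which gives \eqref{eq:singleupdate:HLinft21asd}. For its derivative,
\[
DH^1=-DF^{2m}\,u-F^{2m}\,Du+Dh .
\]
The first term gives $\Gamma L^{-1/2}\|A\|$ and the second gives $\Upsilon(L^{1/2}\|A\|+L^{-1/2}\|DA\|)$. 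For the third, $Dh=L^{-1}2A\chi\,DA+A^2\chi'\,D\phi$, which gives $L^{-1}\|A\|\|DA\|+\|A\|^2$. Together these yield \eqref{eq:singleupdate:DHLinftyasldkj}.

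The only conceptual step is the choice of scalings $L^{-1/2},L^{-1/2},L^{-1}$. It is dictated by wanting the $d\phi$-coefficient to be of order $1\cdot A^2$: we need $f\,du\sim(L^{-1/2}A)(L^{1/2}A)$ and $dh\sim L^{-1}A^2\cdot L$. Beyond this, I expect no real obstacle; the proof is bookkeeping that tracks which factor each derivative lands on.
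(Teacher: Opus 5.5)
Your proposal is correct and follows the paper's proof essentially line by line. You use the same ansatz $f=L^{-1/2}A\,\eta(L\phi)$, $u=L^{-1/2}A\,\zeta(L\phi)$, $h=L^{-1}A^2\chi(L\phi)$ inserted into \Cref{la:wiggleonce}, and the same two profile identities to get $dh+f\,du=A^2\,d\phi$. The remaining product-rule bookkeeping for $E=f\,dF^{2m+1}-u\,dF^{2m}$ and for $H^1=-F^{2m}u+h$ also matches the paper.
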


\begin{proof}
Let $\eta,\zeta,\chi$ be the corrugation profiles from \Cref{la:profile} and set for $x \in \B^n$
\begin{equation}\label{eq:singleupdate:fuhdef}
        f(x)=L^{-1/2}\, A(x)\, \eta(L\phi(x)),
        \qquad
        u(x)=L^{-1/2}\, A(x)\, \zeta(L\phi(x)),
        \qquad
        h(x)=L^{-1}\, A^2(x)\, \chi(L\phi(x)).
\end{equation}
Apply \Cref{la:wiggleonce}, and obtain $H$ and $G:=F+H$.
If $A\in C_c^\infty(\B^n)$, then
$H\in C_c^\infty(\B^n;\R^N)$, $\operatorname{supp}H\subset\operatorname{supp}A$. If $A\in C^\infty(\overline{\B^n})$ we still have $H\in C^\infty(\overline{\B^n};\R^N)$

In either case, by
\eqref{eq:wiggleonceGastlambda},
\begin{equation}\label{eq:singleupdate:GastmFastcomp3easd}
        G^\ast\lambda-F^\ast\lambda
        =
        f\,dF^{2m+1}-u\,dF^{2m}+dh+f\,du.
\end{equation}
We compute
\[
\begin{split}
        dh
        &=
        L^{-1}\, 2A\,\chi(L\phi)\,dA
        +A^2\, \chi'(L\phi)\,d\phi,
        \\
        du&=L^{-1/2}\, dA\, \zeta(L\phi) + L^{1/2}\, A\, \zeta'(L\phi)\, d\phi,
        \\
        f\,du
        &=
        L^{-1}\, A\, \eta(L\phi)\, \zeta(L\phi)\,dA
        +A^2\, \eta(L\phi)\, \zeta'(L\phi)\,d\phi.
\end{split}
\]
Using \eqref{eq:profiles},
\[
\begin{split}
        dh+f\,du
        &=
        L^{-1}\, A\, \underbrace{(2\chi+\eta\zeta)(L\phi)}_{\equiv 0}\,dA
        +A^2\, \underbrace{(\chi'+\eta\zeta')(L\phi)}_{\equiv 1}\,d\phi
        \\
        &=A^2\,d\phi.
\end{split}
\]
Thus \eqref{eq:singleupdate:GastmFastcomp3easd} becomes
\[
        G^\ast\lambda-F^\ast\lambda
        =
        A^2\,d\phi
        +f\,dF^{2m+1}-u\,dF^{2m}.
\]
This proves \eqref{eq:singleupdate:GastlambdamFastlambda} with
\begin{equation}\label{eq:Edefonewiggle}
        E:=f\,dF^{2m+1}-u\,dF^{2m}.
\end{equation}

Observe \begin{equation}\label{eq:klxcvixjcvffuest}
        \|f\|_{L^\infty}+\|u\|_{L^\infty}\aleq_{\chi,\eta,\zeta}  L^{-1/2}\|A\|_{L^\infty(\B^n)},
\end{equation}
and \[
        \|h\|_{L^\infty}\aleq_{\chi,\eta,\zeta}  L^{-1}\|A\|_{L^\infty(\B^n)}^2.
\]
Then \eqref{eq:singleupdatererrorterm} follows from
\eqref{eq:singleupdate:Df2mass}.

Moreover, by \eqref{eq:wiggleonce:H2m}
\[
 \|H\|_{L^\infty} \aleq_{\chi,\eta,\zeta}  \brac{L^{-1/2}\|A\|_{L^\infty(\B^n)}+L^{-1}\|A\|_{L^\infty(\B^n)}^2} \brac{1+ \|F^{2m}\|_{L^\infty}}
\]
so \eqref{eq:singleupdate:HLinft21asd} follows.

Recall \eqref{eq:singleupdate:fuhdef}. Observe
 \begin{equation}\label{eq:klxcvixjcvfDfuest}
\begin{split}
        \|Df\|_{L^\infty}+\|Du\|_{L^\infty}
        &\aleq
        L^{-1/2}\|DA\|_{L^\infty}+L^{1/2}\|A\|_{L^\infty}
        \\
        \|Dh\|_{L^\infty}
        &\aleq
        L^{-1}\|A\|_{L^\infty}\,\|DA\|_{L^\infty}+\|A\|_{L^\infty}^2.
\end{split}
\end{equation}
so again by \eqref{eq:wiggleonce:H2m}
\[
\begin{split}
\|DH\|_{L^\infty} \aleq& \|Df\|_{L^\infty} + \|Du\|_{L^\infty} + \|DF^{2m}\|_{L^\infty} \|u\|_{L^\infty} +  \|F^{2m}\|_{L^\infty} \|Du\|_{L^\infty} + \|Dh\|_{L^\infty}\\
\aleq&
(1+\Upsilon) \brac{L^{-1/2}\|DA\|_{L^\infty}+
L^{1/2}\|A\|_{L^\infty} } +
\Gamma  L^{-1/2}\|A\|_{L^\infty(\B^n)} +   L^{-1}\|A\|_{L^\infty}\,\|DA\|_{L^\infty}+\|A\|_{L^\infty}^2\\
\end{split}
\]
which implies \eqref{eq:singleupdate:DHLinftyasldkj}.

Lastly,
\[
        \|D^2f\|_{L^\infty}+\|D^2u\|_{L^\infty}
        \aleq
        L^{-1/2}\|D^2A\|_{L^\infty}+L^{1/2}\|DA\|_{L^\infty}+L^{3/2}\|A\|_{L^\infty}
        ,
\]
which proves \eqref{eq:singleupdate:C2asdasdest1}  by representation \eqref{eq:wiggleonce:H2m}.

Similarly we prove
\eqref{eq:oneupate:goaladsldjder},
\eqref{eq:singleupdate:C0est1},
\eqref{eq:singleupdate:C1est1}, and
\eqref{eq:wiggleonce:nochange}.

By \eqref{eq:Edefonewiggle} and then \eqref{eq:klxcvixjcvffuest},\eqref{eq:klxcvixjcvfDfuest}
\[
\begin{split}
        \|DE\|_{L^\infty}
        \aleq&
        \|Df\|_{L^\infty} \|DF^{2m+1}\|_{L^\infty}
        +\|f\|_{L^\infty}\,\|D^2F^{2m+1}\|_{L^\infty}
        \\
        &+
        \|Du\|_{L^\infty} \|DF^{2m}\|_{L^\infty}
        +
        \|u\|_{L^\infty}\,\|D^2F^{2m}\|_{L^\infty}\\
        \aleq&
         \brac{L^{-1/2}\|DA\|_{L^\infty}+L^{1/2}\|A\|_{L^\infty}} \|DF^{2m+1}\|_{L^\infty}
        +L^{-\frac{1}{2}} \|A\|_{L^\infty}\,\|D^2F^{2m+1}\|_{L^\infty}
        \\
        \aleq&\brac{L^{-1/2}\|DA\|_{L^\infty}+L^{1/2}\|A\|_{L^\infty}} \Gamma
        +L^{-\frac{1}{2}} \|A\|_{L^\infty}\,\Lambda
        \\
\end{split}
\]
This is \eqref{eq:oneupate:goaladsldjder}.

Next, by \eqref{eq:wiggleonce:H2m},
\[
        H^{2m}=f,
        \qquad
        H^{2m+1}=u.
\]
Hence
\[
\begin{split}
        \|H^{2m}\|_{L^\infty(\B^n)}
        +
        \|H^{2m+1}\|_{L^\infty(\B^n)}
        &=
        \|f\|_{L^\infty(\B^n)}
        +
        \|u\|_{L^\infty(\B^n)}
        \overset{\eqref{eq:klxcvixjcvffuest}}{\aleq}
        L^{-1/2}\|A\|_{L^\infty(\B^n)}.
\end{split}
\]
This proves \eqref{eq:singleupdate:C0est1}. Similarly,
\[
\begin{split}
        \|DH^{2m}\|_{L^\infty(\B^n)}
        +
        \|DH^{2m+1}\|_{L^\infty(\B^n)}
        &=
        \|Df\|_{L^\infty(\B^n)}
        +
        \|Du\|_{L^\infty(\B^n)}
        \\
        &\overset{\eqref{eq:klxcvixjcvfDfuest}}{\aleq}
        L^{1/2}\|A\|_{L^\infty(\B^n)}
        +
        L^{-1/2}\|DA\|_{L^\infty(\B^n)}.
\end{split}
\]
This proves \eqref{eq:singleupdate:C1est1}.

Finally, \eqref{eq:wiggleonce:nochange} is immediate from the
definition of $H$.
\end{proof}

 If we iterate the argument of \Cref{pr:oneupdate}, we can try to make $F^\ast \lambda$ arbitrarily small. Of course, we have to be careful with interaction terms due to the nonlinearity of $F^\ast \lambda$ (i.e. two updates $(F+H_1 + H_2)^\ast \lambda \neq (F+H_1)^\ast \lambda + (H^2)^\ast \lambda$, but there are $H_1,H_2$ interaction terms, and this is precisely responsible for the loss in regularity).

\begin{proposition}
\label{pr:severalupdates1}
Recall \eqref{eq:pfaffcr} and that $N\ge 2d+1$, $n \geq 2$. Fix $m \in \{1,\ldots,d\}$. There exists $\eps_0 > 0$ depending on $d$ and $N$ and $n$, otherwise uniform, constant so that the following holds for $s:=\left\lceil\frac{n+1}{d}\right\rceil$:

Let
\[
        F\in C^\infty(\overline{\B^n};\R^N),
        \qquad
        M\ge1,
        \qquad
        \Gamma\ge1,
        \qquad
        \Upsilon\ge0,
        \qquad
        r>0.
\]
and assume for some compact set $K$
\[
        \operatorname{supp}F^\ast\lambda\subset K \Subset \B^n.
\]
Set
\begin{equation}\label{eq:severalupdates1:elldef}
        \ell:=\frac{r}{M\Gamma^2}.
\end{equation}
Assume the following scale compatibility conditions:
\begin{equation}\label{eq:severalupdates1:rleqMGammadef}
        r\leq M\Gamma,
        \qquad
        r\leq M^2\Gamma^2,
        \qquad
        4\ell<\dist(K,\partial \B^n).
\end{equation}
Assume moreover that
\begin{equation}\label{eq:severalupdates1:Fasttlambdas0fdi2}
        \|F^\ast\lambda\|_{L^\infty(\B^n)}
        \le
        \eps_0 r,
\end{equation}
and that
\begin{equation}\label{eq:severalupdates1:DF2mLinftyest}
        \max_{1\leq m\leq d}
        \|D(F^{2m},F^{2m+1})\|_{L^\infty(\B^n)}
        \le
        \Gamma,
\end{equation}
\begin{equation}\label{eq:severalupdates1:D2F2mLinftyestD2ver}
        \max_{1\leq m\leq d}
        \|D^2(F^{2m},F^{2m+1})\|_{L^\infty(\B^n)}
        \le
        \frac{\Gamma^3}{r},
\end{equation}
\begin{equation}\label{eq:severalupdates1:Dfastlambda}
        \|D(F^\ast\lambda)\|_{L^\infty(\B^n)}
        \le
        \Gamma^2,
\end{equation}
and
\begin{equation}\label{eq:severalupdates1:F2mlinftyupsilonasd}
        \max_{1\leq m\leq d}
        \|F^{2m}\|_{L^\infty(\B^n)}
        \le
        \Upsilon.
\end{equation}
Then there exists $G\in C^\infty(\overline{\B^n};\R^N)$
such that
\begin{equation}\label{eq:severalupdates1:outputsupp}
        \operatorname{supp}(G-F)
        \subset
        \{x\in \B^n:\dist(x,K)<2\ell\}
        \Subset \B^n,
\end{equation}
\begin{equation}\label{eq:severalupdates1:outputGastlambda}
        \|G^\ast\lambda\|_{L^\infty(\B^n)}
        \le
        C\frac{r}{M},
\end{equation}
\begin{equation}\label{eq:severalupdates1:outputGmF}
        \|G-F\|_{L^\infty(\B^n)},
        \le
        C(1+\Upsilon)\frac{r}{M\Gamma},
\end{equation}
\begin{equation}\label{eq:severalupdates1:outputDGmF}
        \|D(G-F)\|_{L^\infty(\B^n)},
        \le
        C(1+\Upsilon)M^s\Gamma,
\end{equation}
\begin{equation}\label{eq:severalupdates1:outputDG2nNsGanna}
        \max_{1\leq m\leq d}
        \|D(G^{2m},G^{2m+1})\|_{L^\infty(\B^n)}
        \le
        C M^s\Gamma,
\end{equation}
\begin{equation}\label{eq:severalupdates1:outputD2G2nNsGanna}
        \max_{1\leq m\leq d}
        \|D^2(G^{2m},G^{2m+1})\|_{L^\infty(\B^n)}
        \le
        C\frac{M^{3s}\Gamma^3}{r},
\end{equation}
and
\begin{equation}\label{eq:severalupdates1:outputDGastlambdasad33e}
        \|D(G^\ast\lambda)\|_{L^\infty(\B^n)}
        \le
        C M^{2s}\Gamma^2.
\end{equation}
The following coordinates remain unchanged
\begin{equation}\label{eq:severalupdates1:outputnochange}
        G^j=F^j
        \qquad\text{for }j=2d+2,\ldots,N.
\end{equation}
Here the constants $C$ are independent of
$r,M,\Gamma,\Upsilon,F,\Upsilon$.
\end{proposition}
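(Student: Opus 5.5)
The plan is to remove $F^\ast\lambda$ in one sweep of $n+1$ single updates from \Cref{pr:oneupdate}, one for each direction $\phi_j$ of \Cref{pr:geometriclemma}. The pair of coordinates and the frequency of each update are chosen via \Cref{la:wigglebookkeeper} with $b=d$.

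\emph{Decomposition.} First choose a cutoff $\psi\in C_c^\infty(\B^n)$ with $\psi\equiv1$ on $K$, $\supp\psi\subset\{\dist(\cdot,K)<2\ell\}$, $|D\psi|\aleq \ell^{-1}=M\Gamma^2/r$ and $|D^2\psi|\aleq \ell^{-2}$. This is possible by \eqref{eq:severalupdates1:rleqMGammadef}. Write $-F^\ast\lambda=\sum_\alpha Q_\alpha dx^\alpha$ and set $q:=Q/r$. By \eqref{eq:severalupdates1:Fasttlambdas0fdi2} with $\eps_0\le\eps^\ast/2$ we have $q\in B_{\eps^\ast}(0)$. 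Define
\[
A_j:=r^{1/2}\,\psi\,\gamma_j(q),\qquad j=1,\ldots,n+1 .
\]
By \eqref{eq:geometriclanice} and $\psi\equiv1$ on $K\supset\supp F^\ast\lambda$ we get $\sum_j A_j^2\,d\phi_j=r\psi^2\, q\cdot dx=-F^\ast\lambda$. This also holds off $K$, because there $q=0$ and $\sum_j\gamma_j(0)^2d\phi_j=\sum_j d\phi_j=0$. By \eqref{eq:geomlemests}, \eqref{eq:severalupdates1:Dfastlambda} and the cutoff bounds, $|A_j|\aleq r^{1/2}$ and $|DA_j|\aleq r^{1/2}(M\Gamma^2/r)$. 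The quantity $|D^2A_j|$ is controlled similarly, using a chain rule term in $D^2(F^\ast\lambda)$. If that term is not available, I would first mollify $q$ at scale $\ell$; the resulting error is at most $\ell\Gamma^2=r/M$, which is acceptable.

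\emph{Updates.} Perform the updates $j=1,\ldots,n+1$ in order. Update $j$ applies \Cref{pr:oneupdate} to the current map $F_{j-1}$ with amplitude $A_j$, direction $\phi_j$, coordinate pair $m=m_d(j)$, and frequency
\[
L_{\nu}:=\frac{M^{2\nu}\Gamma^2}{r},\qquad \nu=\nu_d(j)\le s .
\]
The conditions $r\le M^2\Gamma^2$ give $L_\nu\ge1$. The key bookkeeping is that $E_j$ and the relevant norms in \Cref{pr:oneupdate} only involve the pair $(F^{2m},F^{2m+1})$. By \Cref{la:wigglebookkeeper}, all earlier updates on that pair had strictly smaller $\nu$. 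So by induction on $j$, with \eqref{eq:singleupdate:C1est1} and \eqref{eq:singleupdate:C2asdasdest1}, the current pair satisfies $\Gamma_{\rm cur}\aleq M^{\nu-1}\Gamma$ and $\Lambda_{\rm cur}\aleq M^{3(\nu-1)}\Gamma^3/r$. The base case is \eqref{eq:severalupdates1:DF2mLinftyest} and \eqref{eq:severalupdates1:D2F2mLinftyestD2ver}. The induction step is $L_\nu^{1/2}r^{1/2}=M^\nu\Gamma$, and the $DA$ terms are harmless because $L_\nu^{-1/2}|DA|\aleq M^{\nu-1}\Gamma\cdot\Gamma\cdot(r/\Gamma)^{0}\lesssim L_\nu^{1/2}|A|$, which follows from $M\Gamma^2/r\le L_1$.

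Summing, $G^\ast\lambda=F^\ast\lambda+\sum_jA_j^2d\phi_j+\sum_jE_j=\sum_jE_j$. By \eqref{eq:singleupdatererrorterm},
\[
\|E_j\|\aleq L_\nu^{-1/2}r^{1/2}M^{\nu-1}\Gamma=\frac{r}{M},
\]
which gives \eqref{eq:severalupdates1:outputGastlambda}. Likewise \eqref{eq:oneupate:goaladsldjder} gives
\[
\|DE_j\|\aleq M^{2\nu-1}\Gamma^2+\frac{r}{M^\nu\Gamma}\cdot\frac{M^{3\nu-3}\Gamma^3}{r}\aleq M^{2s}\Gamma^2,
\]
which is \eqref{eq:severalupdates1:outputDGastlambdasad33e}. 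Estimates \eqref{eq:severalupdates1:outputDG2nNsGanna} and \eqref{eq:severalupdates1:outputD2G2nNsGanna} follow from the inductive bounds at $\nu=s$.

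For \eqref{eq:severalupdates1:outputGmF} and \eqref{eq:severalupdates1:outputDGmF}, use \eqref{eq:singleupdate:HLinft21asd} and \eqref{eq:singleupdate:DHLinftyasldkj}. The sup bound of the current $F^{2m}$ stays $\le\Upsilon+C$, since each update adds at most $L^{-1/2}r^{1/2}\le r/(M\Gamma)\le1$. Moreover $L^{-1/2}|A|\le r/(M\Gamma)$, $L^{-1}|A|^2\le r/(M^2\Gamma^2)\le r/(M\Gamma)$, and $|A|^2\le r\le M\Gamma$. The support statement holds because $\supp H_j\subset\supp\psi$, and \eqref{eq:severalupdates1:outputnochange} follows from \eqref{eq:wiggleonce:nochange}.

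The main obstacle is the interaction bookkeeping. One must verify that the error of update $j$ sees only the gradient of its own coordinate pair, which is inflated only by previous same-pair updates of lower stage. One must also check that the $DA$, $D^2A$ and $A^2$ terms never dominate the $L^{1/2}A$ terms under the three scale conditions. I expect the $D^2A_j$ control to need the most care, together with making sure the $H^1=-F^{2m}u$ component does not spoil the gradient bound of other pairs. It does not, since only coordinates $2m,2m+1$ enter the error.
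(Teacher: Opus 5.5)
Your proposal is correct and follows the paper's proof essentially step by step: a cutoff at scale $\ell$, the decomposition into $\sum_j A_j^2\,d\phi_j$ from the geometric lemma, and $n+1$ single updates with frequencies $M^{2\nu(j)}\Gamma^2/r$, organised by the same-pair bookkeeping of \Cref{la:wigglebookkeeper}. The one point to sharpen is that the mollification at scale $\ell$ you offer as a fallback is mandatory, not optional: no bound on $D^2(F^\ast\lambda)$ is assumed, yet $D^2A_j$ needs one for the $D^2G$ estimate, and the paper mollifies exactly as you describe. Once you mollify, the cutoff should be $\equiv 1$ on the $\ell$-neighbourhood of $K$, so that $\sum_j A_j^2\,d\phi_j=-\omega^\ell$ and $G^\ast\lambda=F^\ast\lambda-\omega^\ell+\sum_j E_j$, where the extra term is bounded by $r/M$ and its derivative by $M\Gamma^2$.
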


\begin{proof}
By \eqref{eq:severalupdates1:rleqMGammadef}, the $\ell$-neighborhood of
$\operatorname{supp}F^\ast \lambda$ is compactly contained in $\B^n$. Choose
\[
        \theta\in C_c^\infty(\B^n)
\]
such that for $K := \operatorname{supp} F^\ast \lambda$,
\[
        \theta=1
        \quad\text{on }\{x:\dist(x,K)\leq \ell\},
        \qquad
        \operatorname{supp}\theta
        \subset
        \{x:\dist(x,K)<2\ell\},
\]and
\begin{equation}\label{eq:severalupdates1:cutoffargbdd}
        \|D^a\theta\|_{L^\infty(\B^n)}
        \le
        C\ell^{-a},
        \qquad
        a=1,2.
\end{equation}

Let $\rho$ be a standard mollifier supported in the unit ball. Extend the
coefficient vector of $\omega=F^\ast\lambda$ by zero outside $\B^n$, and define (componentwise)
\[
        \omega^\ell:=\rho_\ell\ast F^\ast (\lambda).
\]
Then $\operatorname{supp}\omega^\ell
        \subset
        \{x:\dist(x,K)\leq \ell\}$, in particular $\theta\equiv 1$ on $\operatorname{supp}\omega^\ell$.

The standard mollification estimates and
\eqref{eq:severalupdates1:Dfastlambda} give
\begin{equation}\label{eq:severalupdates1:mollyerrorasd}
        \|F^\ast \lambda-\omega^\ell\|_{L^\infty(\B^n)}
        \aleq
        \ell\|D(F^\ast \lambda)\|_{L^\infty(\B^n)}
        \aleq
        \frac{r}{M}.
\end{equation}

Moreover, using \eqref{eq:severalupdates1:Fasttlambdas0fdi2} (and $\eps_0 \leq 1$), we have
\begin{equation}\label{eq:severalupdates1:mollyeestD2as}
        \|D^a\omega^\ell\|_{L^\infty(\B^n)}
        \aleq
        r\ell^{-a},
        \qquad
        a=0,1,2.
\end{equation}

From \eqref{eq:severalupdates1:Fasttlambdas0fdi2} we also have
\[
 \left \|\frac{\omega^\ell}{r} \right \|_{L^\infty(\B^n)} \leq \eps_0,
\]
so for $\eps_0$ uniform, but sufficiently small we can assume that all coefficients of $\frac{\omega^\ell}{r}$ belong to $B_{\eps^\ast}(0)$ defined in \Cref{pr:geometriclemma}, and we can write
\[
 -\omega^\ell = \sum_{j=1}^{n+1} r\brac{\gamma_j\brac{-\omega^\ell/r}}^2 d\phi_j.
\]

For $j=1,\ldots,n+1$, set
\begin{equation}\label{eq:severalupdates1:Ajdef}
        A_j(x)
        =
        r^{1/2}\theta(x)
        \gamma_j\left(-\frac{\omega^\ell(x)}{r}\right)
\end{equation}
Since $\theta\equiv1$ on $\operatorname{supp}\omega^\ell$, \Cref{pr:geometriclemma}, gives
\begin{equation}\label{eq:stage-principal-cancellation}
        -\omega^\ell  = \sum_{j=1}^{n+1} A_j^2\,d\phi_j \quad \text{in $\B^n$}
\end{equation}

From \eqref{eq:geomlemests},
\eqref{eq:severalupdates1:cutoffargbdd}, and
\eqref{eq:severalupdates1:mollyeestD2as}, we obtain
\begin{equation}\label{eq:severalupdates1:AestLinftycrsqert}
        \|A_j\|_{L^\infty(\B^n)}
        \le
        C r^{1/2},
\end{equation}

\begin{equation}\label{eq:severalupdates1:DAestLinfty}
        \|DA_j\|_{L^\infty(\B^n)}
        \aleq
        r^{1/2}\ell^{-1}
        +
        r^{-1/2}\|D\omega^\ell\|_{L^\infty(\B^n)}
        \aleq r^{1/2}\ell^{-1} \overset{\eqref{eq:severalupdates1:elldef}}{\aleq}
        \frac{M\Gamma^2}{r^{1/2}},
\end{equation}
and
\begin{equation}\label{eq:severalupdates1:D2Aasdx}
\begin{split}
        \|D^2A_j\|_{L^\infty(\B^n)}
        &\aleq
        r^{1/2}\ell^{-2}
        +
         r^{-1/2}\ell^{-1}
        \|D\omega^\ell\|_{L^\infty(\B^n)}
        \\
        &\quad
        +
         r^{-3/2}
        \|D\omega^\ell\|_{L^\infty(\B^n)}^2
        +
         r^{-1/2}
        \|D^2\omega^\ell\|_{L^\infty(\B^n)}
        \\
        &\aleq
r^{1/2}\ell^{-2}
        \\
        &\overset{\eqref{eq:severalupdates1:elldef}}{\aleq}
        \frac{M^2\Gamma^4}{r^{3/2}}.
\end{split}
\end{equation}

For $j=1,\ldots,n+1$, recall from \Cref{la:wigglebookkeeper},
\[
        m:\{1,\ldots,n+1\}\to\{1,\ldots,d\},
        \qquad
        \nu:\{1,\ldots,n+1\}\to\{1,\ldots,s\},
\]
and, whenever $i<j$ and $m(i)=m(j)$, one has
$\nu(i)<\nu(j)$.

Define
\begin{equation}\label{eq:severalupdates1:Lfrequencydef}
        L_j
        :=
        \frac{M^{2\nu(j)}\Gamma^2}{r},
        \qquad j=1,\ldots,n+1.
\end{equation}
By \eqref{eq:severalupdates1:rleqMGammadef}, $L_j\ge1$.

Starting with $F_0:=F$, apply
\Cref{pr:oneupdate} successively for
$j=1,\ldots,n+1$, using the data
\[
        F_{j-1},
        \qquad
        m(j),
        \qquad
        A_j,
        \qquad
        \phi_j,
        \qquad
        L_j .
\]

To be precise:
Choose a constant $C_0\ge1$, depending only on $n,d,N$ and on the fixed
profiles in \Cref{pr:oneupdate}, large enough
for the induction below.

Set
\[
        F_0:=F.
\]
For $j=0,\ldots,n+1$ and $m\in\{1,\ldots,d\}$ define
\begin{equation}\label{eq:severalupdates1:bookkkeeperkappadef}
        \kappa_j(m)
        :=
        \max\bigl(\{0\}\cup\{\nu(i):1\leq i\leq j,\ m(i)=m\}\bigr).
\end{equation}
Thus $\kappa_0(m)=0$ for every $m$. Moreover, by the properties of $m(\cdot)$ and $\nu(\cdot)$ in
\Cref{la:wigglebookkeeper},
\begin{equation}\label{eq:severalupdates1:kappavsnujest}
        \kappa_{j-1}(m(j))\leq \nu(j)-1 \quad \forall j \geq 1
\end{equation}

We also observe, importantly,
\begin{equation}\label{eq:stagekappaleqs}
\kappa_j(m) \leq s \quad \forall j, m
\end{equation}


We construct $F_j$ inductively so that, for every $j=0,\ldots,n+1$

\begin{itemize} \item For every
$m\in\{1,\ldots,d\}$,
\begin{equation}\label{eq:severalupdates1:inductionDFj2mLinfty}
        \|DF_j^{2m}\|_{L^\infty(\B^n)}
        +
        \|DF_j^{2m+1}\|_{L^\infty(\B^n)}
        \le
        C_j M^{\kappa_j(m)}\Gamma,
\end{equation}
\begin{equation}\label{eq:severalupdates1:inductionD2Fj2mLinfty}
        \|D^2F_j^{2m}\|_{L^\infty(\B^n)}
        +
        \|D^2F_j^{2m+1}\|_{L^\infty(\B^n)}
        \le
        C_j\frac{M^{3\kappa_j(m)}\Gamma^3}{r},
\end{equation}
and
\begin{equation}\label{eq:severalupdates1:inductionFj2mLinftyCj}
        \|F_j^{2m}\|_{L^\infty(\B^n)}
        \le
        C_j(1+\Upsilon).
\end{equation}

Here $C_j$ is a constant that will change with $j$ but be a product of uniform constants.

\item we have
\begin{equation}\label{eq:FjmFsupport}
        \operatorname{supp}(F_j-F)
        \subset
        \operatorname{supp}\theta
        \subset
        \{x\in \B^n:\dist(x,K)<2\ell\}.
\end{equation}
\item
\begin{equation}\label{eq:goal:stagepassiveunchanged}
        F_j^k=F^k
        \qquad\text{for }k=2d+2,\ldots,N.
\end{equation}
\item If $j \geq 1$,
\begin{equation}\label{eq:stageREMstep}
        F_j^\ast\lambda-F_{j-1}^\ast\lambda
        =
        A_j^2\,d\phi_j+E_j.
\end{equation}
where
\begin{equation}\label{eq:goal:Ejest1}
        \|E_j\|_{L^\infty(\B^n)}
        \aleq
        \frac{r}{M}.
\end{equation}
\begin{equation}\label{eq:goal:Ejest2}
        \|DE_j\|_{L^\infty(\B^n)}
        \aleq
        M^{2s}\Gamma^2.
\end{equation}
\item We have \begin{equation}
 \label{eq:Hjest}
 \|H_j\|_{L^\infty(\B^n)} \aleq
        (1+\Upsilon)\frac{r}{M^{\nu(j)}\Gamma}
\end{equation}
\begin{equation}
 \label{eq:Hjestv2}
        \|DH_j\|_{L^\infty(\B^n)}
        \aleq
        (1+\Upsilon)M^{\nu(j)}\Gamma.
\end{equation}

\end{itemize}
\underline{For $j=0$}, these estimates follow from
\eqref{eq:severalupdates1:DF2mLinftyest},
\eqref{eq:severalupdates1:D2F2mLinftyestD2ver}, and
\eqref{eq:severalupdates1:F2mlinftyupsilonasd}, for a uniform $C_0>0$ if necessary. \eqref{eq:FjmFsupport}, \eqref{eq:goal:stagepassiveunchanged} are trivial.

\eqref{eq:goal:Ejest1}, \eqref{eq:goal:Ejest2}, \eqref{eq:stageREMstep} are empty.

\underline{Induction $(j-1)\to j$}:
Let $j\ge1$, and assume that $F_0,\ldots,F_{j-1}$ have been constructed and
satisfy
\eqref{eq:severalupdates1:inductionDFj2mLinfty},
\eqref{eq:severalupdates1:inductionD2Fj2mLinfty}, and
\eqref{eq:severalupdates1:inductionFj2mLinftyCj}.

Set
\[
        \Gamma_j:=C_{j-1}M^{\nu(j)-1}\Gamma,
        \qquad
        \Upsilon_j:=C_{j-1}(1+\Upsilon),
        \qquad
        \Lambda_j
        :=
        C_{j-1}\frac{M^{3(\nu(j)-1)}\Gamma^3}{r}.
\]
By \eqref{eq:severalupdates1:kappavsnujest} and the induction hypothesis,
\[        \|DF_{j-1}^{2m(j)}\|_{L^\infty(\B^n)}
        +
        \|DF_{j-1}^{2m(j)+1}\|_{L^\infty(\B^n)}
        \le
        C_{j-1}M^{\kappa_{j-1}(m(j))}\Gamma
        \le
        \Gamma_j,
\]
\[        \|F_{j-1}^{2m(j)}\|_{L^\infty(\B^n)}
        \le
        C_{j-1}(1+\Upsilon)
        =
        \Upsilon_j,
\]
and\[
        \|D^2F_{j-1}^{2m(j)}\|_{L^\infty(\B^n)}
        +
        \|D^2F_{j-1}^{2m(j)+1}\|_{L^\infty(\B^n)}
        \le
        C_{j-1}\frac{M^{3\kappa_{j-1}(m(j))}\Gamma^3}{r}
        =
        \Lambda_j.
\]
Therefore we can apply \Cref{pr:oneupdate}
\[
        F_{j-1},
        \qquad
        m(j),
        \qquad
        A_j,
        \qquad
        L_j,
        \qquad
        \phi_j\equiv \langle \xi_j,x\rangle
\]
with input constants $\Gamma_j,\Upsilon_j,\Lambda_j$.

Thus we obtain
\[
        H_j\in C_c^\infty(\B^n;\R^N),
        \qquad
        \operatorname{supp}H_j\subset \operatorname{supp}A_j,
\]
and we set
\[        F_j:=F_{j-1}+H_j.
\]
By \eqref{eq:wiggleonce:nochange}, all components of $H_j$ except $1,2m(j),2m(j)+1$ are zero. In particular we have \eqref{eq:goal:stagepassiveunchanged}. Also \eqref{eq:FjmFsupport} is true since $\operatorname{supp} A_j \subset \operatorname{supp} \theta$

By \eqref{eq:singleupdate:GastlambdamFastlambda} we have \eqref{eq:stageREMstep}

Moreover, \eqref{eq:singleupdatererrorterm} and
\eqref{eq:oneupate:goaladsldjder}, applied with
$\Gamma_j,\Lambda_j$, give
\[
        \begin{split}
        \|E_j\|_{L^\infty(\B^n)}
        \aleq&
        L_j^{-1/2}\|A_j\|_{L^\infty(\B^n)}\Gamma_j\\
        \aleq& \frac{M^{-\nu(j)}\Gamma^{-1}}{r^{-1/2}} r^{1/2} M^{\nu(j)-1}\Gamma\\
        =&\frac{r}{M}\\
        \end{split}
\]
This is \eqref{eq:goal:Ejest1}

Also (observe that $\nu(j) \leq s$)
\[
\begin{split}
        \|DE_j\|_{L^\infty(\B^n)}
        \aleq&
        \left(
        L_j^{1/2}\|A_j\|_{L^\infty(\B^n)}
        +
        L_j^{-1/2}\|DA_j\|_{L^\infty(\B^n)}
        \right)\Gamma_j
        +
        L_j^{-1/2}\|A_j\|_{L^\infty(\B^n)}\Lambda_j
        \\
        \aleq&
        \left(
        \brac{\frac{M^{2\nu(j)}\Gamma^2}{r}}^{1/2} r^{1/2}
        +
        \brac{\frac{M^{2\nu(j)}\Gamma^2}{r}}^{-1/2} \frac{M\Gamma^2}{r^{1/2}}
        \right)M^{\nu(j)-1}\Gamma
        +
        \brac{\frac{M^{2\nu(j)}\Gamma^2}{r}}^{-1/2} r^{1/2} \frac{M^{3(\nu(j)-1)}\Gamma^3}{r}
        \\
        \aleq&
        M^{2\nu(j)}\Gamma^2\\
        \aleq&M^{2s} \Gamma^2.
\end{split}
\]
This is \eqref{eq:goal:Ejest2}.

We next record the changed coordinate estimates from
\Cref{pr:oneupdate}. From
\eqref{eq:singleupdate:C0est1},
\eqref{eq:singleupdate:C1est1}, and
\eqref{eq:singleupdate:C2asdasdest1}, and using \eqref{eq:severalupdates1:Lfrequencydef}, and then \eqref{eq:severalupdates1:AestLinftycrsqert},
\eqref{eq:severalupdates1:DAestLinfty}, and
\eqref{eq:severalupdates1:D2Aasdx} we have
\begin{equation}\label{eq:changeHjC0}
        \|H_j^{2m(j)}\|_{L^\infty(\B^n)}
        +
        \|H_j^{2m(j)+1}\|_{L^\infty(\B^n)}
        \aleq
        L_j^{-1/2}\|A_j\|_{L^\infty(\B^n)} \overset{\eqref{eq:severalupdates1:AestLinftycrsqert}}{\aleq} \frac{r}{M^{\nu(j)}\Gamma}
\end{equation}
\begin{equation}\label{eq:DHj2mjest23f}
\begin{split}
        \|DH_j^{2m(j)}\|_{L^\infty(\B^n)}
        +
        \|DH_j^{2m(j)+1}\|_{L^\infty(\B^n)}
        \aleq&
        L_j^{1/2}\|A_j\|_{L^\infty(\B^n)}
        +
        L_j^{-1/2}\|DA_j\|_{L^\infty(\B^n)}
        \\
        \overset{\eqref{eq:severalupdates1:DAestLinfty}}{\aleq}&
        \brac{\frac{M^{2\nu(j)}\Gamma^2}{r}}^{1/2} r^{1/2}
        +
        \brac{\frac{M^{2\nu(j)}\Gamma^2}{r}}^{-1/2} \frac{M\Gamma^2}{r^{1/2}}
        \\
        =&
        M^{\nu(j)}\Gamma
        +
        M^{1-\nu(j)} \Gamma
        \\
        \overset{M \geq 1}{\aleq}&        M^{\nu(j)} \Gamma.
\end{split}
\end{equation}
and
\begin{equation}\label{eq:D2Hjest}
\begin{split}
        &\|D^2H_j^{2m(j)}\|_{L^\infty(\B^n)}
        +
        \|D^2H_j^{2m(j)+1}\|_{L^\infty(\B^n)}\\
        \aleq &
        L_j \brac{L_j^{1/2}\|A_j\|_{L^\infty(\B^n)}
        +
        L_j^{-1/2}\|DA_j\|_{L^\infty(\B^n)}}
        +
        L_j^{-1/2}\|D^2A_j\|_{L^\infty(\B^n)}
        \\
        \aleq&L_j\, M^{\nu(j)} \Gamma
        +
        L_j^{-1/2} \frac{M^2\Gamma^4}{r^{3/2}}
        \\
\aleq&\frac{M^{3\nu(j)}\Gamma^3}{r}\,
        +
         \frac{M^{2-\nu(j)}\, \Gamma^3}{r}
        \\
\aleq&\frac{M^{3\nu(j)}\Gamma^3}{r}\,
        \end{split}
\end{equation}

\eqref{eq:singleupdate:HLinft21asd}, applied with $\Upsilon_j$, and by
\eqref{eq:severalupdates1:AestLinftycrsqert} and \eqref{eq:severalupdates1:Lfrequencydef},
\[
\begin{split}
        \|H_j\|_{L^\infty(\B^n)}
        &\aleq
        (1+\Upsilon_j)
        \left(
        L_j^{-1/2}\|A_j\|_{L^\infty(\B^n)}
        +
        L_j^{-1}\|A_j\|_{L^\infty(\B^n)}^2
        \right)
        \\
        &\aleq
        (1+\Upsilon)
        \left(
        \frac{r}{M^{\nu(j)}\Gamma}
        +\brac{
        \frac{r}{M^{\nu(j)}\Gamma}}^2
        \right)
        \\
        &\aleq
        (1+\Upsilon)\frac{r}{M^{\nu(j)}\Gamma}.
\end{split}
\]
Here we used also that by  \eqref{eq:severalupdates1:rleqMGammadef} we have $\frac{r}{M\Gamma} \leq 1$, so $\frac{r}{M^{\nu(j)} \Gamma} \leq 1$. Thus \eqref{eq:Hjest} holds.

Also, by \eqref{eq:singleupdate:DHLinftyasldkj}, applied with
$\Gamma_j,\Upsilon_j$, we have
\[
\begin{split}
        \|DH_j\|_{L^\infty(\B^n)}
        \aleq
        (1+\Upsilon_j)\Big(
        &L_j^{1/2}\|A_j\|_{L^\infty}
        +
        L_j^{-1/2}\|DA_j\|_{L^\infty}
        \\
        &+
        \Gamma_jL_j^{-1/2}\|A_j\|_{L^\infty}
        +
        L_j^{-1}\|A_j\|_{L^\infty}\|DA_j\|_{L^\infty}
        +
        \|A_j\|_{L^\infty}^2
        \Big).
\end{split}
\]
The five terms inside the parentheses are estimated as follows:
\[
        L_j^{1/2}\|A_j\|_{L^\infty}
        \aleq
        M^{\nu(j)}\Gamma,
\]
\[
        L_j^{-1/2}\|DA_j\|_{L^\infty}
        \aleq
        M^{1-\nu(j)}\Gamma
        \aleq
        M^{\nu(j)}\Gamma,
\]
\[
        \Gamma_jL_j^{-1/2}\|A_j\|_{L^\infty}
        \aleq
        M^{\nu(j)-1}\Gamma
        \frac{r}{M^{\nu(j)}\Gamma}
        =
        \frac{r}{M}
        \le
        \Gamma
        \le
        M^{\nu(j)}\Gamma,
\]
\[
\begin{split}
        L_j^{-1}\|A_j\|_{L^\infty}\|DA_j\|_{L^\infty}
        &\aleq
        \frac{r}{M^{2\nu(j)}\Gamma^2}
        r^{1/2}
        \frac{M\Gamma^2}{r^{1/2}}
        \\
        &=
        rM^{1-2\nu(j)}
        \le
        M^{2-2\nu(j)}\Gamma
        \le
        M^{\nu(j)}\Gamma,
\end{split}
\]
and
\[
        \|A_j\|_{L^\infty}^2
        \aleq
        r
        \le
        M\Gamma
        \le
        M^{\nu(j)}\Gamma.
\]
Since $1+\Upsilon_j\aleq 1+\Upsilon$, we conclude that \eqref{eq:Hjestv2} holds

Now fix
$m\in\{1,\ldots,d\}$. Since every correction $H_i$ has zero coordinates outside its
assigned block $2m(i)$, $2m(i)+1$, $1$, we have
\[
        F_j^{2m}
        =
        F^{2m}
        +
        \sum_{\substack{1\leq i\leq j\\ m(i)=m}}
        H_i^{2m},
        \qquad
        F_j^{2m+1}
        =
        F^{2m+1}
        +
        \sum_{\substack{1\leq i\leq j\\ m(i)=m}}
        H_i^{2m+1}.
\]

Recall \eqref{eq:severalupdates1:bookkkeeperkappadef}. By \eqref{eq:severalupdates1:DF2mLinftyest} and
\eqref{eq:DHj2mjest23f},
\[
\begin{split}
        \|DF_j^{2m}\|_{L^\infty(\B^n)}
        +
        \|DF_j^{2m+1}\|_{L^\infty(\B^n)}
        &\aleq
        \Gamma
        +
        \sum_{\substack{1\leq i\leq j\\ m(i)=m}}
        M^{\nu(i)}\Gamma
        \\
        &\aleq
        C_jM^{\kappa_j(m)}\Gamma,
\end{split}
\]
This proves
\eqref{eq:severalupdates1:inductionDFj2mLinfty} at level $j$.

Similarly, by \eqref{eq:severalupdates1:D2F2mLinftyestD2ver} and
\eqref{eq:D2Hjest},
\[
\begin{split}
        \|D^2F_j^{2m}\|_{L^\infty(\B^n)}
        +
        \|D^2F_j^{2m+1}\|_{L^\infty(\B^n)}
        &\aleq
        \frac{\Gamma^3}{r}
        +
        \sum_{\substack{1\leq i\leq j\\ m(i)=m}}
        \frac{M^{3\nu(i)}\Gamma^3}{r}
        \\
        &\aleq\frac{M^{3\kappa_j(m)}\Gamma^3}{r}.
\end{split}
\]
This proves \eqref{eq:severalupdates1:inductionD2Fj2mLinfty} at level $j$.

Finally, by \eqref{eq:severalupdates1:F2mlinftyupsilonasd},
\eqref{eq:changeHjC0}, and using that by \eqref{eq:severalupdates1:rleqMGammadef} $r\leq M\Gamma$,
\[
\begin{split}
        \|F_j^{2m}\|_{L^\infty(\B^n)}
        &\le
        \Upsilon
        +
        \sum_{\substack{1\leq i\leq j\\ m(i)=m}}
        \|H_i^{2m}\|_{L^\infty(\B^n)}
        \\
        &\aleq
        \Upsilon
        +
        \sum_{\substack{1\leq i\leq j\\ m(i)=m}}
        \frac{r}{M^{\nu(i)}\Gamma}
        \\
        &\aleq
        \Upsilon+\frac{r}{M\Gamma}
        \aleq
        1+\Upsilon.
\end{split}
\]

This proves \eqref{eq:severalupdates1:inductionFj2mLinftyCj} at level $j$.

\underline{Hence the induction
closes.}

Set\[
        G:=F_{n+1}.
\]
\eqref{eq:severalupdates1:outputsupp} follows from \eqref{eq:FjmFsupport}.
\eqref{eq:goal:stagepassiveunchanged} proves \eqref{eq:severalupdates1:outputnochange}.

Taking $j=n+1$ in
\eqref{eq:severalupdates1:inductionDFj2mLinfty} and
\eqref{eq:severalupdates1:inductionD2Fj2mLinfty}, and using
\eqref{eq:stagekappaleqs}, gives
\[
        \max_{1\leq m\leq d}
        \|DG^{2m}\|_{L^\infty(\B^n)}+\|DG^{2m+1}\|_{L^\infty(\B^n)}
        \aleq
        M^s\Gamma,
\]
and
\[
        \max_{1\leq m\leq d}
       \|D^2G^{2m}\|_{L^\infty(\B^n)}+\|D^2G^{2m+1}\|_{L^\infty(\B^n)}
        \aleq
        \frac{M^{3s}\Gamma^3}{r}.
\]
These are \eqref{eq:severalupdates1:outputDG2nNsGanna} and
\eqref{eq:severalupdates1:outputD2G2nNsGanna}.

Summing
\eqref{eq:stageREMstep} over $j=1,\ldots,n+1$ and using
\eqref{eq:stage-principal-cancellation}, we obtain
\begin{equation}\label{eq:GisFplusEjaslkdj}
        G^\ast\lambda
        =
        F^\ast\lambda-\omega^\ell
        +
        \sum_{j=1}^{n+1}E_j.
\end{equation}

By  \eqref{eq:goal:Ejest1}
using also \eqref{eq:severalupdates1:mollyerrorasd},
\[
        \|G^\ast\lambda\|_{L^\infty(\B^n)}
        \aleq
        \frac{r}{M}.
\]
This proves \eqref{eq:severalupdates1:outputGastlambda}.

Also, by \eqref{eq:severalupdates1:Dfastlambda},
\eqref{eq:severalupdates1:mollyeestD2as}, and
\eqref{eq:severalupdates1:elldef},
\[
\begin{split}
        \|D(F^\ast\lambda-\omega^\ell)\|_{L^\infty(\B^n)}
        &\le
        \|D(F^\ast\lambda)\|_{L^\infty(\B^n)}
        +
        \|D\omega^\ell\|_{L^\infty(\B^n)}
        \\
        &\aleq
        \Gamma^2+\frac{r}{\ell}
        \\
        &\aleq
        M\Gamma^2
        \overset{s \geq 1}{\aleq}
        M^{2s}\Gamma^2.
\end{split}
\]
Combining this with \eqref{eq:GisFplusEjaslkdj} and \eqref{eq:goal:Ejest2} yields
\[
        \|D(G^\ast\lambda)\|_{L^\infty(\B^n)}
        \aleq
        M^{2s}\Gamma^2.
\]
This proves \eqref{eq:severalupdates1:outputDGastlambdasad33e}.

It remains to estimate $G-F=\sum_{j=1}^{n+1} H_j$. Since $\nu(j) \geq 1$ from \eqref{eq:Hjest} we readily find
\[
        \|G-F\|_{L^\infty(\B^n)}
        \aleq
        (1+\Upsilon)\frac{r}{M\Gamma}.
\]
This proves \eqref{eq:severalupdates1:outputGmF}.

From \eqref{eq:Hjestv2}, using that $\nu(j) \leq s$ we also find
\[
        \|D(G-F)\|_{L^\infty(\B^n)}
        \aleq
        (1+\Upsilon)\, M^s\, \Gamma.
\]
This proves \eqref{eq:severalupdates1:outputDGmF} and completes the proof.
\end{proof}

\begin{lemma}[Mollification]
\label{la:mollypfaff}
Let
\[
        \omega\in C^1(\overline{\B^n};\Lambda^1\R^n)
\]
and let
\[
        \|\omega\|_{L^\infty(\B^n)}\leq r,
        \qquad
        \|D\omega\|_{L^\infty(\B^n)}\leq \Gamma^2.
\]
Let $M\ge1$ and set
\begin{equation}\label{eq:absolute-mollification-ell}
        \ell:=\frac{r}{M\Gamma^2}.
\end{equation}
Then there is a smooth one-form $\omega^\ell$ on $\overline{\B^n}$ such that
\begin{equation}\label{eq:absolute-mollification-error}
        \|\omega-\omega^\ell\|_{L^\infty(\B^n)}
        \le
        C\frac rM,
\end{equation}
and 
\[
 \|\omega_{\ell}\|_{L^\infty(\B^n)}\leq C \|\omega\|_{L^\infty(\B^n)}
\]
and 
\begin{equation}\label{eq:absolute-mollification-derivatives}
        \|D^a\omega^\ell\|_{L^\infty(\B^n)}
        \le
        C r\ell^{-a},
        \qquad
        a=0,1,2.
\end{equation}
Here $C$ depends only on $n$.
\end{lemma}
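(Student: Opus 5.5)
The plan is a standard mollification argument, with one twist: the mollifier must not see outside $\B^n$. Since $\omega$ is only defined on $\overline{\B^n}$, I first extend it to all of $\R^n$ with comparable bounds. Concretely, I would use a Whitney/reflection-type $C^1$ extension operator $\mathcal{E}$ for the ball, or simply the radial reflection $x\mapsto x/|x|^2$ near $\partial\B^n$ combined with a fixed cutoff. This gives $\tilde\omega=\mathcal E\omega\in C^1(\R^n)$ with
\[
\|\tilde\omega\|_{L^\infty(\R^n)}\le C\|\omega\|_{L^\infty(\B^n)},\qquad \|D\tilde\omega\|_{L^\infty(\R^n)}\le C\bigl(\|D\omega\|_{L^\infty(\B^n)}+\|\omega\|_{L^\infty(\B^n)}\bigr).
\]
The cutoff creates a lower-order term, so in fact the derivative bound is $C(\Gamma^2+r)$.

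To avoid that lower-order term, I would instead use a pure reflection across the sphere on the annulus $1\le|x|\le 2$, extended constantly in the radial direction beyond. An easier alternative is to use the interior dilation $\omega_t(x):=\omega(tx)$ with $t=(1+\ell)^{-1}$. Then $\omega_t$ is defined on $(1+\ell)\B^n$, and
\[
\|\omega-\omega_t\|_{L^\infty(\B^n)}\le \|D\omega\|_\infty(1-t)\le \Gamma^2\ell=\frac rM .
\]
This perturbation is harmless for \eqref{eq:absolute-mollification-error}. Moreover $\|\omega_t\|_\infty\le\|\omega\|_\infty$ and $\|D\omega_t\|_\infty\le\Gamma^2$. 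I would commit to this dilation trick, because it gives constants independent of everything except $n$.

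Next set $\omega^\ell:=\rho_\ell\ast\omega_t$ on $\B^n$, which is well defined since $B_\ell(x)\subset(1+\ell)\B^n$ for $x\in\B^n$. The mollifier $\rho$ is a standard one with $\int\rho=1$, supported in the unit ball. Then:

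1. Standard bounds give $\|\omega^\ell\|_\infty\le\|\omega_t\|_\infty\le\|\omega\|_\infty\le r$.
2. They also give $\|\omega^\ell-\omega_t\|_\infty\le\ell\|D\omega_t\|_\infty\le\ell\Gamma^2=r/M$.
3. Combining item 2 with the dilation error gives \eqref{eq:absolute-mollification-error} with $C=2$.
4. Putting the derivatives on the kernel gives $\|D^a\omega^\ell\|_\infty\le\|D^a\rho_\ell\|_{L^1}\|\omega_t\|_\infty\le C\ell^{-a}r$ for $a=0,1,2$, which is \eqref{eq:absolute-mollification-derivatives}.

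Smoothness up to $\overline{\B^n}$ is clear, since $\omega^\ell$ is smooth on a neighborhood of $\overline{\B^n}$.

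One degenerate regime needs checking: if $\ell$ is large, i.e. $r\ge M\Gamma^2$, then $B_\ell(x)$ can be huge. Even then the dilation construction still works: $(1+\ell)\B^n$ contains $B_\ell(x)$, and the dilation error remains bounded by $\Gamma^2\cdot\ell/(1+\ell)\le r/M$.

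The main obstacle is only this boundary issue. Once the dilation is in place, everything else is the textbook mollification estimate.
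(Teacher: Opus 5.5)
Your proof is correct, but it handles the boundary issue differently from the paper. The paper extends $\omega$ coefficientwise to all of $\R^n$ by the radially constant extension $\widetilde\omega(x):=\omega(x/|x|)$ for $|x|\ge 1$. It checks that this extension keeps the sup norm and satisfies $\|D\widetilde\omega\|_{L^\infty(\R^n)}\le C\|D\omega\|_{L^\infty(\B^n)}$, using $|D(x/|x|)|\le 1/|x|$. It then mollifies $\widetilde\omega$ directly, so $\omega-\omega^\ell$ is controlled by the textbook bound $C\ell\|D\omega\|\le Cr/M$. For $a=1$ it bounds $\|D\omega^\ell\|\le C\|D\widetilde\omega\|\le C\Gamma^2\le CM\Gamma^2=Cr\ell^{-1}$ instead of moving the derivative onto the kernel.

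You instead shrink inward: $\omega_t(x)=\omega(tx)$ with $t=(1+\ell)^{-1}$ lives on the closed ball of radius $1+\ell$ with the same bounds. The price is one extra error $\Gamma^2\ell/(1+\ell)\le r/M$, which is of the same order as the mollification error.

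The paper's route gives a single, $\ell$-independent extension and a mollified form defined on all of $\R^n$. It must, however, justify that the glued function is globally Lipschitz across $\partial\B^n$. Your route needs no gluing at all: you only use that $\omega_t$ is $C^1$ on a convex set. It also yields explicit constants, $C=1$ for the sup bound and $C=2$ for the approximation error.

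One cosmetic point: if $\operatorname{supp}\rho$ is the whole closed unit ball, then $\rho_\ell\ast\omega_t$ is a priori defined only on $\overline{\B^n}$, not on a neighborhood of it. To justify your last smoothness sentence, either take $\operatorname{supp}\rho\subset B_{1/2}(0)$ or use $t=(1+2\ell)^{-1}$. Alternatively, extend $\omega_t$ by zero outside its ball before convolving. None of these changes the estimates.
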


\begin{proof}
Any function $f: \B^n \to \R$ can be extended to a map $\widetilde{f} : \R^n \to \R$, $\widetilde{f}(x) := f(x/|x|)$ for $|x| \geq 1$ such that
\[
\|\widetilde{f}\|_{L^\infty(\R^n)}\leq \|f\|_{L^\infty(\B^n)}
\]
\[
\|D\widetilde{f}\|_{L^\infty(\R^n)} \aleq \|\abs{Df(x/|x|)} \frac{1}{|x|}\|_{L^\infty(\R^n\setminus \B^n)} + \|Df\|_{L^\infty(\B^n)} \leq 2 \|Df\|_{L^\infty(\B^n)}.
\]
Thus, we can extend $\omega$ coefficient-wise to a Lipschitz one-form $\widetilde\omega \in \lip(\R^N,\Ep^1\R^N)$ on
$\R^n$, and with dimensional constants $C>0$
\[
        \|\widetilde\omega\|_{L^\infty(\R^n)}
        \le
        C\|\omega\|_{L^\infty(\B^n)},
        \qquad
        \|D\widetilde\omega\|_{L^\infty(\R^n)}
        \le
        C\|D\omega\|_{L^\infty(\B^n)}.
\]
Let $\rho$ be a standard mollifier and set $\omega^\ell:=\rho_\ell*\widetilde\omega$.
Then
\[
        \|\omega-\omega^\ell\|_{L^\infty(\B^n)}
        \le
        C\ell\|D\omega\|_{L^\infty(\B^n)}
        \le
        C\frac rM,
\]
which proves \eqref{eq:absolute-mollification-error}. Also,
\[
        \|\omega^\ell\|_{L^\infty(\B^n)} \aleq \|\omega\|_{L^\infty}\leq r
\]
and
\[
        \|D^2\omega^\ell\|_{L^\infty(\B^n)}
        \le
        C\ell^{-2}\|\widetilde\omega\|_{L^\infty(\R^n)}
        \le
        C r\ell^{-2}.
\]
Lastly,
\[
        \|D\omega^\ell\|_{L^\infty(\B^n)}
        \le
        C\|D\widetilde\omega\|_{L^\infty(\R^n)}
        \le
        C\Gamma^2
        \le
        C M\Gamma^2
        =
        C r\ell^{-1},
\]
Thus \eqref{eq:absolute-mollification-derivatives} follows.
\end{proof}

Now we adapt the ideas from \Cref{pr:severalupdates1} above to the situation when we cannot assume that $F^\ast\lambda = 0$ close to $\partial \B^n$. Moreover, observe that we introduce a new parameter that restricts which coordinates will be changed: $b \leq d$.
\begin{proposition}
\label{pr:keepcoefficientsargnobd}
Let $b\in\{1,\ldots,d\}$ and set $s:=\left\lceil\frac{n+1}{b}\right\rceil$.

There exist constants $\eps_{0,b}>0$, $C_{\mathrm{st},b}\ge1$, depending only on $n,d,N,b$  with the following property.

Let
\[
        F\in C^\infty(\overline{\B^n};\R^N),
        \qquad
        M\ge1,
        \qquad
        \Gamma\ge1,
        \qquad
        \Upsilon\ge0,
        \qquad
        r>0.
\]
Assume \eqref{eq:severalupdates1:rleqMGammadef} without the $\ell$-condition, i.e.
\begin{equation}\label{eq:vs2propassaslkdj-scale}
        r\leq M\Gamma,
        \qquad
        r\leq M^2\Gamma^2,
\end{equation}
and \eqref{eq:severalupdates1:Fasttlambdas0fdi2}, i.e.
\[        \|F^\ast\lambda\|_{L^\infty(\B^n)}
        \le
        \eps_{0,b}r,
\]
and \eqref{eq:severalupdates1:DF2mLinftyest} but only up to order $b$, i.e.
\begin{equation}\label{eq:vs2propassaslkdj-first}
        \max_{1\leq m\leq b}
        \|D(F^{2m},F^{2m+1})\|_{L^\infty(\B^n)}
        \le
        \Gamma,
\end{equation}

and \eqref{eq:severalupdates1:D2F2mLinftyestD2ver} also only up to order $b$, i.e.
\begin{equation}\label{eq:vs2propassaslkdj-second}
        \max_{1\leq m\leq b}
        \|D^2(F^{2m},F^{2m+1})\|_{L^\infty(\B^n)}
        \le
        \frac{\Gamma^3}{r},
\end{equation}
and \eqref{eq:severalupdates1:Dfastlambda}, i.e.
\[
        \|D(F^\ast\lambda)\|_{L^\infty(\B^n)}
        \le
        \Gamma^2,
\]
and \eqref{eq:severalupdates1:F2mlinftyupsilonasd} again only up to order $b$, i.e.
\begin{equation}\label{eq:vs2propassaslkdj-Upsilon}
        \max_{1\leq m\leq b}
        \|F^{2m}\|_{L^\infty(\B^n)}
        \le
        \Upsilon.
\end{equation}
Then there exists $G\in C^\infty(\overline{\B^n};\R^N)$
such that \eqref{eq:severalupdates1:outputGastlambda} holds, i.e.
\[
        \|G^\ast\lambda\|_{L^\infty(\B^n)}
        \le
        C_{\mathrm{st},b}\frac rM,
\]
and we have \eqref{eq:severalupdates1:outputGmF}
i.e.
\[
        \|G-F\|_{L^\infty(\B^n)}
        \le
        C_{\mathrm{st},b}(1+\Upsilon)\frac{r}{M\Gamma},
\]
and \eqref{eq:severalupdates1:outputDGmF}, i.e.
\[
        \|D(G-F)\|_{L^\infty(\B^n)}
        \le
        C_{\mathrm{st},b}(1+\Upsilon)M^s\Gamma,
\]
and \eqref{eq:severalupdates1:outputDG2nNsGanna} for the first $b$ coordinates i.e.
\begin{equation}\label{eq:vs2propassaslkdj-output-first}
        \max_{1\leq m\leq b}
        \|D(G^{2m},G^{2m+1})\|_{L^\infty(\B^n)}
        \le
        C_{\mathrm{st},b}M^s\Gamma,
\end{equation}
and similarly \eqref{eq:severalupdates1:outputD2G2nNsGanna} for the first $b$ coordinates.
\begin{equation}\label{eq:vs2propassaslkdj-output-second}
        \max_{1\leq m\leq b}
        \|D^2(G^{2m},G^{2m+1})\|_{L^\infty(\B^n)}
        \le
        C_{\mathrm{st},b}\frac{M^{3s}\Gamma^3}{r},
\end{equation}
and we have again \eqref{eq:severalupdates1:outputDGastlambdasad33e} i.e.
\[        \|D(G^\ast\lambda)\|_{L^\infty(\B^n)}
        \le
        C_{\mathrm{st},b}M^{2s}\Gamma^2,
\]
but this time the following coordinates remain unchanged (cf. \eqref{eq:severalupdates1:outputnochange})
\begin{equation}\label{eq:vs2propassaslkdj-output-frozen}
        G^j=F^j
        \qquad\text{for every }j=2b+2,\ldots,N.
\end{equation}
\end{proposition}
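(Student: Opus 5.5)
The plan is to rerun the proof of \Cref{pr:severalupdates1} almost verbatim, with two changes. First, the cutoff $\theta$ and the support hypothesis are dropped, and the mollification is done with \Cref{la:mollypfaff}. Second, the wiggle bookkeeper of \Cref{la:wigglebookkeeper} is used with parameter $b$ instead of $d$, so only the blocks $m\in\{1,\ldots,b\}$ are touched.

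\textbf{Setup.} Set $\omega:=F^\ast\lambda$ and $\ell:=\frac{r}{M\Gamma^2}$. Apply \Cref{la:mollypfaff} to $\omega$; its hypotheses hold since $\|\omega\|_{L^\infty}\le\eps_{0,b}r\le r$ and $\|D\omega\|_{L^\infty}\le\Gamma^2$. This gives $\omega^\ell$ with
\[
\|\omega-\omega^\ell\|_{L^\infty}\aleq \frac rM,\qquad \|\omega^\ell\|_{L^\infty}\le C\eps_{0,b}r,\qquad \|D^a\omega^\ell\|_{L^\infty}\aleq r\ell^{-a}.
\]
Choose $\eps_{0,b}$ so small that $C\eps_{0,b}<\eps^\ast$. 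Then \Cref{pr:geometriclemma} applies pointwise to $-\omega^\ell/r$, and with
\[
A_j:=r^{1/2}\gamma_j(-\omega^\ell/r)\quad\text{(no cutoff)}
\]
we get $-\omega^\ell=\sum_{j=1}^{n+1}A_j^2\,d\phi_j$ on all of $\B^n$. The bounds \eqref{eq:severalupdates1:AestLinftycrsqert}, \eqref{eq:severalupdates1:DAestLinfty}, \eqref{eq:severalupdates1:D2Aasdx} still hold, in fact with better constants, because the $\theta$-terms are gone and \eqref{eq:geomlemests} bounds $\gamma_j$ and its derivatives. The only change is that $A_j\in C^\infty(\overline{\B^n})$ is no longer compactly supported. \Cref{pr:oneupdate} explicitly allows this case, at the cost of losing the support statement, which we do not need here.

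\textbf{Iteration.} Use $m_b,\nu_b$ from \Cref{la:wigglebookkeeper}, so that $m(j)\in\{1,\ldots,b\}$ and $\nu(j)\le s=\lceil\frac{n+1}{b}\rceil$, together with the key monotonicity $\kappa_{j-1}(m(j))\le\nu(j)-1$. Take frequencies $L_j=\frac{M^{2\nu(j)}\Gamma^2}{r}\ge1$, where $L_j\ge1$ follows from \eqref{eq:vs2propassaslkdj-scale}. Perform the $n+1$ single updates from \Cref{pr:oneupdate} with data $F_{j-1},m(j),A_j,\phi_j,L_j$. The induction hypotheses \eqref{eq:severalupdates1:inductionDFj2mLinfty}--\eqref{eq:severalupdates1:inductionFj2mLinftyCj} are now only required for $m\le b$. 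This suffices, because \Cref{pr:oneupdate} only needs bounds on $F^{2m(j)},F^{2m(j)+1}$, and $m(j)\le b$. The inputs at $j=0$ are exactly \eqref{eq:vs2propassaslkdj-first}, \eqref{eq:vs2propassaslkdj-second}, \eqref{eq:vs2propassaslkdj-Upsilon}. The per-step estimates then go through word for word:
\[
\|E_j\|\aleq \frac rM,\qquad \|DE_j\|\aleq M^{2s}\Gamma^2,\qquad \|H_j\|\aleq(1+\Upsilon)\frac{r}{M^{\nu(j)}\Gamma},\qquad \|DH_j\|\aleq(1+\Upsilon)M^{\nu(j)}\Gamma.
\]
The second-derivative bounds for the blocks $m\le b$ also carry over unchanged.

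\textbf{Conclusion.} Set $G:=F_{n+1}$. As in \eqref{eq:GisFplusEjaslkdj},
\[
G^\ast\lambda=(\omega-\omega^\ell)+\sum_{j=1}^{n+1}E_j,
\]
which yields both the $L^\infty$ bound and the $D$-bound. For the latter, note that $\|D(\omega-\omega^\ell)\|\aleq\Gamma^2+r/\ell\aleq M\Gamma^2$. Each $H_j$ changes only coordinates $1,2m(j),2m(j)+1$, and $2m(j)+1\le 2b+1$, which gives \eqref{eq:vs2propassaslkdj-output-frozen}. The constants depend only on $n,d,N,b$, since there are exactly $n+1$ steps.

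\textbf{Main obstacle.} The only step that needs checking is the removal of the cutoff at the boundary. We must verify that \Cref{pr:oneupdate} with $A\in C^\infty(\overline{\B^n})$, and the mollification of \Cref{la:mollypfaff} via the radial extension, give estimates uniform up to $\partial\B^n$. This is where the Lipschitz extension in \Cref{la:mollypfaff} is used in place of the $4\ell<\dist(K,\partial\B^n)$ condition. Everything else is bookkeeping inherited from \Cref{pr:severalupdates1}.
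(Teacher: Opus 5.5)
Your proposal is correct and follows the paper's proof essentially line by line. You mollify $F^\ast\lambda$ via \Cref{la:mollypfaff} with $\ell=r/(M\Gamma^2)$, take $A_j=r^{1/2}\gamma_j(-\omega^\ell/r)$ without a cutoff, and rerun the induction of \Cref{pr:severalupdates1} with $m_b,\nu_b$ in place of $m,\nu$, so that coordinates $2b+2,\ldots,N$ are never touched; your explicit choice $C\eps_{0,b}<\eps^\ast$, which accounts for the constant in $\|\omega^\ell\|_{L^\infty}\le C\|\omega\|_{L^\infty}$, is slightly more careful than the paper's text.
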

\begin{proof}
We choose $\ell$ as in \eqref{eq:severalupdates1:elldef}, observe that we don't need to establish \eqref{eq:severalupdates1:outputsupp}.
Let
\[
        \omega:=F^\ast\lambda,
        \qquad
        \ell:=\frac{r}{M\Gamma^2}.
\]
Apply \Cref{la:mollypfaff} to $\omega$ and obtain
$\omega^\ell$ with \eqref{eq:severalupdates1:mollyerrorasd} and \eqref{eq:severalupdates1:mollyeestD2as}, i.e.
\[
        \|F^\ast \lambda-\omega^\ell\|_{L^\infty(\B^n)}
        \aleq
        \ell\|D(F^\ast \lambda)\|_{L^\infty(\B^n)}
        \aleq
        \frac{r}{M}.
\]
and
\[      \|D^a\omega^\ell\|_{L^\infty(\B^n)}
        \aleq
        r\ell^{-a},
        \qquad
        a=0,1,2.
\]
Again we have from \eqref{eq:severalupdates1:Fasttlambdas0fdi2}  $\left \|\frac{\omega^\ell}{r} \right \|_{L^\infty(\B^n)} \leq \eps_0$, and thus, for $\eps_0$ small enough, $-\omega^\ell/r$ belongs to the ball $B_{\eps^\ast}(0)$ from
\Cref{pr:geometriclemma}.

The following is \eqref{eq:severalupdates1:Ajdef} for $\theta \equiv 1$.
\begin{equation}\label{eq:vs2propassaslkdj-amplitudes}
        A_j
        :=
        r^{1/2}
        \gamma_j\left(-\frac{\omega^\ell}{r}\right),
        \qquad
        j=1,\ldots,n+1.
\end{equation}
Again, \Cref{pr:geometriclemma} gives \eqref{eq:stage-principal-cancellation}, i.e.
\[
        -\omega^\ell
        =
        \sum_{j=1}^{n+1}A_j^2\,d\phi_j
        \qquad\text{in }\B^n.
\]
The computation of
\eqref{eq:severalupdates1:AestLinftycrsqert}, \eqref{eq:severalupdates1:DAestLinfty} \eqref{eq:severalupdates1:D2Aasdx} applies verbatim,
with $\theta\equiv1$.

Let $m_b(j)$ and $\nu_b(j)$ be from \Cref{la:wigglebookkeeper} i.e. as in
\eqref{eq:bookkeeper:asdopasd2def}. Define
\begin{equation}\label{eq:vs2propassaslkdj-frequency}
        L_j:=\frac{M^{2\nu_b(j)}\Gamma^2}{r},
        \qquad
        j=1,\ldots,n+1.
\end{equation}
which is essentially the same as \eqref{eq:severalupdates1:Lfrequencydef}, just with $d$ replaced by $b$.

By \eqref{eq:vs2propassaslkdj-scale}, $L_j\ge1$.

Set $F_0:=F$. For $j=0,\ldots,n+1$ and $m=1,\ldots,b$, define
\begin{equation}\label{eq:vs2propassaslkdj-kappa}
        \kappa_j(m)
        :=
        \max\bigl(\{0\}\cup
        \{\nu_b(i):1\leq i\leq j,\ m_b(i)=m\}\bigr).
\end{equation}
By \Cref{la:wigglebookkeeper},
\begin{equation}\label{eq:vs2propassaslkdj-kappa-before}
        \kappa_{j-1}(m_b(j))\leq \nu_b(j)-1,
        \qquad
        j=1,\ldots,n+1,
\end{equation}
and
\begin{equation}\label{eq:vs2propassaslkdj-kappa-upper}
        \kappa_j(m)\leq s
        \qquad\text{for all }j,m.
\end{equation}

We now repeat the finite induction from the proof of
\Cref{pr:severalupdates1}, essentially verbatim just replacing $d$ by $b$ and using our $\kappa_j$ and $s$.

Namely, we construct $F_j$ inductively so that, for every $j=0,\ldots,n+1$

\begin{itemize} \item \eqref{eq:severalupdates1:inductionDFj2mLinfty} up to $b$: For every
$m\in\{1,\ldots,b\}$,
\[
        \|DF_j^{2m}\|_{L^\infty(\B^n)}
        +
        \|DF_j^{2m+1}\|_{L^\infty(\B^n)}
        \le
        C_j M^{\kappa_j(m)}\Gamma,
\]
\[        \|D^2F_j^{2m}\|_{L^\infty(\B^n)}
        +
        \|D^2F_j^{2m+1}\|_{L^\infty(\B^n)}
        \le
        C_j\frac{M^{3\kappa_j(m)}\Gamma^3}{r},
\]
and
\[
        \|F_j^{2m}\|_{L^\infty(\B^n)}
        \le
        C_j(1+\Upsilon).
\]
Here $C_j$ is a constant that will change with $j$ but be a product of uniform constants.

\item we do not care about \eqref{eq:FjmFsupport}
\item We have \eqref{eq:goal:stagepassiveunchanged} but for more coordinates
\begin{equation}\label{eq:goal:stagepassiveunchangedwithb}
        F_j^k=F^k
        \qquad\text{for }k=2b+2,\ldots,N.
\end{equation}
\item If $j \geq 1$, \eqref{eq:stageREMstep} with estimates \eqref{eq:goal:Ejest1}, \eqref{eq:goal:Ejest2}.
\item Estimates \eqref{eq:Hjest}, \eqref{eq:Hjestv2}
\end{itemize}
The induction argument is verbatim, one just ignores the support properties and replaces $d$ by $b$ -- and then the conclusion is verbatim as well.
\end{proof}

\section{Extending horizontal maps: Proof of Theorem~\ref{th:lambdaext}}\label{s:extension}
Also throughout this section, $\lambda$ is the fixed form from \eqref{eq:pfaffcr}. In view of \Cref{th:pfaffconstantrank} for \Cref{th:lambdaext} we need to prove the following
\begin{theorem}
\label{th:pfaffextension}
Let $N\ge 2d+1$, $d\ge1$, $n\in\{2,3,\ldots\}$.
For $s:=\left\lceil\frac{n+1}{d}\right\rceil$ assume
\begin{equation}\label{eq:alphasdoiuvc}
        \frac12<\alpha<\frac{s+1}{2s+1}.
\end{equation}
For any $F_0\in C^\infty(\overline{\B^n};\R^N)$ such that $f:= F_0\Big |_{\partial \B^n}: \partial \B^n \to \R^N$ satisfies for $\lambda$ from \eqref{eq:pfaffcr}
\[
        f^\ast\lambda=0  \quad \text{on $\partial\B^n$}
\]
there exists $F\in C^\alpha(\overline{\B^n};\R^N)$
such that
\begin{equation}\label{eq:pfaffext:horizontalboundaryFF0}
        F=F_0
        \qquad\text{on }\partial\B^n,
\end{equation}
and
\[
        F^\ast\lambda=0
\]
in the distributional sense in (the closed) $\overline{\B^n}$. Moreover, for any $\beta \in (0,1/2)$ and any $\eps > 0$ we may assume
\begin{equation}\label{eq:FF0closeby}
 \|F-F_0\|_{C^\beta(\B^n)} < \eps.
\end{equation}
\end{theorem}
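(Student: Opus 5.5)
Since $f^\ast\lambda=0$ on $\partial\B^n$ only controls tangential components, the plan is to first modify $F_0$ near the boundary so that $F_0^\ast\lambda$ vanishes to high order at $\partial\B^n$. Then iterate \Cref{pr:severalupdates1} on a sequence of compact sets exhausting $\B^n$, with geometrically decaying $r_k$. Finally, pass to the limit using \Cref{la:distrpullbackvanishes}.

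\emph{Step 1 (boundary normalization).} On the boundary, the normal component of $F_0^\ast\lambda$ can be killed by correcting the first coordinate. Set $\tilde F_0^1 := F_0^1 - (1-|x|)\,\psi(x)\,g(x)$, where $g$ is the normal component of $F_0^\ast\lambda$ at $x/|x|$ and $\psi$ is a cutoff near $\partial\B^n$. Then $\tilde F_0=F_0$ on $\partial\B^n$, and $\tilde F_0^\ast\lambda$ vanishes on $\partial\B^n$, so $|\tilde F_0^\ast\lambda|\aleq 1-|x|$ in $\B^n$. Redefining $F_0:=\tilde F_0$ costs only a small $C^\beta$ change if $\psi$ is supported in a thin annulus.

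\emph{Step 2 (iteration).} Fix parameters $r_k = r_0 M^{-k}$ with $M$ large. Take $\Gamma_k=\Gamma_0 M^{sk}$; the bookkeeping forced by \eqref{eq:severalupdates1:outputDG2nNsGanna} gives $\Gamma_{k+1}\aeq M^s\Gamma_k$. At stage $k$, split $F_k^\ast\lambda=\omega_k^{\rm in}+\omega_k^{\rm bd}$ with a cutoff. Here $\omega_k^{\rm in}$ is supported in $K_k=\{|x|\le 1-\delta_k\}$, and $\omega_k^{\rm bd}$ lives in the annulus $1-|x|<2\delta_k$, where by Step 1 it is of size $\aleq\delta_k$. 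To be able to apply \Cref{pr:severalupdates1}, I would instead restrict the correction: apply it to a cut-off version of $F_k^\ast\lambda$, choosing $\delta_k\aeq r_k$. The leftover near the boundary is then also $\aleq r_k$. The requirement $4\ell_k<\dist(K_k,\partial\B^n)$ is checkable: it reads $r_k/(M\Gamma_k^2)\ll\delta_k\aeq r_k$, which holds. With Step 1 giving smallness $\eps_0r_k$ near the boundary, I expect \eqref{eq:severalupdates1:Fasttlambdas0fdi2} to propagate. The output satisfies $\|F_{k+1}^\ast\lambda\|_\infty\aleq r_k/M = r_{k+1}$, $\|F_{k+1}-F_k\|_\infty\aleq r_k/(M\Gamma_k)$ and $\|D(F_{k+1}-F_k)\|_\infty\aleq M^s\Gamma_k$. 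The hypotheses \eqref{eq:severalupdates1:D2F2mLinftyestD2ver} and \eqref{eq:severalupdates1:Dfastlambda} reproduce themselves with the new $\Gamma_{k+1}, r_{k+1}$, up to constants absorbed by choosing $M$ large. $\Upsilon$ stays bounded because $\sum_k r_k/(M\Gamma_k)<\infty$. Supports stay in the open ball, so $F_k=F_0$ on $\partial\B^n$.

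\emph{Step 3 (H\"older convergence and exponent).} By interpolation, $\|F_{k+1}-F_k\|_{C^\alpha}\aleq (r_k/\Gamma_k)^{1-\alpha}(M^s\Gamma_k)^\alpha$. With $r_k\aeq M^{-k}$ and $\Gamma_k\aeq M^{sk}$, this is $M^{-k(1+s)(1-\alpha)+sk\alpha}$ up to fixed powers of $M$. It is summable iff $(s+1)(1-\alpha)>s\alpha$, i.e.\ $\alpha<\frac{s+1}{2s+1}$, exactly \eqref{eq:alphasdoiuvc}. For $\beta<1/2$ the same computation gives smallness, achieving \eqref{eq:FF0closeby} by choosing $r_0$ small, which makes the total sum less than $\eps$. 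Hence $F_k\to F$ in $C^\alpha(\overline{\B^n})$ with $\|F_k^\ast\lambda\|_\infty\to0$. \Cref{la:distrpullbackvanishes} then gives $F^\ast\lambda=0$ distributionally in $\B^n$. For the closed ball, I would use \Cref{la:smoothapproxpullback} with test forms not vanishing at the boundary, since the uniform convergence $\|F_k^\ast\lambda\|_{L^\infty}\to0$ holds up to $\partial\B^n$.

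The main obstacle is Step 2 near the boundary. Corrections must vanish on $\partial\B^n$, yet the error there must still shrink at rate $r_k$. This is where Step 1 is needed; I may need vanishing of higher order, $|F_0^\ast\lambda|\aleq(1-|x|)^{N_0}$, to match $\delta_k$ to $r_k$ while keeping the $\ell$-condition. I would also need to check that the cutoff does not destroy the derivative bound \eqref{eq:severalupdates1:Dfastlambda}: the cutoff's gradient $\delta_k^{-1}$ times $r_k$ is $O(1)\le\Gamma_k^2$, so it should be fine.
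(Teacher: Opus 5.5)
Your overall scheme is the paper's: iterate \Cref{pr:severalupdates1} with $r_{k+1}\aeq r_k/M$ and $\Gamma_{k+1}\aeq M^s\Gamma_k$, interpolate, and conclude with \Cref{la:distrpullbackvanishes}. A fixed $M$ is a legitimate simplification of the paper's schedule $M_k=\Gamma_k^\gamma$, provided $M$ is chosen large depending on $\alpha$ to absorb the factors $C_0^{k\alpha}$. The gap is the boundary step, and as written it does not close.

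\begin{itemize}
\item \Cref{pr:severalupdates1} acts on a map $F$ with $\supp F^\ast\lambda\Subset\B^n$, so it cannot be ``applied to a cut-off version of $F_k^\ast\lambda$''. You would have to rerun its proof with $\omega:=\chi_kF_k^\ast\lambda$ in place of $F^\ast\lambda$. That gives $F_{k+1}^\ast\lambda=(1-\chi_k)F_k^\ast\lambda+(\omega-\omega^{\ell_k})+\sum_jE_j$.
\item With your choice $\delta_k\aeq r_k$, the leftover $(1-\chi_k)F_k^\ast\lambda=(1-\chi_k)F_0^\ast\lambda$ has size $\aeq r_k$. But \eqref{eq:severalupdates1:Fasttlambdas0fdi2} at the next stage requires size $\le\eps_0r_{k+1}\aeq r_k/M$, so the induction fails by a factor $M$. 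Taking $\delta_k\aeq r_{k+1}$ repairs this with only first-order vanishing. Then $\ell_k\ll\delta_k$ and $\delta_{k+1}\ll\delta_k$, so each new strip is untouched by earlier corrections, and no higher-order vanishing is needed.
\item The $C^\beta$-smallness cannot come from ``choosing $r_0$ small''. The hypothesis $\|F_0^\ast\lambda\|_{L^\infty}\le\eps_0r_0$ forces $r_0\geq\eps_0^{-1}\|F_0^\ast\lambda\|_{L^\infty}$, which is fixed by the data. The correct lever is $\Gamma_0$ large. Both $r_0/(M\Gamma_0)$ and $r_0^{1-\beta}M^{(s+1)\beta-1}\Gamma_0^{2\beta-1}$ tend to $0$ as $\Gamma_0\to\infty$ precisely because $\beta<\frac12$; this is the role of $\mu_\beta>0$ in the paper. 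Meanwhile the ratios of consecutive terms stay below $\frac12$.
\end{itemize}

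The paper avoids all of this boundary bookkeeping with one observation you are missing. Let $\pi(x)=x/|x|$. The $0$-homogeneous extension $f\circ\pi$ satisfies $(f\circ\pi)^\ast\lambda=\pi^\ast f^\ast\lambda=0$. Blend it with $F_0$ by a radial cutoff on a collar of width $\eta$. This gives $F_0^\ast\lambda\equiv0$ on a whole neighbourhood of $\partial\B^n$, at a $C^\beta$-cost of $O(\eta^{1-\beta})$; this is \Cref{la:smooth-horizontal-collar-replacement}. After that, all supports stay in the fixed nested sets $\Omega_k\subset\{\dist(x,\partial\B^n)>2\delta\}$. \Cref{pr:severalupdates1} then applies verbatim, with $4\ell_k\le2^{-k-1}\delta$. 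Finally $F\equiv F_0$ on a collar, so the boundary condition and the closed-ball statement are immediate.
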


First we want to assume w.l.o.g. $F_0^\ast \lambda \equiv 0$ around $\partial \B^n$, which we can assume by the usual radial extension
\begin{lemma}
\label{la:smooth-horizontal-collar-replacement}
Let $F_0\in C^\infty(\overline{\B^n};\R^N)$ and let $f:=F_0|_{\partial\B^n}$. Assume
\[
        f^\ast\lambda=0
        \qquad\text{on }\partial\B^n .
\]
Then, for any $0<\beta<1$ and every $\varepsilon>0$, there exists $\widetilde F_0\in C^\infty(\overline{\B^n};\R^N)$ such that
\begin{equation}\label{eq:collar-replacement-same-boundary}
        \widetilde F_0=F_0
        \qquad\text{on }\partial\B^n,
\end{equation}
\begin{equation}\label{eq:collar-replacement-horizontal-collar}
        \widetilde F_0^\ast\lambda=0
        \qquad\text{in a neighborhood of }\partial\B^n,
\end{equation}
and
\begin{equation}\label{eq:collar-replacement-Cbeta-close}
        \|\widetilde F_0-F_0\|_{C^\beta(\overline{\B^n};\R^N)}
        <\varepsilon .
\end{equation}
\end{lemma}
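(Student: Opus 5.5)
The plan is to replace $F_0$ in a thin collar $\{1-\delta<|x|\le 1\}$ by the radially constant extension of $f$, and to glue it to $F_0$ with a cutoff placed further inside. The closeness estimate \eqref{eq:collar-replacement-Cbeta-close} will come from interpolation: in the collar the difference is $O(\delta)$ in $L^\infty$ and $O(1)$ in Lipschitz norm.

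Fix $\delta\in(0,1/4)$ and a smooth cutoff $\theta_\delta:[0,1]\to[0,1]$ with
\[
\theta_\delta(t)=1 \text{ for } t\ge 1-\delta,\qquad \theta_\delta(t)=0 \text{ for } t\le 1-2\delta,\qquad |\theta_\delta'|\aleq \delta^{-1}.
\]
Let $R(x):=F_0(x/|x|)$ for $|x|\ge 1/2$. This is smooth there and satisfies $R^\ast\lambda=\pi^\ast(f^\ast\lambda)=0$, where $\pi(x)=x/|x|$, because pullback commutes with composition and $f^\ast\lambda=0$ on $\partial\B^n$. Set
\[
\widetilde F_0(x):=\theta_\delta(|x|)\,R(x)+\bigl(1-\theta_\delta(|x|)\bigr)F_0(x).
\]
It is smooth on $\overline{\B^n}$ and equals $F_0$ on $\{|x|\le 1-2\delta\}$. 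It equals $R$ on $\{|x|\ge 1-\delta\}$, so $\widetilde F_0^\ast\lambda=R^\ast\lambda=0$ in the open neighborhood $\{1-\delta<|x|\le 1\}$ of $\partial\B^n$. On $\partial\B^n$ we have $R=F_0$, which gives \eqref{eq:collar-replacement-same-boundary} and \eqref{eq:collar-replacement-horizontal-collar}.

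For the estimate, write $\widetilde F_0-F_0=\theta_\delta(|x|)\,(R-F_0)$, which is supported in $\{1-2\delta\le|x|\le1\}$. There, $|R(x)-F_0(x)|\le \|DF_0\|_{L^\infty}\,\bigl|x/|x|-x\bigr|\aleq \delta$, so $\|\widetilde F_0-F_0\|_{L^\infty}\aleq \delta$ with the constant depending on $F_0$. For the derivative,
\[
|D(\theta_\delta(R-F_0))|\le |\theta_\delta'|\,|R-F_0|+|DR|+|DF_0|\aleq \delta^{-1}\delta+1\aleq 1,
\]
uniformly in $\delta$. The $\delta^{-1}$ from the cutoff is exactly compensated by the smallness of $R-F_0$ in the collar; this balance is the only point that needs attention.

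The interpolation inequality $[g]_{C^\beta}\le 2\|g\|_{L^\infty}^{1-\beta}\|Dg\|_{L^\infty}^{\beta}$ on the convex set $\overline{\B^n}$ then gives
\[
\|\widetilde F_0-F_0\|_{C^\beta}\aleq \delta+\delta^{1-\beta}.
\]
This is less than $\varepsilon$ once $\delta$ is small, since $\beta<1$. I expect no real obstacle: the main step is the uniform Lipschitz bound just described, and the rest is bookkeeping.
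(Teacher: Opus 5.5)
Your argument is correct and is essentially the ``usual radial extension'' that the paper invokes for this lemma without writing out the details. Gluing $F_0(x/|x|)$ into a collar of width $\delta$ works because the cutoff's $\delta^{-1}$ is absorbed by $|F_0(x/|x|)-F_0(x)|\le \|DF_0\|_{L^\infty}(1-|x|)\aleq\delta$ on its support, so interpolation yields $\|\widetilde F_0-F_0\|_{C^\beta}\aleq \delta^{1-\beta}$.
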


\begin{proof}[Proof of \Cref{th:pfaffextension}]
Recall
\[
        s=\left\lceil\frac{n+1}{d}\right\rceil .
\]
Choose $\sigma$ such that
\begin{equation}\label{eq:thpfaffext:sigma-choice}
        \frac{2\alpha-1}{1-\alpha}
        <
        \sigma
        <
        \frac1s .
\end{equation}
This is possible by \eqref{eq:alphasdoiuvc}, since
\[
        \frac{2\alpha-1}{1-\alpha}<\frac1s
        \qquad\Longleftrightarrow\qquad
        \alpha<\frac{s+1}{2s+1}.
\]
Choose a small $\gamma>0$ such that
\begin{equation}\label{eq:thpfaffext:gamma-choice}
        \tau_\alpha
        :=
        -\brac{2\alpha-1-\sigma(1-\alpha)
        +
        \gamma\bigl((s+1)\alpha-1\bigr)}
        >0 .
\end{equation}
and, since $\beta < 1/2$, we can also ensure
\begin{equation}\label{eq:thpfaffext:gamma-choice-beta}
        \tau_\beta
        :=
        \sigma(1-\beta)-(2\beta-1)
        -
        \gamma\bigl((s+1)\beta-1\bigr)
        >0 ,
\end{equation}
and
\begin{equation}\label{eq:thpfaffext:mu-beta-choice}
        \mu_\beta
        :=
        1-2\beta
        -
        \gamma\bigl((s+1)\beta-1\bigr) = \tau_\beta-\sigma(1-\beta)
        >0 .
\end{equation}

Let $\eps_0$ be the smallness constant in
\Cref{pr:severalupdates1}. Let $C_{\mathrm{st}}\ge1$ be a
constant for which all estimates in
\eqref{eq:severalupdates1:outputGastlambda},
\eqref{eq:severalupdates1:outputGmF},
\eqref{eq:severalupdates1:outputDGmF},
\eqref{eq:severalupdates1:outputDG2nNsGanna},
\eqref{eq:severalupdates1:outputD2G2nNsGanna}, and
\eqref{eq:severalupdates1:outputDGastlambdasad33e} hold. Set
\begin{equation}\label{eq:thpfaffextv2:C0-choice}
        C_0
        :=
        \max\left\{
        2,\,
        \frac{C_{\mathrm{st}}}{\eps_0},\,
        C_{\mathrm{st}},\,
        C_{\mathrm{st}}^{1/2}
        \right\}.
\end{equation}

We may assume $F_0^\ast \lambda \not \equiv 0$, otherwise there is nothing to show.

Set
\begin{equation}\label{eq:thpfaffextv2:delta-choice}
        \delta
        :=
        \frac18
        \min\left\{
        1,\,
        \dist(\operatorname{supp}F_0^\ast\lambda,\partial \B^n)
        \right\}.
\end{equation}

By \Cref{la:smooth-horizontal-collar-replacement} we may assume $\delta > 0$.

Then
\begin{equation}\label{eq:thpfaffextv2:initial-support-away}
        \operatorname{supp}F_0^\ast\lambda
        \subset
        \{x\in \B^n:\dist(x,\partial \B^n)>4\delta\}.
\end{equation}

For $k=0,1,2,\ldots$, define
\begin{equation}\label{eq:thpfaffextv2:Omega-k}
        \Omega_k
        =
        \{x\in \B^n:\dist(x,\partial \B^n)>2\delta+2^{-k}\delta\}.
\end{equation}
In particular $F_0^\ast \lambda \equiv 0$ for all $x \not \in \Omega_k$, for all $k$.
Then
\begin{equation}\label{eq:thpfaffextv2:Omega-nesting}
        \overline{\Omega_k}\subset\Omega_{k+1}
        \qquad\text{for every }k\ge0,
\end{equation}
\begin{equation}\label{eq:thpfaffextv2:Omega-union}
        \bigcup_{k=0}^\infty\Omega_k
        \subset
        \{x\in \B^n:\dist(x,\partial \B^n)>2\delta\},
\end{equation}
and
\begin{equation}\label{eq:thpfaffextv2:gap-lower-bound}
        \dist(\Omega_k,\B^n\setminus\Omega_{k+1})
        \ge
        2^{-k-1}\delta
        \qquad\text{for every }k\ge0.
\end{equation}

Set
\begin{equation}\label{eq:thpfaffextv2:Upsilon-star}
        \Upsilon_\ast
        :=
        1+
        \max_{1\leq m\leq d}
        \|F_0^{2m}\|_{L^\infty(\B^n)} .
\end{equation}
Set
\begin{equation}\label{eq:thpfaffextv2:r0-choice}
        r_0
        :=
        \max\left\{
        1,\,
        \eps_0^{-1}
        \|F_0^\ast\lambda\|_{L^\infty(\B^n)}
        \right\}.
\end{equation}

Let $\Sigma_\beta \ge1$ be a constant, depending only on
$n,d,N,\beta,C_{\mathrm{st}}$, large enough for the $C^\beta$ interpolation
estimate in \eqref{eq:thpfaffext:Cbeta-increment} below.

Set
\begin{equation}\label{eq:thpfaffextv2:Gamma0-choice}
\begin{split}
        \Gamma_0
        :=
        \max\Bigg\{&
        1,\,
        \max_{1\leq m\leq d}
        \|D(F_0^{2m},F_0^{2m+1})\|_{L^\infty(\B^n)},\\
        &\left(
        r_0
        \max_{1\leq m\leq d}
        \|D^2(F_0^{2m},F_0^{2m+1})\|_{L^\infty(\B^n)}
        \right)^{1/3},\\
        &\|D(F_0^\ast\lambda)\|_{L^\infty(\B^n)}^{1/2},\,
        r_0^{1/(1+\gamma)},\,
        (2C_0)^{1/\gamma},\\
        &\brac{2 C_0^{-(1+\gamma)} }^{\frac{1}{\gamma{1+s(\gamma+2)}}},\brac{2 C_0^{-\gamma}}^{\frac{1}{\gamma(1+s(\gamma+1))}}\\
        &C_0^{(1+\sigma)/(\gamma(1-s\sigma))},\,
        \left(\frac{8r_0}{\delta}\right)^{1/(\gamma+2)},\\
        &2^{1/(\gamma(1+s(\gamma+2)))},\,
        2^{1/(\gamma(1+s(\gamma+1)))},\,
        2^{1/(\tau_\alpha s\gamma)},\\
        &\bigl(2C_{\mathrm{st}}(1+\Upsilon_\ast)r_0\bigr)^{1/(\gamma+1)}\\,
        &2^{1/(\tau_\beta s\gamma)},
        \left(
        \frac{4\Sigma_\beta(1+\Upsilon_\ast)r_0}{\eps}
        \right)^{1/(1+\gamma)},
        \left(
        \frac{4\Sigma_\beta(1+\Upsilon_\ast)r_0^{1-\beta}}{\eps}
        \right)^{1/\mu_\beta},2^{1/(\tau_\alpha s\gamma)},\, 2^{1/\tau_\alpha}
        \Bigg\}.
\end{split}
\end{equation}

For $k=0,1,2,\ldots$, define recursively
\begin{equation}\label{eq:thpfaffextv2:parameters}
\begin{split}
        M_k&:=\Gamma_k^\gamma \xrightarrow{k \to \infty} \infty\\
        r_{k+1}&:=C_0\frac{r_k}{M_k} \xrightarrow{k \to \infty} 0,\\
        \Gamma_{k+1}&:=C_0M_k^s\Gamma_k
        =
        C_0\Gamma_k^{1+s\gamma} \xrightarrow{k \to \infty} \infty.
\end{split}
\end{equation}
Also set
\begin{equation}\label{eq:thpfaffextv2:Lambda-ell}
\begin{split}
        \Lambda_k&:=\frac{\Gamma_k^3}{r_k},\\
        \ell_k&:=\frac{r_k}{M_k\Gamma_k^2} \equiv \frac{r_k}{\Gamma_k^{2+\gamma}}.
\end{split}
\end{equation}

By \eqref{eq:thpfaffextv2:Gamma0-choice}, we have
\begin{equation}\label{eq:thpfaffextv2:Gamma0-first}
        \max_{1\leq m\leq d}
        \|D(F_0^{2m},F_0^{2m+1})\|_{L^\infty(\B^n)}
        \le
        \Gamma_0,
\end{equation}
\begin{equation}\label{eq:thpfaffextv2:Gamma0-second}
        \max_{1\leq m\leq d}
        \|D^2(F_0^{2m},F_0^{2m+1})\|_{L^\infty(\B^n)}
        \le
        \frac{\Gamma_0^3}{r_0}
        =
        \Lambda_0,
\end{equation}
\begin{equation}\label{eq:thpfaffextv2:Gamma0DFastlambda}
        \|D(F_0^\ast\lambda)\|_{L^\infty(\B^n)}
        \le
        \Gamma_0^2,
\end{equation}
\begin{equation}\label{eq:thpfaffextv2:Gamma0scaling}
        r_0\le\Gamma_0^{1+\gamma}=M_0\Gamma_0,
\end{equation}
\begin{equation}\label{eq:thpfaffextv2:M0-large}
        M_0=\Gamma_0^\gamma\ge2C_0,
\end{equation}
and
\begin{equation}\label{eq:thpfaffextv2:scale-relation}
        C_0^{1+\sigma}\Gamma_0^{-\gamma(1-s\sigma)}
        \leq 1.
\end{equation}

Since $C_0\ge1$, $\Gamma_0\ge1$, and
$\Gamma_{k+1}=C_0\Gamma_k^{1+s\gamma}$, we have $\Gamma_k\ge\Gamma_0$ for every
$k\ge0$. Moreover,

\begin{equation}\label{eq:thpfaffextv2:ell-ratio}
\begin{split}
        \frac{\ell_{k+1}}{\ell_k}
        &=
        \frac{r_{k+1}}{M_{k+1}\Gamma_{k+1}^2}
        \frac{M_k\Gamma_k^2}{r_k}\\
        &=C_0
        \frac{1}{\brac{C_0 \Gamma_k^{1+s\gamma}}^{2+\gamma}}
         \Gamma_k^2\\
        &=
        C_0^{-(1+\gamma)}
        \Gamma_k^{-\gamma(1+s(\gamma+2))}\\
        &\leq
        C_0^{-(1+\gamma)}
        \Gamma_0^{-\gamma(1+s(\gamma+2))}\\
        &\le
        \frac12.
\end{split}
\end{equation}

Also,
\begin{equation}\label{eq:thpfaffextv2:ell0-gap}
        4\ell_0
        =
        4\frac{r_0}{\Gamma_0^{\gamma+2}}
        \le
        \frac{\delta}{2}.
\end{equation}
Therefore \eqref{eq:thpfaffextv2:ell-ratio} and
\eqref{eq:thpfaffextv2:ell0-gap} give
\begin{equation}\label{eq:thpfaffextv2:gap-condition}
        4\ell_k
        \le
        2^{-k-1}\delta
        \le
        \dist(\Omega_k,\B^n\setminus\Omega_{k+1})
        \qquad\text{for every }k\ge0.
\end{equation}

Furthermore, as in \eqref{eq:thpfaffextv2:ell-ratio}
\begin{equation}\label{eq:thpfaffextv2:C0-summability-ratio}
\begin{split}
        \frac{r_{k+1}}{M_{k+1}\Gamma_{k+1}}
        \frac{M_k\Gamma_k}{r_k}=&
        \frac{r_{k+1}}{M_{k+1}\Gamma_{k+1}^2}
        \frac{M_k\Gamma_k^2}{r_k} \frac{\Gamma_{k+1}}{\Gamma_k}
        \\
        =&C_0^{-(1+\gamma)}
        \Gamma_k^{-\gamma(1+s(\gamma+2))} \frac{C_0 \Gamma_k^{1+s\gamma}}{\Gamma_k}\\
        =&C_0^{-\gamma}
        \Gamma_k^{-\gamma(1+s(\gamma+1))}\leq C_0^{-\gamma}
        \Gamma_0^{-\gamma(1+s(\gamma+1))}\\
        &\le
        \frac12.
\end{split}
\end{equation}

Since
\begin{equation}\label{eq:thpfaffextv2:C0-summability-first-term}
        \frac{r_0}{M_0\Gamma_0}
        =
        \frac{r_0}{\Gamma_0^{\gamma+1}}
        \le
        \frac12,
\end{equation}
we obtain
\[
        \sum_{k=0}^\infty
        \frac{r_k}{M_k\Gamma_k}
        \le
        \sum_{k=0}^\infty
        2^{-k}
        \frac{r_0}{M_0\Gamma_0} =2\frac{r_0}{\Gamma_0^{1+\gamma}}
        \leq 1.
\]
and thus by \eqref{eq:thpfaffextv2:Gamma0-choice}
\begin{equation}\label{eq:thpfaffextv2:C0-summability}
\begin{split}
        \sum_{k=0}^\infty
        \frac{r_k}{M_k\Gamma_k}
        &\le
        2\frac{r_0}{\Gamma_0^{1+\gamma}},\\
        C_{\mathrm{st}}(1+\Upsilon_\ast)
        \sum_{k=0}^\infty
        \frac{r_k}{M_k\Gamma_k}
        &\le
        2C_{\mathrm{st}}(1+\Upsilon_\ast)
        \frac{r_0}{\Gamma_0^{1+\gamma}}
        \le1.
\end{split}
\end{equation}

We also observe, since $\tau_\alpha>0$ and using \eqref{eq:thpfaffextv2:Gamma0-choice}
\[
        \frac{\Gamma_{k+1}^{-\tau_\alpha}}{\Gamma_k^{{-\tau_\alpha}}}
        \overset{\eqref{eq:thpfaffextv2:parameters}}{=}
        \brac{C_0 \Gamma_k^{1+s\gamma}}^{-\tau_\alpha} \Gamma_k^{\tau_\alpha}
        =
        C_0^{{-\tau_\alpha}}\Gamma_k^{{-\tau_\alpha} s\gamma}
        \le
        \Gamma_0^{{-\tau_\alpha} s\gamma}
        \le
        \frac12.
\]
and thus
\begin{equation}\label{eq:thpfaffext:Calpha-ratiosummable}
 \sum_{k=0}^\infty \Gamma_{k}^{-\tau_\alpha}\leq \sum_{k=0}^\infty 2^{-k} \Gamma_{0}^{-\tau_\alpha} = 2 \Gamma_{0}^{-\tau_\alpha} \leq 1.
\end{equation}

Similarly, since $\tau_\beta > 0$ by \eqref{eq:thpfaffextv2:Gamma0-choice} and
\eqref{eq:thpfaffext:gamma-choice-beta},
\[
        \frac{\Gamma_{k+1}^{-\tau_\beta}}{\Gamma_k^{-\tau_\beta}}
        =
        C_0^{-\tau_\beta}
        \Gamma_k^{-\tau_\beta s\gamma}
        \le
        \Gamma_0^{-\tau_\beta s\gamma}
        \le
        \frac12 .
\]
Hence we record
\begin{equation}\label{eq:thpfaffext:Cbeta-ratiosummable}
        \sum_{k=0}^\infty
        \Gamma_k^{-\tau_\beta}
        \le
        2\Gamma_0^{-\tau_\beta}.
\end{equation}

Now we claim that
\begin{equation}\label{eq:thpfaffextv2:rk-Gammak-relation}
        r_k\leq r_0\Gamma_0^\sigma\Gamma_k^{-\sigma}
        \qquad\text{for every }k\ge0.
\end{equation}
For $k=0$, this is equality. If \eqref{eq:thpfaffextv2:rk-Gammak-relation}
holds at level $k$, then, using \eqref{eq:thpfaffextv2:parameters},

\begin{equation*}
\begin{split}
        r_{k+1}\Gamma_{k+1}^\sigma
        &=
        C_0\frac{r_k}{M_k}
        \left(C_0M_k^s\Gamma_k\right)^\sigma\\
        &=
        C_0\frac{r_k}{\Gamma_k^\gamma}
        \left(C_0\Gamma_k^{\gamma s}\Gamma_k\right)^\sigma\\
        &=
        C_0^{1+\sigma}
        r_k\Gamma_k^\sigma
        \Gamma_k^{-\gamma(1-s\sigma)}\\
        &\overset{\eqref{eq:thpfaffextv2:rk-Gammak-relation}}{\le}
        C_0^{1+\sigma}
        r_0\Gamma_0^\sigma
        \Gamma_k^{-\gamma(1-s\sigma)}\\
        &\leq
        C_0^{1+\sigma}\Gamma_0^{-\gamma(1-s\sigma)}
        r_0\Gamma_0^\sigma
        \\
        &\le
        r_0\Gamma_0^\sigma.
\end{split}
\end{equation*}
The last inequality follows from \eqref{eq:thpfaffextv2:scale-relation}, $s\sigma <1$, and
$\Gamma_k\ge\Gamma_0\geq 1$. Hence \eqref{eq:thpfaffextv2:rk-Gammak-relation} holds for
every $k$. In particular, $r_k\to0$.

Next we claim that
\begin{equation}\label{eq:thpfaffext:rk-scale-one}
        r_k\leq M_k\Gamma_k
        \qquad\text{for every }k\ge0.
\end{equation}

For $k=0$, this is exactly \eqref{eq:thpfaffextv2:Gamma0scaling}. If
\eqref{eq:thpfaffext:rk-scale-one} holds at level $k$, then
\begin{equation*}
\begin{split}
        r_{k+1}
        &=
        C_0\frac{r_k}{M_k}
        \overset{\eqref{eq:thpfaffext:rk-scale-one}}{\le}
        C_0\Gamma_k
        \le
        C_0\Gamma_k^{1+s\gamma}
        =
        \Gamma_{k+1}
        \le
        M_{k+1}\Gamma_{k+1}.
\end{split}
\end{equation*}
Thus \eqref{eq:thpfaffext:rk-scale-one} holds for every $k$. Since
$M_k\Gamma_k\ge1$, we also have
\begin{equation}\label{eq:thpfaffext:rk-scale-two}
        r_k\leq M_k^2\Gamma_k^2
        \qquad\text{for every }k\ge0.
\end{equation}
The following consequences of the choice of $C_0$ will be used at every step:
\begin{equation}\label{eq:thpfaffext:output-compasdeps0askldj}
        C_{\mathrm{st}}\frac{r_k}{M_k}
        \le
        \eps_0 r_{k+1}.
\end{equation}
\begin{equation}\label{eq:thpfaffext:output-first-compatible}
        C_{\mathrm{st}}M_k^s\Gamma_k
        \le
        \Gamma_{k+1}.
\end{equation}
\begin{equation}\label{eq:thpfaffext:output-second-compatible}
        C_{\mathrm{st}}\frac{M_k^{3s}\Gamma_k^3}{r_k}
        \le
        \frac{\Gamma_{k+1}^3}{r_{k+1}}.
\end{equation}
\begin{equation}\label{eq:thpfaffext:output-residual-derivative-compatible}
        C_{\mathrm{st}}M_k^{2s}\Gamma_k^2
        \le
        \Gamma_{k+1}^2.
\end{equation}
Indeed, \eqref{eq:thpfaffext:output-compasdeps0askldj} follows from
$r_{k+1}=C_0r_k/M_k$ and $C_0\ge C_{\mathrm{st}}/\eps_0$ by \eqref{eq:thpfaffextv2:C0-choice}.

Similarly, \eqref{eq:thpfaffext:output-first-compatible} follows from
$\Gamma_{k+1}=C_0M_k^s\Gamma_k$ and $C_0\ge C_{\mathrm{st}}$. For
\eqref{eq:thpfaffext:output-second-compatible}, observe that
\begin{equation*}
\begin{split}
        \frac{\Gamma_{k+1}^3}{r_{k+1}}
        &=
        \frac{C_0^3M_k^{3s}\Gamma_k^3}{C_0r_k/M_k}
        =
        C_0^2M_k
        \frac{M_k^{3s}\Gamma_k^3}{r_k}
        \ge
        C_{\mathrm{st}}
        \frac{M_k^{3s}\Gamma_k^3}{r_k},
\end{split}
\end{equation*}
because $M_k\ge1$ and $C_0^2\ge C_{\mathrm{st}}$. As for
\eqref{eq:thpfaffext:output-residual-derivative-compatible},
\[
        \Gamma_{k+1}^2 = C_0^2M_k^{2s}\Gamma_k^2 \geq C_{\mathrm{st}}M_k^{2s}\Gamma_k^2
\]
since $C_0^2\ge C_{\mathrm{st}}$.
We now construct $F_k$ by induction. The induction hypotheses are the following:
\begin{itemize}
\item We have
\begin{equation}\label{eq:thpfaffext:induction-smooth}
        F_k\in C^\infty(\B^n;\R^N).
\end{equation}
For $k=0$ the following is immediate.
\item
\begin{equation}\label{eq:thpfaffext:induction-support}
        \supp F_k^\ast\lambda\subset\Omega_k.
\end{equation}
For $k=0$ this is \eqref{eq:thpfaffextv2:initial-support-away}.
\item
\begin{equation}\label{eq:thpfaffext:induction-residual}
        \|F_k^\ast\lambda\|_{L^\infty(\B^n)}
        \le
        \eps_0 r_k.
\end{equation}
For $k=0$, these follow from
\eqref{eq:thpfaffextv2:r0-choice},

\item
\begin{equation}\label{eq:thpfaffext:induction-first}
        \max_{1\leq m\leq d}
        \|D(F_k^{2m},F_k^{2m+1})\|_{L^\infty(\B^n)}
        \le
        \Gamma_k.
\end{equation}
For $k=0$ this is \eqref{eq:thpfaffextv2:Gamma0-first}.

\item
\begin{equation}\label{eq:thpfaffext:induction-second}
        \max_{1\leq m\leq d}
        \|D^2(F_k^{2m},F_k^{2m+1})\|_{L^\infty(\B^n)}
        \le
        \frac{\Gamma_k^3}{r_k}
        =
        \Lambda_k.
\end{equation}
For $k=0$ this is \eqref{eq:thpfaffextv2:Gamma0-second}.

\item
\begin{equation}\label{eq:thpfaffext:induction-residual-derivative}
        \|D(F_k^\ast\lambda)\|_{L^\infty(\B^n)}
        \le
        \Gamma_k^2.
\end{equation}
For $k=0$
\eqref{eq:thpfaffextv2:Gamma0DFastlambda}.
\item
\begin{equation}\label{eq:thpfaffext:induction-Upsilon}
        \max_{1\leq m\leq d}
        \|F_k^{2m}\|_{L^\infty(\B^n)}
        \le
        \Upsilon_\ast.
\end{equation}
For $k=0$ this is \eqref{eq:thpfaffextv2:Upsilon-star}.
\item For $k\geq 1$ we have
\begin{equation}\label{eq:thpfaffext:increment-C0}
        \|F_{k}-F_{k-1}\|_{L^\infty(\B^n)}
        \le
        C_{\mathrm{st}}(1+\Upsilon_\ast)
        \frac{r_{k-1}}{M_{k-1}\Gamma_{k-1}}.
\end{equation}
and
\begin{equation}\label{eq:thpfaffext:increment-C1}
        \|D(F_{k}-F_{k-1})\|_{L^\infty(\B^n)}
        \le
        C_{\mathrm{st}}(1+\Upsilon_\ast)M_{k-1}^s\Gamma_{k-1}.
\end{equation}
(There is nothing to show for $k=0$)

%
\end{itemize}

Assume now that $F_k$ has been constructed and satisfies
\eqref{eq:thpfaffext:induction-smooth}--\eqref{eq:thpfaffext:increment-C1}.
If $F_k^\ast\lambda\equiv0$, the construction can be stopped and the conclusion
follows with this smooth map. Thus we assume $F_k^\ast\lambda\not\equiv0$ and set
\begin{equation}\label{eq:thpfaffext:Kk-definition}
        K_k:=\supp F_k^\ast\lambda.
\end{equation}

We apply \Cref{pr:severalupdates1} with
\begin{equation*}
\begin{split}
        F=F_k,\quad
        r=r_k,\quad
        M=M_k,\quad
        \Gamma=\Gamma_k,\quad
        \Upsilon=\Upsilon_\ast,\quad
        K=K_k.
\end{split}
\end{equation*}

We check the assumptions of \Cref{pr:severalupdates1} item by
item.

\begin{itemize}
\item The smoothness assumption on $F$ holds by
\eqref{eq:thpfaffext:induction-smooth}.

\item The compact support assumption
$\supp F^\ast\lambda\Subset \B^n$ holds because, by
\eqref{eq:thpfaffext:induction-support},
\[
        K_k=\supp F_k^\ast\lambda\subset\Omega_k\Subset \B^n.
\]

\item The definition \eqref{eq:severalupdates1:elldef} becomes exactly
\[
        \ell=\frac{r_k}{M_k\Gamma_k^2}=\ell_k,
\]
which is \eqref{eq:thpfaffextv2:Lambda-ell}.

\item The first two scale assumptions in
\eqref{eq:severalupdates1:rleqMGammadef} are exactly
\eqref{eq:thpfaffext:rk-scale-one} and
\eqref{eq:thpfaffext:rk-scale-two}:
\[
        r_k\leq M_k\Gamma_k,
        \qquad
        r_k\leq M_k^2\Gamma_k^2.
\]

\item For the third assumption in
\eqref{eq:severalupdates1:rleqMGammadef}, we use
$K_k\subset\Omega_k$, \eqref{eq:thpfaffext:induction-support}. Hence by \eqref{eq:thpfaffextv2:Omega-k},
\[
        \dist(K_k,\partial \B^n)
        >
        2\delta+2^{-k}\delta>2^{-k-1}\delta\overset{\eqref{eq:thpfaffextv2:gap-condition}}{\geq} 4\ell_k
\]
This establishes \eqref{eq:severalupdates1:rleqMGammadef}.

\item The residual smallness assumption \eqref{eq:severalupdates1:Fasttlambdas0fdi2}
is precisely \eqref{eq:thpfaffext:induction-residual}:
\[
        \|F_k^\ast\lambda\|_{L^\infty(\B^n)}
        \le
        \eps_0 r_k.
\]

\item The first derivative assumption
\eqref{eq:severalupdates1:DF2mLinftyest} is precisely
\eqref{eq:thpfaffext:induction-first}:
\[
        \max_{1\leq m\leq d}
        \|D(F_k^{2m},F_k^{2m+1})\|_{L^\infty(\B^n)}
        \le
        \Gamma_k.
\]

\item The second derivative assumption
\eqref{eq:severalupdates1:D2F2mLinftyestD2ver} is precisely
\eqref{eq:thpfaffext:induction-second}:
\[
        \max_{1\leq m\leq d}
        \|D^2(F_k^{2m},F_k^{2m+1})\|_{L^\infty(\B^n)}
        \le
        \frac{\Gamma_k^3}{r_k}.
\]
\item The residual derivative assumption
\eqref{eq:severalupdates1:Dfastlambda} is precisely
\eqref{eq:thpfaffext:induction-residual-derivative}:
\[
        \|D(F_k^\ast\lambda)\|_{L^\infty(\B^n)}
        \le
        \Gamma_k^2.
\]

\item The $\Upsilon$ assumption \eqref{eq:severalupdates1:F2mlinftyupsilonasd} is precisely
\eqref{eq:thpfaffext:induction-Upsilon}:
\[
        \max_{1\leq m\leq d}
        \|F_k^{2m}\|_{L^\infty(\B^n)}
        \le
        \Upsilon_\ast.
\]
\end{itemize}

Thus all assumptions of \Cref{pr:severalupdates1} are satisfied and we obtain a map $F_{k+1}:=G$ be the map produced by the proposition. We now verify the induction hypotheses at level $k+1$.
\begin{itemize}
\item Since \Cref{pr:severalupdates1} gives $F_{k+1}\in C^\infty(\B^n;\R^N)$, we get
\eqref{eq:thpfaffext:induction-smooth} at level $k+1$.

\item By \eqref{eq:severalupdates1:outputsupp},
\[
        \supp(F_{k+1}-F_k)
        \subset
        \{x\in \B^n:\dist(x,K_k)<2\ell_k\}.
\]
Since $K_k\subset\Omega_k$ and
$2\ell_k<\dist(\Omega_k,\B^n\setminus\Omega_{k+1})$ by
\eqref{eq:thpfaffextv2:gap-condition}, we have
\[
        \supp(F_{k+1}-F_k)\subset\Omega_{k+1}.
\]
If $x\notin\Omega_{k+1}$, then $F_{k+1}=F_k$ near $x$, and also
$x\notin\Omega_k$. By \eqref{eq:thpfaffext:induction-support},
$F_k^\ast\lambda=0$ near $x$. Hence $F_{k+1}^\ast\lambda=0$ near $x$, and so
\[
        \supp F_{k+1}^\ast\lambda\subset\Omega_{k+1}.
\]
This proves \eqref{eq:thpfaffext:induction-support} at level $k+1$.

\item By \eqref{eq:severalupdates1:outputGastlambda} and
\eqref{eq:thpfaffext:output-compasdeps0askldj},
\[
        \|F_{k+1}^\ast\lambda\|_{L^\infty(\B^n)}
        \le
        C_{\mathrm{st}}\frac{r_k}{M_k}
        \le
        \eps_0 r_{k+1}.
\]
This proves \eqref{eq:thpfaffext:induction-residual} at level $k+1$.

\item By \eqref{eq:severalupdates1:outputDG2nNsGanna} and
\eqref{eq:thpfaffext:output-first-compatible},
\[
        \max_{1\leq m\leq d}
        \|D(F_{k+1}^{2m},F_{k+1}^{2m+1})\|_{L^\infty(\B^n)}
        \le
        C_{\mathrm{st}}M_k^s\Gamma_k
        \le
        \Gamma_{k+1}.
\]
This proves \eqref{eq:thpfaffext:induction-first} at level $k+1$.

\item By \eqref{eq:severalupdates1:outputD2G2nNsGanna} and
\eqref{eq:thpfaffext:output-second-compatible},
\[
        \max_{1\leq m\leq d}
        \|D^2(F_{k+1}^{2m},F_{k+1}^{2m+1})\|_{L^\infty(\B^n)}
        \le
        C_{\mathrm{st}}\frac{M_k^{3s}\Gamma_k^3}{r_k}
        \le
        \frac{\Gamma_{k+1}^3}{r_{k+1}}.
\]
This proves \eqref{eq:thpfaffext:induction-second} at level $k+1$.
\item By \eqref{eq:severalupdates1:outputDGastlambdasad33e} and
\eqref{eq:thpfaffext:output-residual-derivative-compatible},
\[
        \|D(F_{k+1}^\ast\lambda)\|_{L^\infty(\B^n)}
        \le
        C_{\mathrm{st}}M_k^{2s}\Gamma_k^2
        \le
        \Gamma_{k+1}^2.
\]
This proves \eqref{eq:thpfaffext:induction-residual-derivative} at level $k+1$.

\item By \eqref{eq:severalupdates1:outputGmF},
\[      \|F_{k+1}-F_k\|_{L^\infty(\B^n)}
        \le
        C_{\mathrm{st}}(1+\Upsilon_\ast)
        \frac{r_k}{M_k\Gamma_k}.
\]
which is \eqref{eq:thpfaffext:increment-C0} for $k+1$.

Also, by \eqref{eq:severalupdates1:outputDGmF}, for every $k\ge0$,

\[        \|D(F_{k+1}-F_k)\|_{L^\infty(\B^n)}
        \le
        C_{\mathrm{st}}(1+\Upsilon_\ast)M_k^s\Gamma_k.
\]
which is \eqref{eq:thpfaffext:increment-C1}

\item By the already established \eqref{eq:thpfaffext:increment-C0}, using \eqref{eq:thpfaffextv2:C0-summability},
\begin{equation*}
\begin{split}
        \max_{1\leq m\leq d}
        \|F_{k+1}^{2m}\|_{L^\infty(\B^n)}
        &\le
        \max_{1\leq m\leq d}
        \|F_0^{2m}\|_{L^\infty(\B^n)}
        +
        \sum_{i=0}^k
        \|F_{i+1}-F_i\|_{L^\infty(\B^n)}
        \\
        &\le
        \max_{1\leq m\leq d}
        \|F_0^{2m}\|_{L^\infty(\B^n)}
        +
        1
        \overset{\eqref{eq:thpfaffextv2:Upsilon-star}}{=}
        \Upsilon_\ast.
\end{split}
\end{equation*}
This proves \eqref{eq:thpfaffext:induction-Upsilon} at level $k+1$.
%
\end{itemize}

The induction is complete, and we have found a sequence $F_k \in C^\infty(\B^n)$ as claimed.

By interpolation, for any $\alpha \in (0,1]$
\begin{equation*}
\begin{split}
        \|F_{k+1}-F_k\|_{C^\alpha(\B^n)}
        \aleq&
        \|F_{k+1}-F_k\|_{L^\infty(\B^n)}
        +
        \|F_{k+1}-F_k\|_{L^\infty(\B^n)}^{1-\alpha}
        \|D(F_{k+1}-F_k)\|_{L^\infty(\B^n)}^\alpha\\
\overset{\eqref{eq:thpfaffext:increment-C0},\eqref{eq:thpfaffext:increment-C1}}{\aleq_{\Upsilon^\ast,C_{\mathrm{st}}}}&
\frac{r_k}{M_k\Gamma_k} + \brac{\frac{r_k}{M_k\Gamma_k}}^{1-\alpha} \brac{M_{k}^s\Gamma_{k}}^\alpha\\
=&\frac{r_k}{M_k\Gamma_k}
        +
        r_k^{1-\alpha}
        M_k^{(s+1)\alpha-1}
        \Gamma_k^{2\alpha-1}\\
        \overset{\eqref{eq:thpfaffextv2:rk-Gammak-relation}}{\leq}&
\frac{r_k}{M_k\Gamma_k}
        +
        \brac{r_0\Gamma_0^\sigma\Gamma_k^{-\sigma}}^{1-\alpha}
        \Gamma_k^{\gamma\brac{(s+1)\alpha-1}}
        \Gamma_k^{2\alpha-1}\\
       =& \frac{r_k}{M_k\Gamma_k}
        +
        (r_0\Gamma_0^\sigma)^{1-\alpha}
        \Gamma_k^{
        2\alpha-1-\sigma(1-\alpha)
        +
        \gamma((s+1)\alpha-1)
        }
        \\
        \overset{\eqref{eq:thpfaffext:gamma-choice}}{=}&
        \frac{r_k}{M_k\Gamma_k}
        +
        (r_0\Gamma_0^\sigma)^{1-\alpha}
        \Gamma_k^{{-\tau_\alpha}}.
        \end{split}
\end{equation*}
Observe that we can ensure $\tau_\alpha > 0$ in \eqref{eq:thpfaffext:gamma-choice} only since $\alpha$ satisfies \eqref{eq:alphasdoiuvc}.

By \eqref{eq:thpfaffextv2:C0-summability} and \eqref{eq:thpfaffext:Calpha-ratiosummable} for this $\alpha$ we have

\begin{equation}\label{eq:thpfaffext:Calpha-summability}
        \sum_{k=0}^\infty
        \|F_{k+1}-F_k\|_{C^\alpha(\B^n)}
        <\infty.
\end{equation}

We now record the same estimate with exponent $\beta$. By interpolation and
\eqref{eq:thpfaffext:increment-C0}--\eqref{eq:thpfaffext:increment-C1},
\begin{equation}\label{eq:thpfaffext:Cbeta-increment}
\begin{split}
        \|F_{k+1}-F_k\|_{C^\beta(\B^n)}
        \le
        \Sigma_\beta(1+\Upsilon_\ast)
        \left(
        \frac{r_k}{M_k\Gamma_k}
        +
        r_k^{1-\beta}
        M_k^{(s+1)\beta-1}
        \Gamma_k^{2\beta-1}
        \right).
\end{split}
\end{equation}
Using \eqref{eq:thpfaffextv2:rk-Gammak-relation} and $M_k=\Gamma_k^\gamma$,
the second term satisfies
\begin{equation}\label{eq:thpfaffext:Cbeta-second-term}
\begin{split}
        r_k^{1-\beta}
        M_k^{(s+1)\beta-1}
        \Gamma_k^{2\beta-1}
        &\le
        (r_0\Gamma_0^\sigma)^{1-\beta}
        \Gamma_k^{2\beta-1-\sigma(1-\beta)
        +\gamma((s+1)\beta-1)}
        \\
        &=
        (r_0\Gamma_0^\sigma)^{1-\beta}
        \Gamma_k^{-\tau_\beta}.
\end{split}
\end{equation}
Therefore, by
\eqref{eq:thpfaffextv2:C0-summability},
\eqref{eq:thpfaffext:Cbeta-ratiosummable}, and
\eqref{eq:thpfaffext:mu-beta-choice},
\begin{equation}\label{eq:thpfaffext:Cbeta-summability-small}
\begin{split}
        \sum_{k=0}^\infty
        \|F_{k+1}-F_k\|_{C^\beta(\B^n)}
        &\le
        \Sigma_\beta(1+\Upsilon_\ast)
        \left(
        2\frac{r_0}{\Gamma_0^{1+\gamma}}
        +
        2(r_0\Gamma_0^\sigma)^{1-\beta}\Gamma_0^{-\tau_\beta}
        \right)
        \\
        &=
        \Sigma_\beta(1+\Upsilon_\ast)
        \left(
        2\frac{r_0}{\Gamma_0^{1+\gamma}}
        +
        2r_0^{1-\beta}\Gamma_0^{-\mu_\beta}
        \right)
        \\
        &\overset{\mu_\beta >0}{<}
        \eps .
\end{split}
\end{equation}
Here the last inequality follows from the last two $\eps$-dependent conditions
in the definition of $\Gamma_0$, \eqref{eq:thpfaffextv2:Gamma0-choice}. And observe that $\mu_\beta > 0$ (whereas $\mu_\alpha < 0$, and the same inequality would fail for $\alpha$).

By \eqref{eq:thpfaffext:Calpha-summability} $F_k$ converges in $C^\alpha(\B^n;\R^N)$ to some $F\in C^\alpha(\B^n;\R^N)$. On the other hand by \eqref{eq:thpfaffext:induction-residual} and $r_k\to0$, and thus $\|F_k^\ast\lambda\|_{L^\infty(\B^n)}
        \to0$

Since $\alpha > \frac{1}{2}$ we conclude by \Cref{la:distrpullbackvanishes} that $F^\ast \lambda = 0$ in distributional sense.

Moreover since $\supp \brac{F_{k+1}-F_k} \subset \Omega_{k+1}$, we have $F(x)=F_0(x)$ whenever
$\dist(x,\partial \B^n)\le2\delta$. In particular, \eqref{eq:pfaffext:horizontalboundaryFF0} holds.

Since $F_k\to F$ in $C^\alpha(\B^n;\R^N)$, and hence in
$C^\beta(\B^n;\R^N)$, \eqref{eq:thpfaffext:Cbeta-summability-small} gives \eqref{eq:FF0closeby}
\end{proof}

\section{Making maps horizontal: Proof of Theorem~\ref{th:closebymaps}}

\label{sec:homeomorphic-graph-pfaff-nash}

\Cref{th:closebymaps} is a consequence of \Cref{th:pfaffconstantrank} and the following

\begin{theorem}
\label{th:homeopfaffapproxaslkd}
Let $N\ge 2d+1$,$d\ge1$,$n\ge2$, and recall $\lambda \in C^\infty(\Ep^1 \R^N)$ from \eqref{eq:pfaffcr}.
Assume that $b \in \{1,\ldots,d\}$ and set $s:=\left\lceil\frac{n+1}{b}\right\rceil$.

Let $F_0\in C^\infty(\overline{\B^n};\R^N)$.
Then for any $\Omega \subset \B^n$, any
\begin{equation}\label{eq:homeopfaff-alpha-range}
        \frac12<\alpha<\frac{s+1}{2s+1}.
\end{equation}
there exists $F\in C^\alpha(\B^n;\R^N)$ such that
\begin{equation}\label{eq:homeopfaff-output-horizontal}
        F^\ast\lambda=0 \quad \text{in $\mathcal{D}'(\Omega)$},
\end{equation}
and
\begin{equation}\label{eq:homeopfaff-unchangedcoordinates}
        F^{\ell} = F_0^\ell \quad \forall \ell \in \{2b+2,\ldots N\}
\end{equation}

Moreover, for every $0<\beta<1/2$ and every $\varepsilon>0$, the map
$F$ may be chosen so that
\begin{equation}\label{eq:homeopfaff-Cbeta-close}
        \|F-F_0\|_{C^\beta(\B^n)}<\varepsilon .
\end{equation}
\end{theorem}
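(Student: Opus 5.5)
We run the same iteration as in the proof of \Cref{th:pfaffextension}, but with \Cref{pr:keepcoefficientsargnobd} (for the given $b$) in place of \Cref{pr:severalupdates1}. Since no boundary values need to be preserved, there is no support bookkeeping, no collar lemma and no sets $\Omega_k$. The only issue is that \Cref{pr:keepcoefficientsargnobd} controls the residual $F^\ast\lambda$ on all of $\B^n$ and uses no cutoff. Because of this we obtain $F^\ast\lambda=0$ distributionally on all of $\B^n$, and hence on any $\Omega$.

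Concretely, choose $\sigma$ with $\frac{2\alpha-1}{1-\alpha}<\sigma<\frac1s$; this is possible exactly by \eqref{eq:homeopfaff-alpha-range}. Then choose $\gamma>0$ small so that $\tau_\alpha,\tau_\beta,\mu_\beta>0$ as in \eqref{eq:thpfaffext:gamma-choice}--\eqref{eq:thpfaffext:mu-beta-choice}. Set
\[
r_0:=\max\{1,\eps_{0,b}^{-1}\|F_0^\ast\lambda\|_{L^\infty}\},\qquad \Upsilon_\ast:=1+\max_{m\le b}\|F_0^{2m}\|_{L^\infty},
\]
and take $\Gamma_0$ large as in \eqref{eq:thpfaffextv2:Gamma0-choice}, omitting the $\delta$-condition and restricting the derivative conditions to $m\le b$. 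Take $C_0$ as in \eqref{eq:thpfaffextv2:C0-choice} with $C_{\mathrm{st},b},\eps_{0,b}$. Define $M_k=\Gamma_k^\gamma$, $r_{k+1}=C_0r_k/M_k$ and $\Gamma_{k+1}=C_0M_k^s\Gamma_k$.

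The induction hypotheses are \eqref{eq:thpfaffext:induction-residual}--\eqref{eq:thpfaffext:increment-C1}, with all maxima taken over $m\le b$ only, together with $F_k^\ell=F_0^\ell$ for $\ell\ge 2b+2$. The hypotheses of \Cref{pr:keepcoefficientsargnobd} involve only the blocks $m\le b$ and the full residual, so the verification is verbatim the one in the proof of \Cref{th:pfaffextension}. The scale relations $r_k\le M_k\Gamma_k$ and $r_k\le M_k^2\Gamma_k^2$, the bound \eqref{eq:thpfaffextv2:rk-Gammak-relation}, and the compatibility inequalities \eqref{eq:thpfaffext:output-compasdeps0askldj}--\eqref{eq:thpfaffext:output-residual-derivative-compatible} are all purely numerical. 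The frozen-coordinate property \eqref{eq:vs2propassaslkdj-output-frozen} propagates trivially and yields \eqref{eq:homeopfaff-unchangedcoordinates} in the limit.

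The interpolation estimate from the earlier proof then gives $\sum_k\|F_{k+1}-F_k\|_{C^\alpha}<\infty$ (using $\tau_\alpha>0$) and $\sum_k\|F_{k+1}-F_k\|_{C^\beta}<\eps$ (using $\mu_\beta>0$ and $\Gamma_0$ large). Hence $F_k\to F$ in $C^\alpha(\B^n)$, and \eqref{eq:homeopfaff-Cbeta-close} holds. Since $\|F_k^\ast\lambda\|_{L^\infty(\B^n)}\le\eps_{0,b}r_k\to0$, \Cref{la:distrpullbackvanishes} gives $F^\ast\lambda=0$ in $\B^n$, in particular in $\mathcal D'(\Omega)$.

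The main point to check, which I expect to be the only real obstacle, is that the mollification in \Cref{la:mollypfaff} works up to the boundary. Without compact support we need the extension of $F_k^\ast\lambda$ beyond $\B^n$, and we need the bounds $\|D^a\omega^\ell\|\aleq r\ell^{-a}$ with constants uniform in $k$. A second requirement is that $F_k$ be smooth on $\overline{\B^n}$, so that the norms of $A_j$ and their derivatives are finite up to the boundary. Both are supplied by \Cref{la:mollypfaff} and by the choice $\theta\equiv1$. If uniformity at the boundary causes trouble, the fallback is to apply the construction on a slightly larger ball containing $\overline\Omega$ and restrict afterwards; this is why the statement only asserts horizontality in $\Omega$.
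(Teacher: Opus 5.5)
Your proposal is correct and is essentially the paper's own proof. It uses the same choices of $\sigma$, $\gamma$, $r_0$, $\Upsilon_\ast$, $\Gamma_0$, $C_0$ and the same recursion, iterates \Cref{pr:keepcoefficientsargnobd} under the induction hypotheses of \Cref{th:pfaffextension} with the support condition dropped and the frozen-coordinate condition added, and concludes by interpolation and \Cref{la:distrpullbackvanishes}. The boundary concern you raise is exactly what the radial extension in \Cref{la:mollypfaff} handles, so, as in the paper, one gets $F^\ast\lambda=0$ on all of $\B^n$ and the fallback to a larger ball is not needed.
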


\begin{proof}[Proof of \Cref{th:homeopfaffapproxaslkd}]

Fix $0<\beta<1/2$ and $\varepsilon>0$. As we did in \eqref{eq:thpfaffext:sigma-choice}, by \eqref{eq:homeopfaff-alpha-range} we can find $\sigma>0$ such that
\begin{equation}\label{eq:homeopfaff-sigma-choice}
        \frac{2\alpha-1}{1-\alpha}
        <
        \sigma
        <
        \frac1s .
\end{equation}

As in \eqref{eq:thpfaffext:gamma-choice}, \eqref{eq:thpfaffext:gamma-choice-beta}, \eqref{eq:thpfaffext:mu-beta-choice} we choose a small $\gamma>0$ sufficiently so that
\begin{equation}\label{eq:homeopfaff-tau-alpha-choice}
        \tau_\alpha
        :=
        \sigma(1-\alpha)-(2\alpha-1)
        -
        \gamma\bigl((s+1)\alpha-1\bigr)
        >0,
\end{equation}
\begin{equation}\label{eq:homeopfaff-tau-beta-choice}
        \tau_\beta
        :=
        \sigma(1-\beta)-(2\beta-1)
        -
        \gamma\bigl((s+1)\beta-1\bigr)
        >0,
\end{equation}
and
\begin{equation}\label{eq:homeopfaff-mu-beta-choice}
        \mu_\beta
        :=
        1-2\beta
        -
        \gamma\bigl((s+1)\beta-1\bigr)
        >0.
\end{equation}
The last condition uses $\beta<1/2$.

Let $\eps_{0,b}>0$ and $C_{\mathrm{st},b}\ge1$ be the constants in
\Cref{pr:keepcoefficientsargnobd}. Let $\Sigma_\beta\ge1$ be a constant
large enough for the $C^\beta$ interpolation estimate \eqref{eq:homeopfaffCbetainc} below. Set
\begin{equation}\label{eq:homeopfaff-C0-choice}
        C_0
        :=
        \max\left\{
        2,\,
        \frac{C_{\mathrm{st},b}}{\eps_{0,b}},\,
        C_{\mathrm{st},b},\,
        C_{\mathrm{st},b}^{1/2}
        \right\}.
\end{equation}

Set (this is the same as in \eqref{eq:thpfaffextv2:Upsilon-star}, just with $b$ instead of $d$)
\begin{equation}\label{eq:homeopfaff-Upsilon-star}
        \Upsilon_\ast
        :=
        1+
        \max_{1\leq m\leq b}
        \|F_0^{2m}\|_{L^\infty(\B^n)}
\end{equation}
and \eqref{eq:thpfaffextv2:r0-choice}
\[
        r_0
        :=
        \max\left\{
        1,\,
        \eps_{0,b}^{-1}
        \|F_0^\ast\lambda\|_{L^\infty(\B^n)}
        \right\}.
\]
Choose $\Gamma_0\ge1$ so large that (cf. \eqref{eq:thpfaffextv2:Gamma0-choice} with $d$ replaced by $b$)

\begin{equation}\label{eq:homeopfaff-Gamma0-initial}
\begin{split}
        \Gamma_0
        :=
        \max\Bigg\{&
        1,\,
        \max_{1\leq m\leq b}
        \|D(F_0^{2m},F_0^{2m+1})\|_{L^\infty(\B^n)},\\
        &\left(
        r_0
        \max_{1\leq m\leq d}
        \|D^2(F_0^{2m},F_0^{2m+1})\|_{L^\infty(\B^n)}
        \right)^{1/3},\\
        &\|D(F_0^\ast\lambda)\|_{L^\infty(\B^n)}^{1/2},\,
        r_0^{1/(1+\gamma)},\,
        (2C_0)^{1/\gamma},\\
        &\brac{2 C_0^{-(1+\gamma)} }^{\frac{1}{\gamma{1+s(\gamma+2)}}},\brac{2 C_0^{-\gamma}}^{\frac{1}{\gamma(1+s(\gamma+1))}}\\
        &C_0^{(1+\sigma)/(\gamma(1-s\sigma))},\,
        \left(\frac{8r_0}{1}\right)^{1/(\gamma+2)},\\
        &2^{1/(\gamma(1+s(\gamma+2)))},\,
        2^{1/(\gamma(1+s(\gamma+1)))},\,
        2^{1/(\tau_\alpha s\gamma)},\\
        &\bigl(2C_{\mathrm{st}}(1+\Upsilon_\ast)r_0\bigr)^{1/(\gamma+1)}\\,
        &2^{1/(\tau_\beta s\gamma)},
        \left(
        \frac{4\Sigma_\beta(1+\Upsilon_\ast)r_0}{\eps}
        \right)^{1/(1+\gamma)},
        \left(
        \frac{4\Sigma_\beta(1+\Upsilon_\ast)r_0^{1-\beta}}{\eps}
        \right)^{1/\mu_\beta},2^{1/(\tau_\alpha s\gamma)}
        \Bigg\}.
\end{split}
\end{equation}
Here, once more, $\Sigma_\beta \ge1$ is a constant, depending only on
$n,d,N,\beta,C_{\mathrm{st}}$, large enough for the $C^\beta$ interpolation
estimate in \eqref{eq:homeopfaffCbetainc} below.

For $k=0,1,2,\ldots$, and our definition of $s$, we define (as in  \eqref{eq:thpfaffextv2:parameters})
\[
        M_k:=\Gamma_k^\gamma,
        \qquad
        r_{k+1}:=C_0\frac{r_k}{M_k},
        \qquad
        \Gamma_{k+1}:=C_0M_k^s\Gamma_k, \qquad \Lambda_k:=\frac{\Gamma_k^3}{r_k}.
\]
We do not need $\ell_k$ since we do not need to respect any boundary data.

The parameter estimates are the same as in the proof of
\Cref{th:pfaffextension}, just with the new value of $s$.

In particular, the computation of \eqref{eq:thpfaffextv2:rk-Gammak-relation} gives
\begin{equation}\label{eq:homeopfaff-rk-Gammak-relation}
        r_k\leq r_0\Gamma_0^\sigma\Gamma_k^{-\sigma}
        \qquad\text{for every }k\ge0.
\end{equation}
The computations of \eqref{eq:thpfaffext:rk-scale-one} and
\eqref{eq:thpfaffext:rk-scale-two} give
\begin{equation}\label{eq:homeopfaff-rk-scale}
        r_k\leq M_k\Gamma_k,
        \qquad
        r_k\leq M_k^2\Gamma_k^2
        \qquad\text{for every }k\ge0.
\end{equation}
The ratio estimate \eqref{eq:thpfaffextv2:C0-summability} gives
\begin{equation}\label{eq:homeopfaff-C0-summability}
        \sum_{k=0}^{\infty}
        \frac{r_k}{M_k\Gamma_k}
        \le
        2\frac{r_0}{M_0\Gamma_0}
        =
        2\frac{r_0}{\Gamma_0^{1+\gamma}}.
\end{equation}
Similarly, using \eqref{eq:homeopfaff-Gamma0-initial}, the computation
of \eqref{eq:thpfaffext:Calpha-ratiosummable},\eqref{eq:thpfaffext:Cbeta-ratiosummable} give
\begin{equation}\label{eq:homeopfaff-alpha-ratio-summability}
        \sum_{k=0}^{\infty}\Gamma_k^{-\tau_\alpha}
        \le
        2\Gamma_0^{-\tau_\alpha},
\end{equation}
and
\begin{equation}\label{eq:homeopfaff-beta-ratio-summability}
        \sum_{k=0}^{\infty}\Gamma_k^{-\tau_\beta}
        \le
        2\Gamma_0^{-\tau_\beta}.
\end{equation}

Arguing as in
\eqref{eq:thpfaffext:output-compasdeps0askldj}--
\eqref{eq:thpfaffext:output-residual-derivative-compatible}, with
$C_{\mathrm{st}}$ replaced by $C_{\mathrm{st},b}$ and $\eps_0$ replaced
by $\eps_{0,b}$ we once more obtain,
\begin{equation}\label{eq:homeopfaff-output-residual-compatible}
        C_{\mathrm{st},b}\frac{r_k}{M_k}
        \le
        \eps_{0,b}r_{k+1},
\end{equation}
\begin{equation}\label{eq:homeopfaff-output-first-compatible}
        C_{\mathrm{st},b}M_k^s\Gamma_k
        \le
        \Gamma_{k+1},
\end{equation}
\begin{equation}\label{eq:homeopfaff-output-second-compatible}
        C_{\mathrm{st},b}
        \frac{M_k^{3s}\Gamma_k^3}{r_k}
        \le
        \frac{\Gamma_{k+1}^3}{r_{k+1}},
\end{equation}
and
\begin{equation}\label{eq:homeopfaff-output-residual-derivative-compatible}
        C_{\mathrm{st},b}M_k^{2s}\Gamma_k^2
        \le
        \Gamma_{k+1}^2.
\end{equation}

We now construct $F_k$ by induction.

The induction hypotheses are the same as in the proof of \Cref{th:pfaffextension}, except that we don't care about \eqref{eq:thpfaffext:induction-support}:

\begin{itemize}
\item The analogue of \eqref{eq:thpfaffext:induction-smooth}, i.e.
\[
        F_k\in C^\infty(\overline{\B^n};\R^N),
\]
\item The analogue of \eqref{eq:thpfaffext:induction-residual}, i.e. \begin{equation}\label{eq:homeopfaff-induction-residual}
        \|F_k^\ast\lambda\|_{L^\infty(\B^n)}
        \le
        \eps_{0,b}r_k,
\end{equation}

\item The analogue of \eqref{eq:thpfaffext:induction-first}, but with $b$ instead of $d$, i.e.
\begin{equation}\label{eq:homeopfaff-induction-first}
        \max_{1\leq m\leq b}
        \|D(F_k^{2m},F_k^{2m+1})\|_{L^\infty(\B^n)}
        \le
        \Gamma_k,
\end{equation}

\item The analogue of \eqref{eq:thpfaffext:induction-second} except with $b$ instead of $d$, \begin{equation}\label{eq:homeopfaff-induction-second}
        \max_{1\leq m\leq b}
        \|D^2(F_k^{2m},F_k^{2m+1})\|_{L^\infty(\B^n)}
        \le
        \frac{\Gamma_k^3}{r_k}=\Lambda_k
\end{equation}

\item The analogue of \eqref{eq:thpfaffext:induction-residual-derivative} \begin{equation}\label{eq:homeopfaff-induction-residual-derivative}
        \|D(F_k^\ast\lambda)\|_{L^\infty(\B^n)}
        \le
        \Gamma_k^2,
\end{equation}

\item The analogue of \eqref{eq:thpfaffext:induction-Upsilon} with $d$ replaced by $b$, \begin{equation}\label{eq:homeopfaff-induction-Upsilon}
        \max_{1\leq m\leq b}
        \|F_k^{2m}\|_{L^\infty(\B^n)}
        \le
        \Upsilon_\ast,
\end{equation}
\item \eqref{eq:thpfaffext:increment-C0} and \eqref{eq:thpfaffext:increment-C1}, i.e.
\begin{equation}\label{eq:homeopfaff-increment-C0}
        \|F_{k}-F_{k-1}\|_{L^\infty(\B^n)}
        \le
        C_{\mathrm{st},b}(1+\Upsilon_\ast)
        \frac{r_{k-1}}{M_{k-1}\Gamma_{k-1}},
\end{equation}
and
\begin{equation}\label{eq:homeopfaff-increment-C1}
        \|D(F_{k}-F_{k-1})\|_{L^\infty(\B^n)}
        \le
        C_{\mathrm{st},b}(1+\Upsilon_\ast)M_{k-1}^s\Gamma_{k-1}.
\end{equation}

\item and as a new condition that did hold in \Cref{th:pfaffextension} for $j=2d+2,\ldots,N$, but was irrelevant for us, but now becomes crucial:
\begin{equation}\label{eq:homeopfaff-induction-frozen}
        F_k^j=F_0^j
        \qquad\text{for every }j=2b+2,\ldots,N.
\end{equation}
\end{itemize}
Just as in the proof of \Cref{th:pfaffextension}, for $k=0$, these properties follow from the initial assumptions,
\eqref{eq:homeopfaff-Upsilon-star},
\eqref{eq:thpfaffextv2:r0-choice},
and \eqref{eq:homeopfaff-Gamma0-initial}.

Assume that $F_k$ has been constructed. Apply
\Cref{pr:keepcoefficientsargnobd} with
\[
        F=F_k,
        \qquad
        r=r_k,
        \qquad
        M=M_k,
        \qquad
        \Gamma=\Gamma_k,
        \qquad
        \Upsilon=\Upsilon_\ast.
\]
and all assumptions are satisfied by the induction hypothesis. We obtain some $G$ and set $F_{k+1}:=G$. By \eqref{eq:homeopfaff-output-residual-compatible}--\eqref{eq:homeopfaff-output-residual-derivative-compatible}, the induction closes just as in \Cref{th:pfaffextension}, the new condition \eqref{eq:homeopfaff-induction-frozen} is just \eqref{eq:vs2propassaslkdj-output-frozen}.

Once more, \eqref{eq:homeopfaff-C0-summability} and
\eqref{eq:homeopfaff-Gamma0-initial}, we have
\[
\begin{split}
        \sum_{i=0}^{\infty}
        \|F_{i+1}-F_i\|_{L^\infty(\B^n)}
        &\le
        C_{\mathrm{st},b}(1+\Upsilon_\ast)
        \sum_{i=0}^{\infty}\frac{r_i}{M_i\Gamma_i}
        \\
        &\le
        2C_{\mathrm{st},b}(1+\Upsilon_\ast)
        \frac{r_0}{\Gamma_0^{1+\gamma}}
        \le
        1.
\end{split}
\]

Convergence is now the verbatim to the proof of \Cref{th:pfaffextension}: Interpolating
\eqref{eq:homeopfaff-increment-C0} and \eqref{eq:homeopfaff-increment-C1}, and
repeating the computation leading to \eqref{eq:thpfaffext:Calpha-summability},
gives
\[
\begin{split}
        \|F_{k+1}-F_k\|_{C^\alpha(\B^n)}
        &\le
        C(1+\Upsilon_\ast)
        \left(
        \frac{r_k}{M_k\Gamma_k}
        +
        r_k^{1-\alpha}
        M_k^{(s+1)\alpha-1}
        \Gamma_k^{2\alpha-1}
        \right)
        \\
        &\le
        C(1+\Upsilon_\ast)
        \left(
        \frac{r_k}{M_k\Gamma_k}
        +
        (r_0\Gamma_0^\sigma)^{1-\alpha}
        \Gamma_k^{-\tau_\alpha}
        \right).
\end{split}
\]
Here we used \eqref{eq:homeopfaff-rk-Gammak-relation} and
\eqref{eq:homeopfaff-tau-alpha-choice}. By
\eqref{eq:homeopfaff-C0-summability} and
\eqref{eq:homeopfaff-alpha-ratio-summability},
\begin{equation}\label{eq:homeopfaff-Calpha-summability}
        \sum_{k=0}^{\infty}
        \|F_{k+1}-F_k\|_{C^\alpha(\B^n)}
        <\infty.
\end{equation}
Thus $F_k$ converges in $C^\alpha(\B^n;\R^N)$ to a map
\[
        F\in C^\alpha(\overline{\B^n};\R^N).
\]

Also, by \eqref{eq:homeopfaff-induction-residual} and $r_k\to0$,
\[
        \|F_k^\ast\lambda\|_{L^\infty(\B^n)}\to0.
\]
Since $\alpha>1/2$, \Cref{la:distrpullbackvanishes} gives
\[
        F^\ast\lambda=0
\]
in the distributional sense in the open set $\B^n$.

As for $C^\beta$-smallness, by the interpolation (with constant $\Sigma_\beta$) estimate,
using \eqref{eq:homeopfaff-increment-C0} and
\eqref{eq:homeopfaff-increment-C1}, we have
\begin{equation}\label{eq:homeopfaffCbetainc}
\begin{split}
        \|F_{k+1}-F_k\|_{C^\beta(\B^n)}
        \le
        \Sigma_\beta(1+\Upsilon_\ast)
        \left(
        \frac{r_k}{M_k\Gamma_k}
        +
        r_k^{1-\beta}
        M_k^{(s+1)\beta-1}
        \Gamma_k^{2\beta-1}
        \right).
\end{split}
\end{equation}
As in \eqref{eq:thpfaffext:Cbeta-second-term}, using
\eqref{eq:homeopfaff-rk-Gammak-relation} and
\eqref{eq:homeopfaff-tau-beta-choice}, the second term satisfies
\[
\begin{split}
        r_k^{1-\beta}
        M_k^{(s+1)\beta-1}
        \Gamma_k^{2\beta-1}
        &\le
        (r_0\Gamma_0^\sigma)^{1-\beta}
        \Gamma_k^{-\tau_\beta}.
\end{split}
\]
Therefore, by \eqref{eq:homeopfaff-C0-summability},
\eqref{eq:homeopfaff-beta-ratio-summability}, and
\eqref{eq:homeopfaff-mu-beta-choice},
\[
\begin{split}
        \sum_{k=0}^{\infty}
        \|F_{k+1}-F_k\|_{C^\beta(\B^n)}
        &\le
        \Sigma_\beta(1+\Upsilon_\ast)
        \left(
        2\frac{r_0}{\Gamma_0^{1+\gamma}}
        +
        2(r_0\Gamma_0^\sigma)^{1-\beta}
        \Gamma_0^{-\tau_\beta}
        \right)
        \\
        &=
        \Sigma_\beta(1+\Upsilon_\ast)
        \left(
        2\frac{r_0}{\Gamma_0^{1+\gamma}}
        +
        2r_0^{1-\beta}\Gamma_0^{-\mu_\beta}
        \right)
        <
        \varepsilon.
\end{split}
\]
The last inequality is \eqref{eq:homeopfaff-Gamma0-initial}. Since
$F_k\to F$ in $C^\beta(\B^n;\R^N)$, this gives
\[
        \|F-F_0\|_{C^\beta(\B^n;\R^N)}<\varepsilon.
\]

Lastly, \eqref{eq:homeopfaff-unchangedcoordinates} follows by passing to the uniform limit in
\eqref{eq:homeopfaff-induction-frozen}:
\[
        F^j=F_0^j
        \qquad\text{for every }j=2b+2,\ldots,N.
\]
The proof is complete.
\end{proof}

\section{Applications to the Heisenberg group: Proof of Corollaries~\ref{co:Heisenbergextension} and \ref{cor:heisenberghomeov1}}
\label{s:applicationstoheisenberg}

First we bring the Heisenberg contact form $\omega_{H_n}$ into the standard form $\lambda$ from \eqref{eq:pfaffcr}.

\begin{lemma}[Normalizing the Heisenberg contact form]
\label{la:heisenbergnormalied}
Let $\omega_{\H_n} \in C^\infty(\Ep^1 \R^{2n+1})$ from \eqref{eq:Hcontactform}.
Let us write it as
\[
        \omega_{\H_n}
        =
        dt+\frac{1}{2}\sum_{j=1}^n\bigl(x_j\,dy_j-y_j\,dx_j\bigr),
\]
be the Heisenberg contact form with coordinates $(x_1,y_1,\ldots,x_n,y_n,t) \in \R^n$

Let $\lambda$ be from \eqref{eq:pfaffcr}, i.e.
\[
        \lambda = dp^1+\sum_{j=1}^n p^{2j}\,dp^{2j+1}
\]
be the normalized Pfaff form on another copy of $\R^{2n+1}$ with coordinates
$p:=(p^1,\ldots,p^{2n+1})$.

Define $\Phi:\R^{2n+1}_{x,y,t}\to\R^{2n+1}_{p}$
\begin{equation}\label{eq:heisenberg-normalizing-coordinates}
        p^1 := \Phi^1(x,y,z)
        =
        t-\frac{1}{2}\sum_{j=1}^n x_jy_j,
        \qquad
        p^{2j} := \Phi^{2j}(x,y,z)
        =
        x_j,
        \qquad
        p^{2j+1}=\Phi^{2j+1}(x,y,z)
        =
        y_j,
\end{equation}
where $j=1,\ldots,n$. Moreover, set $\Psi:\R^{2n+1}_{p}\to \R^{2n+1}_{x,y,t}$
\begin{equation}\label{eq:heisenberg-normalizing-coordinates-inverse}
        x_j=\Psi^{2j-1}(p) :=p^{2j},
        \qquad
        y_j=\Psi^{2j}(p) :=p^{2j+1},
        \qquad
        t=\Psi^{2n+1}(p) := p^1+\frac12\sum_{j=1}^n p^{2j}p^{2j+1}.
\end{equation}
where $j=1,\ldots,n$.

Then $\Phi, \Psi: \R^{2n+1} \to \R^{2n+1}$ are global $C^\infty$ diffeomorphism, inverse to each other, $\Phi \circ \Psi(p) = p$, $\Psi\circ \Phi(x,y,t) = (x,y,t)$.

Moreover,
\begin{equation}\label{eq:heisenberg-contact-normal-form}
        \Phi^\ast\lambda=\omega_{\H_n}, \quad \text{and} \quad \Psi^\ast \omega_{\H_n} = \lambda.
\end{equation}
\end{lemma}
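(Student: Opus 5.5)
The lemma is a direct computation, and I will carry it out in three steps: check that $\Phi$ and $\Psi$ are mutually inverse, compute $\Phi^\ast\lambda$, and deduce $\Psi^\ast\omega_{\H_n}=\lambda$ from the first two.

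\emph{Inverses.} Both maps are polynomial, hence $C^\infty$ on all of $\R^{2n+1}$. Substituting \eqref{eq:heisenberg-normalizing-coordinates} into \eqref{eq:heisenberg-normalizing-coordinates-inverse} gives
\[
\Psi(\Phi(x,y,t)) = \Bigl(x,\,y,\,t-\tfrac12\sum_j x_jy_j+\tfrac12\sum_j x_jy_j\Bigr)=(x,y,t).
\]
In the other order,
\[
\Phi(\Psi(p))^1 = p^1+\tfrac12\sum_j p^{2j}p^{2j+1}-\tfrac12\sum_j p^{2j}p^{2j+1}=p^1,
\]
while the coordinates $2j$ and $2j+1$ are returned unchanged. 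So $\Phi$ and $\Psi$ are mutually inverse global smooth maps, hence global diffeomorphisms. The only care needed is index bookkeeping: $x_j$ is the $(2j-1)$-st slot of $(x_1,y_1,\ldots,x_n,y_n,t)$ and the $2j$-th slot in $p$-coordinates.

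\emph{Pullback.} By naturality, $\Phi^\ast\lambda = d(\Phi^1)+\sum_j \Phi^{2j}\,d\Phi^{2j+1}$. With $\Phi^1=t-\tfrac12\sum_j x_jy_j$ this is
\[
\Phi^\ast\lambda = dt-\tfrac12\sum_j (y_j\,dx_j + x_j\,dy_j) + \sum_j x_j\,dy_j = dt+\tfrac12\sum_j (x_j\,dy_j - y_j\,dx_j)=\omega_{\H_n}.
\]

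\emph{The inverse statement.} Since $\Phi\circ\Psi=\mathrm{id}$,
\[
\Psi^\ast\omega_{\H_n}=\Psi^\ast\Phi^\ast\lambda=(\Phi\circ\Psi)^\ast\lambda=\lambda.
\]
As a check, a direct computation gives the same result: $d\bigl(p^1+\tfrac12\sum_j p^{2j}p^{2j+1}\bigr)+\tfrac12\sum_j\bigl(p^{2j}dp^{2j+1}-p^{2j+1}dp^{2j}\bigr)=dp^1+\sum_j p^{2j}dp^{2j+1}$.
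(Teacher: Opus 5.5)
Your proof is correct and takes the same approach as the paper: a direct computation. You check that $\Phi$ and $\Psi$ are mutually inverse polynomial maps by substitution, compute $\Phi^\ast\lambda=\omega_{\H_n}$ by naturality of the pullback, and deduce $\Psi^\ast\omega_{\H_n}=\lambda$ from $\Phi\circ\Psi=\mathrm{id}$, with a direct cross-check.
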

%

\begin{proof}[Proof of \Cref{co:Heisenbergextension}]
Let $\omega_{\H_n} \in C^\infty(\Ep^1 \R^{2n+1})$ be the Heisenberg contact form from \eqref{eq:Hcontactform}, and $\lambda\in C^\infty(\Ep^1 \R^{2n+1})$ the normalized form \eqref{eq:pfaffcr}. 

By \Cref{la:heisenbergnormalied}, there are global diffeomorphisms
$\Phi,\Psi:\R^{2n+1}\to\R^{2n+1}$, inverse to each other, such that
\[
        \Phi^\ast\lambda=\omega_{\H_n},
        \qquad
        \Psi^\ast\omega_{\H_n}=\lambda.
\]
Set
\[
        g:=\Phi\circ f
        \in C^\infty(\partial\B^k;\R^{2n+1}).
\]
Since $f$ is smooth and
$f\in\lip(\partial\B^k;\H_n)$, we have
$f^\ast\omega_{\H_n}=0$.  Hence
\[
        g^\ast\lambda
        =(\Phi\circ f)^\ast\lambda
        =f^\ast(\Phi^\ast\lambda)
        =f^\ast\omega_{\H_n}
        =0.
\]

Choose any smooth extension
\[
        G_0\in C^\infty(\overline{\B^k};\R^{2n+1}), \quad G_0 \Big |_{\partial \B^k} = g.
\]
By \eqref{eq:heisenberghomeo-alpha} we can apply \Cref{th:pfaffextension} with
\[
        N=2n+1,
        \qquad
        d=n,
        \qquad
        \lambda,
        \qquad
        \text{domain dimension } k,
        \qquad
        F_0=G_0 .
\]
and find $G\in C^\alpha(\overline{\B^k};\R^{2n+1})$ with $G \Big |_{\partial \B^{k}} = g$ and
$G^\ast\lambda=0$ in the distributional sense on $\overline{\B^k}$. Set $F := \Psi \circ G \equiv \Phi^{-1} \circ G \in C^\alpha(\overline{\B^k},\R^{2n+1})$. By  \Cref{la:diffeodistributionalpullback}
\[
 F^\ast \omega_{\H^n} = G^\ast \lambda = 0
\]
again in distributional sense in $\overline{\B^k}$.

By \cite[Theorem 7.4]{HajlaszMirraSchikorra}, a Euclidean
$C^\alpha$ map with $\alpha>1/2$ and vanishing distributional pullback of the
Heisenberg contact form is $C^\alpha$ as a map into $\mathbb H_n$. Therefore
\[
        F\in C^\alpha(\overline{\B^k};\mathbb H_n).
\]
This proves the extension statement.
\end{proof}

\Cref{cor:heisenberghomeov1} follows from
\begin{corollary}[H\"older embedding into $\mathbb H_n$]
\label{cor:heisenberghomeov2}
Let $n\ge3$ and set
\[
        b_n:=\left\lfloor\frac{n-1}{2}\right\rfloor,
        \qquad
        s_n:=\left\lceil\frac{n+2}{b_n}\right\rceil .
\]
Then, for every
\[
        \frac12<\alpha<\frac{s_n+1}{2s_n+1},
\]
there exists an embedding
\[
        F:\overline{\B^{n+1}}\to F(\overline{\B^{n+1}})\subset \mathbb H_n
\]
such that
\[
        F\in C^\alpha(\overline{\B^{n+1}};\R^{2n+1})
        \qquad\text{and}\qquad
        F\in C^\alpha_{\loc}(\B^{n+1};\mathbb H_n).
\]

In particular, the following dimensions and ranges are obtained:
\[
\begin{array}{c|c|c|c}
        n & b_n & s_n & \text{admissible range} \\
        \hline
        3 & 1 & 5 & \frac12<\alpha<\frac6{11} \\
        4 & 1 & 6 & \frac12<\alpha<\frac7{13} \\
        5 & 2 & 4 & \frac12<\alpha<\frac59 \\
        6 & 2 & 4 & \frac12<\alpha<\frac59 \\
        7 & 3 & 3 & \frac12<\alpha<\frac47 \\
        8 & 3 & 4 & \frac12<\alpha<\frac59 \\
        n\ge9 & \left\lfloor\frac{n-1}{2}\right\rfloor & 3
        & \frac12<\alpha<\frac47 .
\end{array}
\]
\end{corollary}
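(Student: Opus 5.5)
The plan is to apply \Cref{th:homeopfaffapproxaslkd} in the normalized coordinates of \Cref{la:heisenbergnormalied}. The frozen coordinates will carry a linear embedding of the ball, which makes the output injective for free. We work with $N=2n+1$, $d=n$, domain dimension $n+1$, and $b:=b_n=\lfloor\frac{n-1}{2}\rfloor$. Since $n\ge 3$ we have $1\le b\le d$, and the exponent $s$ in \Cref{th:homeopfaffapproxaslkd} becomes $\lceil\frac{(n+1)+1}{b}\rceil=s_n$. The key count is that the number of preserved coordinates $p^{2b+2},\ldots,p^{2n+1}$ is $2n-2b\ge 2n-(n-1)=n+1$, the dimension of the domain.

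\emph{Step 1 (choice of $F_0$).} Work on the larger ball $2\B^{n+1}$; by rescaling, \Cref{th:homeopfaffapproxaslkd} applies there verbatim. Define $F_0\in C^\infty(\overline{2\B^{n+1}};\R^{2n+1})$ by $F_0^{2b+1+i}(x):=x_i$ for $i=1,\ldots,n+1$, and set all other coordinates to zero. This uses $2b+1+(n+1)\le 2n+1$.

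\emph{Step 2 (making $F_0$ horizontal).} Apply \Cref{th:homeopfaffapproxaslkd} with $\Omega:=\frac32\B^{n+1}$, the given $\alpha$, some $\beta<\frac12$ and $\eps=1$. This produces $G\in C^\alpha(2\B^{n+1};\R^{2n+1})$ with $G^\ast\lambda=0$ in $\mathcal D'(\Omega)$. It also gives $G^\ell=F_0^\ell$ for $\ell\ge 2b+2$, and $\|G-F_0\|_{L^\infty}<1$, so $G$ is bounded. The coordinates $(G^{2b+2},\ldots,G^{2b+2+n})=x$ already separate points, so $G$ is injective. A continuous injection of the compact set $\overline{\B^{n+1}}$ into a Hausdorff space is a homeomorphism onto its image, so $G|_{\overline{\B^{n+1}}}$ is an embedding.

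\emph{Step 3 (transfer to $\H_n$).} Set $F:=\Psi\circ G$ with $\Psi$ from \Cref{la:heisenbergnormalied}. Since $\Psi$ is a global diffeomorphism, $F$ is again an embedding. Since $\Psi$ is smooth and $G$ takes values in a bounded set, $F\in C^\alpha(\overline{\B^{n+1}};\R^{2n+1})$. By \Cref{la:diffeodistributionalpullback} together with $\Psi^\ast\omega_{\H_n}=\lambda$, we get $F^\ast\omega_{\H_n}=G^\ast\lambda=0$ distributionally on $\Omega\supset\overline{\B^{n+1}}$. Finally, \cite[Theorem 7.4]{HajlaszMirraSchikorra}, invoked exactly as in the proof of \Cref{co:Heisenbergextension}, upgrades Euclidean $C^\alpha$ regularity with vanishing distributional contact pullback to $C^\alpha$ into $\H_n$ on compact subsets, and in fact on all of $\overline{\B^{n+1}}$ because $\Omega$ contains it. The table of ranges is then simple arithmetic for $b_n$ and $s_n$.

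The main obstacle is the dimension bookkeeping that reconciles two requirements. First, $b$ must be small enough that $2n-2b\ge n+1$ coordinates stay frozen, which is what yields injectivity. Second, $b\ge1$ is needed for the Nash iteration to have any block to corrugate. This is exactly where $n\ge3$ enters. Note also that \Cref{th:homeopfaffapproxaslkd} is only stated on interior sets $\Omega$; the enlargement of the domain to $2\B^{n+1}$ is what handles the closed ball.
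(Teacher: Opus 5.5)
Your proposal is correct and follows the paper's proof: you freeze the coordinates $p^{2b_n+2},\ldots,p^{2b_n+n+2}$ to carry $x$, apply \Cref{th:homeopfaffapproxaslkd} with $b=b_n$ (so that $s=s_n$), compose with $\Psi$, and invoke \cite[Theorem 7.4]{HajlaszMirraSchikorra}. The only differences are cosmetic: you enlarge the domain to $2\B^{n+1}$ where the paper restricts to $\frac12\B^{n+1}$ and rescales, and you get the homeomorphism property from compactness where the paper observes that the inverse is the restriction of a coordinate projection.
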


\begin{proof}
Let $\omega_{\H_n} \in C^\infty(\Ep^1 \R^{2n+1})$ be the Heisenberg contact form from \eqref{eq:Hcontactform}, and $\lambda\in C^\infty(\Ep^1 \R^{2n+1})$ the normalized form \eqref{eq:pfaffcr}. By
\Cref{la:heisenbergnormalied}, there are global diffeomorphism $\Phi, \Psi: \R^{2n+1} \to \R^{2n+1}$ inverse to each other with
\[
        \Phi^\ast\omega_{\H_n}=\lambda, \quad \Psi^\ast \lambda = \omega_{\H_n}.
        \]

Choose any smooth initial map $G_0:\overline{\B^{n+1}}\to\R^{2n+1}$ such that
\[
        G_0^{2b_n+1+i}(x)=x^i,
        \qquad
        i=1,\ldots,n+1,
\]
This is possible since $2b_n+n+2\leq 2n+1$, i.e. $b_n \leq \frac{n-1}{2}$, and $b_n \geq 1$ since $n \geq 3$.

Apply \Cref{th:homeopfaffapproxaslkd} to $G_0$ -- observe that $d=n$, $N=2n+1$, but ``$n$'' is our $n+1$, so
\[
 b := \min\left \{n,\left \lfloor \frac{2n+1-(n+1)-1}{2} \right \rfloor \right \}=b_n \geq 1
\]
Thus, we find a map, $G \in C^\alpha(\B^{n+1},\R^{2n+1})$,
\[
 G^\ast \lambda = 0 \quad \text{in $\mathcal{D}'(\frac{1}{2} \B^{n+1})$}
\]
Moreover, by \eqref{eq:homeopfaff-unchangedcoordinates}
\[
        G^{2b_n+1+i}(x)=G_0^{2b_n+1+i}(x)=x^i
        \qquad
        i=1,\ldots,n+1,
\]
Thus $G: \frac{1}{2} \B^{n+1} \to \R^{2n+1}$ is injective. Moreover,
\[
        G^{-1}(p)=\bigl(p^{2b_n+2},p^{2b_n+3},\ldots,p^{2b_n+n+2}\bigr) \quad \forall p = (p^1,\ldots,p^{2n+1}) \in G(\B^{n+1})
\]
Hence $G^{-1}: G(\B^{n+1}) \to \R^{n+1}$ is the restriction of a smooth map to $G(\B^{n+1})$ -- in particular $G: \B^{n+1} \to G(\B^{n+1})$ is a homeomorphism.

Now define $F:=\Psi\circ G$. Since $\Psi$ is a diffeomorphism, $F$ is a homeomorphism of
$\overline{\B^{n+1}}$ onto its image. Also $F\in C^\alpha(\overline{\B^{n+1}};\R^{2n+1})$. By \Cref{la:diffeodistributionalpullback} $F^\ast \lambda = 0$ $\frac{1}{2} \B^{n+1}$, in distributional sense.

Since $\alpha>1/2$, the
characterization of H\"older maps into the Heisenberg group by the
distributional contact equation, \cite[Theorem 7.4]{HajlaszMirraSchikorra},
implies $F\in C^\alpha(\frac{1}{2} \B^{n+1};\mathbb H_n)$. A simple rescaling then also finds an embedding $\B^{n+1} \to \H_n$.

It remains only to compute the displayed ranges. If $n=2q+1$, then
\[
        b_n=q,
        \qquad
        s_n=\left\lceil\frac{2q+3}{q}\right\rceil .
\]
Thus $s_n=5$ for $n=3$, $s_n=4$ for $n=5$, and $s_n=3$ for every odd
$n\ge7$. If $n=2q$, then
\[
        b_n=q-1,
        \qquad
        s_n=\left\lceil\frac{2q+2}{q-1}\right\rceil .
\]
Thus $s_n=6$ for $n=4$, $s_n=4$ for $n=6,8$, and $s_n=3$ for every even
$n\ge10$. We can conclude.
\end{proof}

\appendix
\section{The case $n=1$}

A natural approach to disproving Gromov's conjecture, \Cref{conj:gromov},
for $n=1$ is to adapt the corrugation profiles in \Cref{la:profile} so that
injectivity is preserved. Convex integration constructions with control of
injectivity are available, for example in \cite{FMCO18}, although the
laminate-based methods used there do not seem directly applicable to the
Pfaffian equation considered here. We explored this approach using ChatGPT Pro. After several unsuccessful attempts, on September 5, 2026, the author's
instance of GPT-6 Astra produced a candidate argument for such an
adaptation.

The proposed argument combines the main idea of this paper with techniques
from more recent works on convex integration, such as \cite{BDLS2013Transport,CaoHirschInauen2025,DeLellisInauenSzekelyhidi2018}

At the time of writing, the argument remains unverified. In collaboration
with P.~Haj\l{}asz, the author is examining, verifying, and improving its details; the current version
of the proposed argument is available in \cite{AstraHajlaszSchikorra}.

\bibliographystyle{abbrv}
\bibliography{bib}

\end{document}